\newcommand{\1}{1\!\!{\sf I}}
\theoremstyle{theorem}
\newtheorem{theorem}{Theorem}[section]
\newtheorem{proposition}[theorem]{Proposition}
\newtheorem*{proposition*}{Proposition}
\newtheorem{cor}[theorem]{Corollary}
\newtheorem*{theorem*}{Theorem}
\newtheorem{lemma}[theorem]{Lemma}
\newcommand*{\ensembledenombres}{\mathbb}
\newcommand*{\R}{\ensembledenombres{R}}
\newcommand*{\C}{\ensembledenombres{C}}
\newcommand*{\tr}{\mathrm{tr}}
\newcommand*{\A}{\mathcal{A}}
\newcommand*{\Tr}{\mathrm{Tr}}
\newcommand*{\G}{\mathbf{G}}
\newcommand*{\K}{\mathrm{K}}
\newcommand*{\g}{\mathbf{g}}
\newcommand*{\bfK}{\mathrm{K}}
\newcommand*{\Var}{\mathrm{Var}}
\newcommand*{\Cov}{\mathrm{Cov}}
\newcommand*{\wtbfG}{\widetilde{\mathbf{G}}}
\newcommand*{\wtG}{\widetilde{G}}
\newcommand*{\hbfG}{\hat{\mathbf{G}}}
\newcommand*{\hG}{\hat{G}}
\newcommand*{\wtcG}{\widetilde{\mathcal{G}}}
\newcommand*{\hcG}{\hat{\mathcal{G}}}
\newcommand*{\cG}{\mathcal{G}}
\newcommand*{\cK}{\mathcal{K}}
\newcommand*{\E}{\mathbb{E}}
\newcommand*{\hcK}{\hat{\mathcal{K}}}
\title{Strong Convergence of Multiplicative Brownian Motions on the General Linear Group}
\newcounter{author}
\renewcommand*\author[1]{%
  \stepcounter{author}%
  \ifnum\c@author=1
    \gdef\@author{#1}%
  \else
    \xdef\@author{\unexpanded\expandafter{\@author\and#1}}%
  \fi
  \csgdef{author@\the\c@author}{#1}}
\newcommand*\email[1]{%
  \csgdef{email@\the\c@author}{#1}}
\newcommand*\address[1]{%
  \csgdef{address@\the\c@author}{#1}}
  \xdef\author@count{\the\c@author}%
\newcommand*\print@authors{%
  \ifnum\c@author>\author@count
  \else
    \print@author{\the\c@author}%
    \advance\c@author by 1
    \expandafter\print@authors
  \fi}
\newcommand*\print@author[1]{%
  \par\medskip
  \begin{tabular}{@{}l@{}}%
    \textsc{\csuse{author@#1}}\\
    \csuse{address@#1}\\
    \textit{E-mail address}:
    \href{mailto:\csuse{email@#1}}{\csuse{email@#1}}
  \end{tabular}}
\date{
    \today
}
\author{Marwa Banna}
\address{New York University Abu Dhabi, Division of Science, Mathematics, Abu Dhabi, UAE.}
\email{marwa.banna@nyu.edu}
\author{Mireille Capitaine}
\address{Institut de Math\'ematiques de Toulouse, UMR5219, Universit\'e de Toulouse, CNRS, France.}
\email{mireille.capitaine@math.univ-toulouse.fr}
\author{Guillaume C\'ebron}
\address{Institut de Math\'ematiques de Toulouse, UMR5219, Universit\'e de Toulouse, France.}
\email{guillaume.cebron@math.univ-toulouse.fr}
\providecommand{\keywords}[1]
{
  \small	
  \textit{Keywords:} #1
} 
\providecommand{\subjclass}[1]
{
  \small	
  \textit{2000 Mathematics Subject Classification:} #1
}
\begin{document}

\maketitle

\begin{abstract}We consider the family of multiplicative Brownian motions $G_{\lambda,\tau}$ on the general linear group introduced by Driver-Hall-Kemp. They are parametrized by the real variance $\lambda\in \mathbb{R}$ and the complex covariance $\tau \in \mathbb{C}$ of the underlying elliptic Brownian motion. We show the almost sure strong convergence of the finite-dimensional marginals of $G_{\lambda,\tau}$ to the corresponding free multiplicative Brownian motion introduced by Hall-Ho: as the dimension tends to infinity, not only does the noncommutative distribution converge almost surely, but the operator norm does as well. This result generalizes the work of Collins-Dahlqvist-Kemp for the special case $(\lambda,\tau)=(1,0)$ which corresponds to the Brownian motion on the unitary group. Actually, this strong convergence remains valid when the family of multiplicative Brownian motions $G_{\lambda,\tau}$ is considered alongside a family of strongly converging deterministic matrices. 
\end{abstract}
\keywords{multiplicative Brownian motion, general linear group, strong convergence, limit theorems, free probability, free multiplicative Brownian motion}
\\ \subjclass{60B10, 60B20, 46L54, 22E30}
 \section{Introduction}

\paragraph*{Additive and multiplicative Brownian motions.} The \emph{Ginibre ensemble} consists of random matrices with independent and identically distributed complex Gaussian entries, each with variance~$1/N$, and the  \emph{Gaussian Unitary Ensemble} (GUE) is obtained by taking the Hermitian part of the Ginibre ensemble. 

In this paper, we are concerned with \emph{multiplicative} versions of these random matrices. More precisely, if we endow the set of Hermitian matrices with the inner product $\langle A,B\rangle_N= N\Re ( \Tr(A^*B))$, a matrix $H$ from the GUE can be seen as the end point $H_1$ of a Hermitian Brownian motion $(H_t)_{t\geq 0}$ starting at $H_0=0$. This stochastic process has additive increments $H_t-H_s$ (for $s<t$) which are stationary and stochastically independent. It is natural to convert those additive increment into multiplicative ones via the  \emph{rolling map}~\cite{mckean2024stochastic}, also known as the \emph{stochastic exponential map}~\cite{hakim2006exponentielle}: from $(H_t)_{t\geq 0}$, we define the stochastic process $(U_t)_{t\geq 0}$ via the Stratonovitch stochastic differential equation:
$$dU_t=U_t\circ i \, dH_t,\ \ \ \ U_0=I_N.$$
This procedure is the usual one to define a Brownian motion in a Lie group from a Brownian motion in its Lie algebra. Here, it yields a Brownian motion  $(U_t)_{t\geq 0}$ in the unitary group $U(N)$, with stationary and independent multiplicative increments $U_s^{-1}U_t$  (for $s<t$). Similarly, 
if we endow the set $M_N(\mathbb{C})$ of all matrices with the inner product $\langle A,B\rangle_N= N\Re ( \Tr(A^*B))$, a matrix $Z$ from the Ginibre ensemble can be seen as the end point $Z_1$ of a Brownian motion $(Z_t)_{t\geq 0}$ on $M_N(\mathbb{C})$ starting at $Z_0=0$. The stochastic exponential map yields the stochastic process $(G_t)_{t\geq 0}$ defined from $(Z_t)_{t\geq 0}$ via the Stratonovitch stochastic differential equation:
$$dG_t=G_t\circ i \, dZ_t,\ \ \ \ G_0=I_N.$$
The process $(G_t)_{t\geq 0}$ is a Brownian motion in the general linear group $GL_N(\mathbb{C})$, with stationary and independent multiplicative increments $G_s^{-1}G_t$  (for $s<t$).

\paragraph*{Large-$N$ limits.}
The famous theorem of Wigner~\cite{wigner1958distribution} states that the empirical eigenvalue distribution of the GUE converges weakly, almost surely, to the
standard semicircular distribution as $N\to\infty$. In the early~$90$'s, a deep relation between \emph{random matrices} and \emph{free probability} was
found by Voiculescu~\cite{voiculescu1991limit}: a sequence
of tuples of independent random matrices $H^{(1)},\ldots,H^{(p)}$ sampled from the GUE is \emph{asymptotically free}. In particular, the empirical eigenvalue distribution of any self-adjoint polynomial $P(H^{(1)},\ldots,H^{(p)})$ converges weakly,  almost surely,
to a probability measure as $N\to\infty$. Note that asymptotic freeness holds more generally for
independent random matrices which are unitarily invariant~\cite{voiculescu1992free}. In particular, a sequence
of tuples of independent random matrices $Z^{(1)},\ldots,Z^{(p)}$ sampled from the Ginibre ensemble is also \emph{asymptotically free}  as $N\to\infty$.
Let $(H_t
; t \in [0;1])$ be a Brownian motion taking values in the space 
of Hermitian $N \times N $ matrices normalized such that $\mathbb{E}\left((H_t)_{ij}(H_t)_{kl}\right)=\frac{t}{N} \delta_{il} \delta_{jk}$. Asymptotic freeness yields that a large-$N$ limit of this Brownian motion is provided by Voiculescu's free Brownian motion defined in Section \ref{Section: Notation} below. Namely, let $(A, \varphi)$ be a noncommutative probability space and $(s_t; t \in [0;1])$ be a free Brownian
motion in $(A, \varphi)$, then for all choices of $t_1, \cdots , t_n \in [0, 1]$, one has
\[
\tr_N\left( H_{t_1}
\dots H_{t_n}\right)
\xrightarrow[N\rightarrow +\infty]{} \varphi(s_{t_1}\dots  s_{t_n}),\]
almost surely, where $\tr_N$ denotes the normalized trace $\frac{1}{N}
\Tr$. 

In 1997, Biane~\cite{biane1997free} suggested to study the limit of the processes $(U_t)_{t\geq 0}$ and $(G_t)_{t\geq 0}$ as $N\to \infty$. He proved that $(U_t)_{t\geq 0}$ converges in $*$-distribution to a limiting process $(u_t)_{t\geq 0}$ called the \emph{free unitary Brownian motion}, and conjectured  that $(G_t)_{t\geq 0}$ converges as well to a certain process called the \emph{free multiplicative Brownian motion}. Note that the convergence of $(U_t)_{t\geq 0}$ as $N\to \infty$ has been proved rigorously at the same time in two other works by Rains~\cite{rains1997combinatorial} and Xu~\cite{xu1997random}. This fact has been known to physicists before, in connection to gauge theory~\cite{gopakumar1995mastering,kazakov1980non,singer1995master}, and the two-dimensional Yang-Mills theory with structure group $U(N)$ has been intensively studied since then from a mathematical point of view \cite{anshelevich2012quantum,cebron2017generalized,dahlqvist2016free,driver2017makeenko,driver2017three,levy2017master,levy2020two,sengupta2008traces}. The process $(G_t)_{t\geq 0}$ is also directly related to Yang-Mills theory~\cite{lohmayer2008possible}. In fact, the processes $(U_t)_{t\geq 0}$ and $(G_t)_{t\geq 0}$ are linked together via the Segal-Bargmann-Hall transform~\cite{hall1994segal}, a useful tool in two-dimensional Yang-Mills theory~\cite{albeveriosegal,driver1999yang,hall2001coherent}. Moreover, the limit of the Segal-Bargmann-Hall transform on $N\times N$ matrices as $N\to \infty$ is also a fruitful and productive area of research~\cite{biane1997segal,cebron2013free,cebron2018segal,chan2020segal,driver2013large,ho2016two}.

Concerning the large-$N$ limit of the Brownian motion $(G_t)_{t\geq 0}$, the independence, stationarity, and unitary invariance of its left multiplicative increments, together with Voiculescu's asymptotic freeness, reduce the study of the convergence of the entire process to the convergence in $*$-distribution of a marginal $G_t$, for any fixed time $t\geq 0$. This convergence was proved in 2013 by the third author in~\cite{cebron2013free}, solving the conjecture of Biane. Independently, Kemp~\cite{kemp2016large} gave a unified proof of both convergences in $*$-distribution of $(U_t)_{t\geq 0}$ and $(G_t)_{t\geq 0}$ by studying a two-parameter family of Brownian motions in $GL_N(\mathbb{C})$, with complementary estimates in~\cite{cebron2014fluctuations,kemp2017heat}; the two processes $(U_t)_{t\geq 0}$ and $(G_t)_{t\geq 0}$ corresponding to particular choices of the parameters. While the convergence in $*$-distribution of $(U_t)_{t\geq 0}$ implies the convergence of the empirical eigenvalue distribution of $U_t$ for any $t\geq 0$, it is not true for $(G_t)_{t\geq 0}$: since $G_t$ is not a normal operator ($G_t$ does not commute with $G_t^*$), the convergence in $*$-distribution of $(G_t)_{t\geq 0}$ does not imply the convergence of the empirical eigenvalue distribution of $G_t$. The convergence of the empirical eigenvalue distribution of $G_t$ as $N\to \infty$ is still an open problem, even if there is a natural candidate for the limiting measure. We expect the limit to be the Brown measure of the free multiplicative Brownian motion, and there has been a lot of effort to understand and describe this measure~\cite{driver2022brown,Eaknipitsari,gudowska2003infinite,hall2019brown,ho2019brown,lohmayer2008possible}. The state of art concerning the understanding of the Brown measure of the free multiplicative Brownian motion seems to be the results of Hall and Ho in~\cite{Hall-Ho-23}.

\paragraph*{The model.} In 2020, Driver, Hall and Kemp~\cite{driver2020complex} observed that one can naturally include a third parameter, obtaining a larger family of Brownian motions in $GL_N(\mathbb{C})$. Notably, this three-parameter family has been shown~\cite[Theorem 3.2]{driver2020complex} to be, up to multiplication by a scalar process, the most general class of Brownian motions in $GL_N(\mathbb{C})$ that preserves the unitary invariance of increments, a property that ensures asymptotic freeness as $N\to \infty$. In this paper, we shall study the large-$N$ limit of a Brownian motion from this three-parameter family. We now proceed with a precise definition.

Let $\theta\in \mathbb{R}$, $a\geq 0$ and $b\geq 0$ that are not both zero, and set 
$\lambda=a^2+b^2$ and $ \tau=\lambda-e^{2i\theta}(a^2-b^2)$. We denote by $(H_t)_{t\geq 0}$ and $(\tilde H_t)_{t\geq 0}$ two independent Brownian motions on the set of Hermitian $N\times N$ matrices endowed with the inner product $\langle \cdot,\cdot\rangle_N$. The \emph{elliptic Brownian motion} $(Z_{\lambda,\tau}(t))_{t\geq 0}$ on $M_N(\mathbb{C})$ is then defined by
\begin{equation}\label{R-E-BM}
    Z_{\lambda,\tau}(t)= e^{i \theta}(aH_t+ib\tilde H_t),
\end{equation}
whose law is uniquely defined by the parameters $\lambda\geq 0$ and $\tau\in \mathbb{C}$. 
Further details regarding these parameters are provided in Section \ref{Section: Notation}. The multiplicative $(\lambda,\tau)$-Brownian motion $(G_{\lambda, \tau}(t))_{t\geq 0}$ is the diffusion process on $GL_N(\mathbb{C})$ which is the solution of the  Stratonovich stochastic differential equation 
\begin{equation}\label{Stratonovich SDE}
    dG_{\lambda, \tau}(t)= G_{\lambda, \tau}(t)\circ  i \, dZ_{\lambda, \tau}(t)  \quad \text{with}\quad G_{\lambda, \tau}(0)=I_N.
\end{equation}
It  is well-defined for all $\lambda\geq 0$ and $\tau\in \mathbb{C}$ such that $|\tau-\lambda|\leq \lambda$. The case $(\lambda,\tau)=(1,0)$ yields the Brownian motion $(U_t)_{t\geq 0}$, while the case $(\lambda,\tau)=(1,1)$ corresponds to the Brownian motion $(G_t)_{t\geq 0}$. In both cases, the convergence in $*$-distribution as $N\to \infty$ has been established~\cite{biane1997free,cebron2013free}. More generally, the convergence in $*$-distribution of $(G_{\lambda, \tau}(t))_{t\geq 0}$  is proved in~\cite{kemp2016large} when $\tau$ is real. The proofs of~\cite{cebron2013free} and ~\cite{kemp2016large} extend  in a straightforward way for $\tau\in \mathbb{C}$, yielding the convergence in $*$-distribution of $(G_{\lambda, \tau}(t))_{t\geq 0}$ towards a  process $(g_{\lambda,\tau}(t))_{t\geq 0}$, known as the \emph{free multiplicative $(\lambda,\tau)$-Brownian motion}, which is defined in Section~\ref{Section: Notation}. For completeness, we will provide a proof of this convergence in the Appendix. 

\paragraph*{Strong convergence.} In 2005, Haagerup and Thorbj{\o}rnsen~\cite{haagerup2005new} solved a longstanding problem on $C^*$-algebras, by strengthening the asymptotic freeness of GUE matrices to a stronger form. They showed that, almost surely, as $N\to\infty$, for
any noncommutative polynomial $P$ in $p$ variables, the operator norm
$\|P(H^{(1)},\ldots,H^{(p)})\|$ converges to  $\|P(s^{(1)},\ldots,s^{(p)})\|$, where   $(s^{(1)},\ldots,s^{(p)})$ are freely independent semicircular elements. This property is referred to as strong convergence, and we say that  $(H^{(1)},\ldots,H^{(p)})$ \emph{strongly converges} to $(s^{(1)},\ldots,s^{(p)})$ as $N\to\infty$. It follows by linearity that a sequence
of tuples of independent random matrices $(Z^{(1)},\ldots,Z^{(p)})$ from the Ginibre ensemble strongly converges to $(z^{(1)},\ldots,z^{(p)})$ as $N\to\infty$, where $(z^{(1)},\ldots,z^{(p)})$ are freely independent circular variables. 
This result was  extended to the case of GOE/GSE matrices \cite{Schultz}, general Wigner matrices \cite{Anderson, capitaine2007strong}, and Haar matrices \cite{CollinsMale}. Furthermore, it remains valid when such families of random matrices are considered alongside a family of strongly converging deterministic matrices \cite{Male, CollinsMale, BelinschiCapWigner}. More recently, similar strong convergence results have been established for a broad class of Gaussian random matrix models~\cite{Bandeira}.

It is worth noticing that establishing strong convergence for sequences of matrices plays  a central role in several recent breakthrough results in particular on random graphs \cite{BordenaveCollinsANNofMath,chen2024newapproachstrongconvergenceI} and operator algebras \cite{haagerup2005new,HST, Hayes, BelinschiCap, HayesJekel, BordenaveCollinsPT, DelasalleMagee, ParraudPT,  chen2024newapproachstrongconvergenceII}.
Many results are essentially based on the so-called linearization trick introduced in  \cite{haagerup2005new} and inspired by \cite{Pisier}. Note that 
\cite{CGParraud, ParraudGUE} obtained a new proof of the result of  Haagerup and Thorbj{\o}rnsen~\cite{haagerup2005new} requiring neither the linearization trick nor the Stieltjes transform. The main idea of their proof is to interpolate GUE matrices and a free semi-circular system with the help of a free Ornstein-Uhlenbeck process. Very recently, \cite{chen2024newapproachstrongconvergenceI} found a new  approach to strong
convergence based on the $1/N$-expansion. 
\paragraph*{Results.} Concerning the multiplicative $(\lambda,\tau)$-Brownian motion $(G_{\lambda, \tau}(t))_{t\geq 0}$, the strong convergence has been shown by Collins-Dahlqvist-Kemp in the particular case $(\lambda,\tau)=(1,0)$, which corresponds to the Brownian motion on the unitary group \cite{Collins-Dahlqvist-Kemp}. Their proof uses the convergence of the support of the empirical law of eigenvalues together with the fact that $G_{1, 0}(t)$ is a normal matrix. For general parameters $(\lambda,\tau)$, even if we do have a hard edge result for the empirical law of the singular values of $G_{\lambda, \tau}(t)$ (at least when $\lambda=\tau$, see~\cite{ahn2023extremal}), it does not imply strong convergence because $G_{\lambda, \tau}(t)$ is not necessarily normal. The main result of the paper is an inclusion of  spectra involving $(\lambda,\tau)$-Brownian motion on $GL_N(\mathbb{C})$ for any allowed parameters $(\lambda,\tau)$ and deterministic matrices. More precisely, we prove that the spectrum of any self-adjoint polynomial in $(G_{\lambda, \tau}(t))_{t\geq 0}$ and deterministic matrices $A_N$ is asymptotically included in a neighborhood of the spectrum of the same polynomial in $(g_{\lambda, \tau}(t))_{t\geq 0}$ and  $A_N$, where $(g_{\lambda, \tau}(t))_{t\geq 0}$ and  $A_N$ are freely independent. In order to formulate this result, we need to consider a  $C^\ast$-probability space $(\mathcal{A}_N,\varphi_N)$ in which  $(g_{\lambda, \tau}(t))_{t\geq 0}$ and  $A_N$ are freely independent. Such a $C^\ast$-probability space always exists thanks to Theorem \ref{freeproduct}: it suffices to consider the free product of $(M_N(\mathbb{C}), \frac{1}{N}\Tr)$ with the noncommutative probability space where the free multiplicative $(\lambda,\tau)$-Brownian motion $(g_{\lambda, \tau}(t))_{t\geq 0}$ is defined. Note that in this  $C^\ast$-probability space $(\mathcal{A}_N,\varphi_N)$, the restriction of $\varphi_N$ to $M_N(\mathbb{C})$ is $\frac{1}{N}\Tr$.
 
\begin{theorem}\label{maintheo-sp}
Let $\lambda\geq 0$ and $\tau\in \mathbb{C}$ such that $|\tau-\lambda|\leq \lambda$ and let, for all $N \geq 1$,  $(G_t)_{t\geq 0}:=(G_{\lambda, \tau}(t))_{t\geq 0}$ be a multiplicative $(\lambda,\tau)$-Brownian motion on $GL_N(\mathbb{C})$.  Let $A^N:=(A_1^N,  \dots, A_r^N,A_1^{N\ast},  \dots, A_r^{N\ast})$  be a $2r$-tuple of $N \times N$ deterministic matrices that are uniformly bounded over $N$. Let $(g_t)_{t\geq 0}:=(g_{\lambda, \tau}(t))_{t\geq 0}$ be a free multiplicative $(\lambda,\tau)$-Brownian motion in $(\mathcal A, \varphi)$ as defined by \eqref{free Stoch Diff Eq} which is free from $A^N$ in $(\mathcal{A}_N,\varphi_N)$ for all $N\geq 1$.

Then, for any $\delta>0$,  $p\geq 0$,  times $t_1,\ldots,t_p\geq 0$ and   any noncommutative self-adjoint polynomial $Q$ in $4p+2r$ indeterminates,  almost surely, we have for $N$ large  enough
\begin{multline*}
sp\big(Q(G_{t_1},G_{t_1}^*,G_{t_1}^{-1},(G_{t_1}^{-1})^*,\ldots,G_{t_p},G_{t_p}^*,G_{t_p}^{-1},(G_{t_p}^{-1})^*, A^N)\big) \\ \subset sp\big(Q(g_{t_1},g_{t_1}^*,g_{t_1}^{-1},(g_{t_1}^{-1})^*,\ldots,g_{t_p},g_{t_p}^*,g_{t_p}^{-1},(g_{t_p}^{-1})^*,A^N)\big) + (-\delta, \delta).
\end{multline*}
 \end{theorem}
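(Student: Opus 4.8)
The plan is to reduce the spectral inclusion to a statement about the convergence of norms of self-adjoint polynomials, and then to establish that norm convergence via an interpolation/Ornstein--Uhlenbeck argument of the type used in \cite{CGParraud,ParraudGUE}, adapted to the multiplicative (rather than additive) setting and to the presence of inverses and deterministic matrices. Concretely, the spectral inclusion $sp(X_N)\subset sp(x) + (-\delta,\delta)$ for self-adjoint $X_N = Q(G_{t_1},\ldots,A^N)$ and its limit $x = Q(g_{t_1},\ldots,A^N)$ is equivalent, by a standard functional-calculus argument (see \cite{haagerup2005new,Collins-Dahlqvist-Kemp}), to showing that for every real polynomial (or every continuous function) $f$ supported away from $sp(x)$, the quantity $\|f(X_N)\|$ tends to $0$ almost surely; and since $f(X_N)$ is itself a limit of self-adjoint polynomials in the same generators and its limit is $f(x)=0$, everything follows once we know that
\begin{equation*}
\|P(G_{t_1},G_{t_1}^*,G_{t_1}^{-1},(G_{t_1}^{-1})^*,\ldots,A^N)\| \xrightarrow[N\to\infty]{\text{a.s.}} \|P(g_{t_1},g_{t_1}^*,g_{t_1}^{-1},(g_{t_1}^{-1})^*,\ldots,A^N)\|
\end{equation*}
for every self-adjoint noncommutative polynomial $P$. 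The lower bound $\liminf \|P(G)\|\geq \|P(g)\|$ is the soft direction: it follows from the already-known convergence in $\ast$-distribution of $(G_{\lambda,\tau}(t))_{t\geq 0}$ to $(g_{\lambda,\tau}(t))_{t\geq 0}$ (proved in \cite{cebron2013free,kemp2016large} and in the Appendix) together with the strong convergence of $A^N$ and standard weak-convergence-of-moments lower-semicontinuity of the norm.

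The hard direction is the upper bound $\limsup_N \mathbb{E}\,\|P(G_t,\ldots,A^N)\| \leq \|P(g_t,\ldots,A^N)\|$, after which almost-sure convergence is obtained by a concentration-of-measure argument (the relevant functional of the Gaussian entries defining $Z_{\lambda,\tau}$ is Lipschitz, up to controlling the inverses $G_t^{-1}$, so one gets Gaussian concentration and then Borel--Cantelli). To get the upper bound I would first rewrite each marginal $G_t = G_{\lambda,\tau}(t)$ and its inverse in terms of the Gaussian matrices $H_t,\tilde H_t$ entering the elliptic Brownian motion \eqref{R-E-BM}, and then interpolate: introduce a one-parameter family $Z^{(s)}_{\lambda,\tau}$, $s\in[0,1]$, running from the matrix model ($s=0$) to a free elliptic family ($s=1$) living in a common $C^\ast$-space, built from a matricial/free Ornstein--Uhlenbeck process, so that the corresponding solutions $G_t^{(s)}$ of \eqref{Stratonovich SDE} interpolate between the random matrix $G_t$ and the free $g_t$. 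Differentiating $s\mapsto \tau_s\big(R(s)\,\text{(smooth function of }|P(G^{(s)})|^2)\big)$ along this interpolation and using free stochastic calculus / the free Malliavin-type integration-by-parts (as in \cite{CGParraud}) produces a remainder that is $O(1/N^2)$ in the relevant traces against a fixed resolvent, and then the standard comparison of $\frac1N\Tr\, \mathrm{Im}(z-P(G^{(s)}))^{-1}$ with its free counterpart, integrated in $z$, upgrades this to the norm statement. Here the presence of the deterministic matrices $A^N$ is handled exactly as in \cite{Male,CollinsMale,ParraudPT}: they are simply spectators along the interpolation, commuting with none of the Ornstein--Uhlenbeck dynamics but requiring no extra estimate beyond uniform boundedness.

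The genuinely new difficulty, and the step I expect to be the main obstacle, is the control of the inverses $G_t^{-1}$ and $(G_t^{-1})^\ast$. Unlike the unitary case $(\lambda,\tau)=(1,0)$ treated in \cite{Collins-Dahlqvist-Kemp}, where $G_t$ is normal with spectrum on the unit circle so $\|G_t^{-1}\|=1$ automatically, here $G_t$ is a genuine element of $GL_N(\mathbb{C})$ whose smallest singular value could in principle be very small, and one must show that $\|G_t^{-1}\|$ stays bounded (in probability, with good enough tails for concentration and Borel--Cantelli) uniformly in $N$. The plan for this is to exploit the explicit SDE \eqref{Stratonovich SDE}: write the SDE satisfied by $G_t^{-1}$ (it is again of multiplicative type, $d(G_t^{-1}) = -i\,dZ_{\lambda,\tau}(t)\circ G_t^{-1} + \text{drift}$), derive from it a differential inequality for $\mathbb{E}\,\|G_t^{-1}\|^q$ or for the Laplace transform of $\log\|G_t^{-1}\|$, and conclude using the hard-edge/least-singular-value estimates available for this model (for instance \cite{ahn2023extremal} when $\lambda=\tau$, suitably extended, or a direct moment computation). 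Once a uniform bound $\|G_t^{-1}\|\leq C$ holds on an event of overwhelming probability, one restricts all the above analysis to that event, replaces the polynomial $Q$ (which involves $G_t^{-1}$) by one in bounded variables, and the interpolation machinery applies verbatim; the only care needed is that the truncation at $\|G_t^{-1}\|\leq C$ be compatible with both the Gaussian concentration step and the passage to the limit (the free element $g_t^{-1}$ is bounded since $g_t$ is invertible in the $C^\ast$-sense).
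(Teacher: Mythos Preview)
Your proposal has several genuine gaps, and the overall architecture differs substantially from the paper's.

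\textbf{The reduction is miscast.} Theorem~\ref{maintheo-sp} is a spectral inclusion with an $N$-dependent right-hand side $sp\big(Q(g_{t_1},\ldots,A^N)\big)$; it does \emph{not} assume that $A^N$ converges strongly (that hypothesis appears only in the corollary). So there is no fixed limit $x$ to which you can compare norms, and your ``soft direction'' invoking strong convergence of $A^N$ is not available. The paper does not reduce to a norm-convergence statement at all: it works directly with the smooth indicator $\psi_{N,\delta}=1-f_{N,\delta}$ supported outside a $\delta$-neighborhood of $sp\big(PP^*(g_t,A^N)\big)$, proves $\mathbb{E}\,\tr_N[\psi_{N,\delta}(PP^*(\G_t,A^N))]=O(1/N^2)$ via a resolvent estimate and the Helffer--Sj\"ostrand formula, and then upgrades to an almost-sure statement by a variance bound.

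\textbf{The control of $\|G_t^{-1}\|$ is circular.} You correctly identify the inverses as the crux, but your proposed fix---prove $\|G_t^{-1}\|\leq C$ in operator norm with overwhelming probability, then restrict to that event---is essentially a special case of the theorem you are trying to prove (take $Q=X^{-1}(X^{-1})^*$). Hard-edge results of the type in \cite{ahn2023extremal} are only known for special parameters and do not come with the tail bounds your concentration step would need. The paper avoids operator-norm control of $G_t^{-1}$ entirely: it only ever needs $L^{2q}$ bounds $\|P(\G_s)\|_{L^{2q}}\leq C$ uniform in $N$ and $s\leq t$ (Corollary~\ref{NormeLp}), which follow cheaply from the pure-trace-polynomial generator computation in the appendix. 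All resolvent remainders are then estimated by H\"older against these $L^{2q}$ norms, never against operator norms.

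\textbf{Interpolation and concentration.} Your Ornstein--Uhlenbeck interpolation at the level of the Gaussian entries $H_t,\tilde H_t$ faces the problem that the map $Z\mapsto G$ (solving the SDE) is highly nonlinear, so free Malliavin integration by parts in the $Z$-variables does not yield clean remainders. The paper interpolates \emph{multiplicatively} at the level of the $GL$-processes themselves: it embeds $\G_t$ as $I_k\otimes \G_t$ in $M_{Nk}$, interpolates with an independent $(\lambda,\tau)$-Brownian motion $\cK$ on $GL_{Nk}$ via products $(I_k\otimes\cG_{t-u})\cK_u$, and lets $k\to\infty$; the careful choice of left- versus right-invariant increments makes the drift terms cancel and leaves only quadratic-covariation remainders of order $(Nk)^{-2}$. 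Finally, almost-sure convergence is obtained not by Gaussian Lipschitz concentration (the relevant functional is not Lipschitz once $G_t^{-1}$ is involved) but by a variance estimate $\mathrm{Var}\,\tr_N[\psi_{N,\delta}(\cdot)]=O(1/N^4)$, itself proved by a second application of the same multiplicative interpolation (this is why Theorem~\ref{theo:fundamental} is stated for arbitrary products of resolvents and monomials, not just a single resolvent).
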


 As explained at the end of Section \ref{Section: idea of proof}, this allows us to deduce in particular the following strong convergence when the sequence of deterministic matrices converges strongly (for the precise definition of the strong convergence, see Section~\ref{Section: Notation}).
 \begin{cor}\label{maintheo}
 Let the setting be like above. Assume, in addition, that $A^N$ converges strongly to $\mathbf{a}$ in $(\mathcal{A},\varphi)$.  Then for any $p\geq 0$, and times $t_1,\ldots,t_p\geq 0$, almost surely for  any noncommutative polynomial $P$ in $4p+2r$ indeterminates,
\begin{multline*}
    \lim_{N\to \infty}\tr_N\left(P(G_{t_1},G_{t_1}^*,G_{t_1}^{-1},(G_{t_1}^{-1})^*,\ldots,G_{t_p},G_{t_p}^*,G_{t_p}^{-1},(G_{t_p}^{-1})^*,A^N)\right)\nonumber\\
=\varphi\left(P(g_{t_1},g_{t_1}^*,g_{t_1}^{-1},(g_{t_1}^{-1})^*,\ldots,g_{t_p},g_{t_p}^*,g_{t_p}^{-1},(g_{t_p}^{-1})^*,\mathbf{a})\right)
\end{multline*}
and
\begin{multline*}
    \lim_{N\to \infty}\left\|P(G_{t_1},G_{t_1}^*,G_{t_1}^{-1},(G_{t_1}^{-1})^*,\ldots,G_{t_p},G_{t_p}^*,G_{t_p}^{-1},(G_{t_p}^{-1})^*,A^N)\right\|\\
    =\|P(g_{t_1},g_{t_1}^*,g_{t_1}^{-1},(g_{t_1}^{-1})^*,\ldots,g_{t_p},g_{t_p}^*,g_{t_p}^{-1},(g_{t_p}^{-1})^*,\mathbf{a})\|
\end{multline*}
with $\mathbf{a}$ being freely independent of $g_t$.
 \end{cor}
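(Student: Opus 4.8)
The plan is to deduce Corollary~\ref{maintheo} from Theorem~\ref{maintheo-sp} by combining the (already available) convergence in $*$-distribution with the spectral inclusion, through the classical two-inequalities scheme for operator-norm convergence. Fix $p$ and $t_1,\dots,t_p$; for brevity let $\mathbf{G}$ denote the $4p$-tuple $(G_{t_1},G_{t_1}^*,G_{t_1}^{-1},(G_{t_1}^{-1})^*,\dots)$, let $\mathbf{g}$ denote the corresponding tuple built from the $g_t$'s, and for a noncommutative polynomial $P$ set $P_N:=P(\mathbf{G},A^N)\in M_N(\mathbb{C})$ and $\mathbf{p}:=P(\mathbf{g},\mathbf{a})\in\mathcal{A}$. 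I would first reduce the assertion ``almost surely, for all $P$'' to a countable one: Theorem~\ref{maintheo-sp}, applied to the self-adjoint polynomials $G_{t_j}G_{t_j}^*$ and $G_{t_j}^{-1}(G_{t_j}^{-1})^*$, gives almost surely $\sup_N\|G_{t_j}\|<\infty$ and $\sup_N\|G_{t_j}^{-1}\|<\infty$ (using that $g_{t_j}$ is bounded with bounded inverse in $(\mathcal{A},\varphi)$). On this event, for polynomials of bounded degree the maps $P\mapsto\tr_N(P_N)$ and $P\mapsto\|P_N\|$ are Lipschitz in the coefficients of $P$, uniformly in $N$, and likewise $P\mapsto\|\mathbf{p}\|$; so it suffices to prove the two convergences for the countable dense set of polynomials with coefficients in $\mathbb{Q}+i\mathbb{Q}$ and for rational $\delta$, and then pass to the limit in the coefficients and in $\delta$.

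For the trace convergence I would use the convergence in $*$-distribution of $(G_{\lambda,\tau}(t))_{t\ge0}$ to $(g_{\lambda,\tau}(t))_{t\ge0}$ established in the Appendix, together with the fact that the process $(G_{\lambda,\tau}(t))_{t\ge0}$ is invariant in law under conjugation by a fixed unitary matrix, which follows from the unitary invariance of the underlying Hermitian Brownian motions, cf.\ \eqref{R-E-BM}--\eqref{Stratonovich SDE}. Voiculescu's asymptotic freeness theorem for unitarily invariant ensembles~\cite{voiculescu1992free} then shows that $\mathbf{G}$ is asymptotically free from $A^N$, almost surely; combined with $A^N\to\mathbf{a}$ in $*$-distribution (part of the strong-convergence hypothesis) and the freeness of $\mathbf{g}$ from $\mathbf{a}$ in $(\mathcal{A},\varphi)$, this yields $\tr_N(P_N)\to\varphi(\mathbf{p})$ almost surely, simultaneously for all $P$.

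For the operator-norm convergence I would argue the two inequalities separately. The bound $\liminf_N\|P_N\|\ge\|\mathbf{p}\|$ follows from the trace convergence: applying it to $(P^*P)^k$ gives $\|P_N\|^{2k}\ge\tr_N\big((P_N^*P_N)^k\big)\to\varphi\big((\mathbf{p}^*\mathbf{p})^k\big)$ for every $k\ge1$, whence $\liminf_N\|P_N\|^2\ge\varphi\big((\mathbf{p}^*\mathbf{p})^k\big)^{1/k}$, and letting $k\to\infty$ (using faithfulness of $\varphi$) gives $\liminf_N\|P_N\|^2\ge\|\mathbf{p}\|^2$. For the reverse bound I would apply Theorem~\ref{maintheo-sp} to the self-adjoint polynomial $Q:=P^*P$, so that $Q(\mathbf{G},A^N)=P_N^*P_N\ge0$: almost surely, for $N$ large, $sp(P_N^*P_N)\subset sp\big(P(\mathbf{g},A^N)^*P(\mathbf{g},A^N)\big)+(-\delta,\delta)$, hence $\|P_N\|^2\le\|P(\mathbf{g},A^N)\|^2+\delta$. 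Since $A^N$ converges strongly to $\mathbf{a}$, and strong convergence is preserved when one adjoins a fixed family that is freely independent of the converging one --- here $\mathbf{g}$, which is free from $A^N$ for every $N$ and from $\mathbf{a}$ --- we get $\|P(\mathbf{g},A^N)\|\to\|P(\mathbf{g},\mathbf{a})\|=\|\mathbf{p}\|$; as $\delta>0$ is arbitrary, $\limsup_N\|P_N\|\le\|\mathbf{p}\|$. Together with the lower bound and the reduction of the first paragraph, this gives $\lim_N\|P_N\|=\|\mathbf{p}\|$, completing the proof.

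The step I expect to be the main obstacle is the last one: transferring the strong convergence of $A^N$ through the reduced free product with the (infinite-dimensional, genuinely operator-algebraic) family $\mathbf{g}$, i.e.\ the assertion that $(\mathbf{g},A^N)\to(\mathbf{g},\mathbf{a})$ strongly. Its distributional half is immediate from free independence, but the norm half is a genuine stability property of strong convergence under free products; one should either quote it from the strong-asymptotic-freeness literature (see, e.g.,~\cite{Male,CollinsMale}) or reprove it in the form needed here. The remaining ingredients --- the uniform operator-norm bounds on $\mathbf{G}$, the faithfulness of $\varphi$ on $(\mathcal{A},\varphi)$, and the passage from a countable dense family of $P$'s and of $\delta$ to all of them --- are routine.
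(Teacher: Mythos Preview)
Your proposal is correct and follows essentially the same route as the paper: reduce to rational-coefficient polynomials, obtain trace convergence via unitary invariance and asymptotic freeness, get the $\liminf$ bound from moments (the paper cites Lemma~7.2 of \cite{haagerup2005new}), and get the $\limsup$ bound by combining Theorem~\ref{maintheo-sp} with the strong convergence of $(\mathbf{g},A^N)\to(\mathbf{g},\mathbf{a})$. The one point to sharpen is the reference for the free-product stability step you correctly flag as the obstacle: the paper invokes Pisier~\cite{Pisier-StrongCon} for the fact that strong convergence of $A^N\to\mathbf{a}$ upgrades to strong convergence of $(\mathbf{g},A^N)\to(\mathbf{g},\mathbf{a})$ when $\mathbf{g}$ is free from both, whereas \cite{Male,CollinsMale} concern random-matrix asymptotic freeness and do not directly give this operator-algebraic statement.
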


To establish strong convergence results, most proofs build upon foundational ideas from~\cite{haagerup2005new}, which analyze the spectrum by obtaining precise estimates of the mean Stieltjes transform of the empirical spectral measure. However, more recent approaches bypass the linearization trick and instead rely on interpolation techniques.  

In this paper, we adopt such an interpolation-based approach, drawing on ideas developed by Collins, Guionnet, and Parraud~\cite{CGParraud} and further extended by Parraud~\cite{ParraudGUE,ParraudHaar,parraud-HaarUnitary,ParraudPT}. Another interpolation technique was introduced in~\cite{chen2024newapproachstrongconvergenceI}, but it does not apply to our setting, see Section \ref{Section: idea of proof} for more details. 
Unlike the previously mentioned interpolation techniques, our method is \emph{multiplicative} rather than additive, marking a key distinction from earlier approaches. Moreover, multiplicative interpolation requires careful choices at each step of the proof to interpolate with either a left- or right-invariant multiplicative Brownian motion, to ensure obtaining symmetric terms that can be combined and effectively controlled.
 
 To obtain almost sure convergence results, we prove an estimate on the variance that is of independent interest as well.
\begin{theorem}\label{theo: variance estimate}
Let $f$ be a  function of $C^3(\mathbb{R})$ with compact support. For any $p\in \mathbb{N}$ and $\ell=1,\dots,p$, let  $(G^{(\ell)}_{\lambda, \tau}(t))_{t\geq 0}$ be independent  $(\lambda,\tau)$-Brownian motions as defined in \eqref{Stoch Diff Eq sigma} and denote by $\G_t=(G^{(1)}_{\lambda, \tau}(t), \dots, G^{(p)}_{\lambda, \tau}(t))$. Let $A^N:=(A_1^N,  \dots, A_r^N,A_1^{N\ast},  \dots, A_r^{N\ast})$  be a $2r$-tuple of $N \times N$ deterministic matrices that are uniformly bounded over $N$. Then, for any noncommutative polynomial $P$ in $4p+2r$ variables, and any $t\geq 0$,
    \begin{equation*} 
\Var{\left[ \tr _N(f(PP^{*}(\G_t,\G_t^\ast,\G_t^{-1}, (\G_t^{-1})^\ast,A^N)))\right]}
=O\Big(\frac{1}{N^2}\Big).
\end{equation*}

\end{theorem}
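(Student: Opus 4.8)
The plan is to use the Stratonovich SDE \eqref{Stratonovich SDE} to set up a martingale representation, and then apply the Burkholder–Davis–Gundy (or simply Doob + Itô isometry) machinery together with a stochastic calculus of variations argument on the quadratic variation. First I would fix $t$ and consider, for a multiparameter-time fixed vector, the martingale
\[
M_s = \E\!\left[\tr_N\!\big(f(PP^*(\G_t,\G_t^*,\G_t^{-1},(\G_t^{-1})^*,A^N))\big)\,\big|\,\mathcal F_s\right],\qquad s\in[0,t],
\]
where $\mathcal F_s$ is the filtration generated by the driving Hermitian Brownian motions $H^{(\ell)},\tilde H^{(\ell)}$ underlying each $G^{(\ell)}_{\lambda,\tau}$. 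Then $\Var[\cdot]=\E[\langle M\rangle_t]$, so the whole estimate reduces to showing that the bracket $\langle M\rangle_t$ is $O(1/N^2)$ in expectation. The bracket is computed by Itô's formula: writing $\Phi(\G_s)=\E[\cdots|\mathcal F_s]$ as a smooth function of the matrix entries via the Markov property of the $(\lambda,\tau)$-Brownian motion, the martingale increments are driven by $\partial_{ij}\Phi$ contracted against the covariance structure of $dZ_{\lambda,\tau}$. The key point is that each derivative $\partial_{ij}$ of a trace of a polynomial word produces a ``cut'' of the word into two sub-words, i.e.\ a term of the form $\tr_N(\cdots)(\cdots)_{ji}$, and summing $|\partial_{ij}\Phi|^2$ over $i,j$ with the $\frac1N$ normalization of the covariance yields a factor $\frac1N$ times a product of two traces — hence $O(1/N^2)$ after the normalized traces are each $O(1)$.

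Concretely, the steps are: (i) record that $G_{\lambda,\tau}(s)$, $G_{\lambda,\tau}(s)^{-1}$ and their adjoints all have operator norms that are bounded in $L^q$ uniformly in $N$ for every $q$ (this follows from the SDE \eqref{Stoch Diff Eq sigma} by a standard Gronwall/moment estimate, and is presumably available from the earlier sections or the appendix); this handles integrability of all the polynomial expressions and their derivatives. (ii) Express the conditional expectation $\Phi$ via the heat semigroup generated by the $(\lambda,\tau)$-Brownian motion and use its regularity to differentiate; alternatively, run the computation directly on $g\mapsto \tr_N f(PP^*(g,\dots))$ composed with the stochastic flow, exploiting that $f\in C^3$ makes $\tr_N f(PP^*)$ a $C^3$ function of the entries with derivatives controlled by polynomials in the operator norms. (iii) Compute $d\Phi(\G_s)$ by Itô and read off the martingale part; its bracket is a finite sum (over $\ell$ and over the left/right invariant structure of the covariance, governed by $\lambda$ and $\tau$) of terms
\[
\frac1N\sum_{i,j}(\text{first cut})_{i}{}_{j}\,(\text{second cut})_{j}{}_{i},
\]
each of which, after taking $\tr_N$ twice and using step (i), contributes $O(1/N^2)$. (iv) Integrate in $s\in[0,t]$ and take expectations; the time integral only contributes a factor $t$ and the moment bounds of step (i) are uniform on $[0,t]$, so the bound survives.

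The main obstacle is step (iii): organizing the differentiation of $\tr_N f(PP^*(\G_s,\G_s^*,\G_s^{-1},(\G_s^{-1})^*,A^N))$ with respect to the matrix entries when the word contains the \emph{inverses} $G^{-1}$ and $(G^{-1})^*$. Differentiating an inverse produces $G^{-1}(\partial G)G^{-1}$, so the ``cut'' structure still holds but with extra factors of $G^{-1}$, and one must make sure these are absorbed by the $L^q$ bounds of step (i) rather than blowing up; this is exactly where the hypothesis $|\tau-\lambda|\le\lambda$ (well-posedness of the flow in $GL_N$) and the uniform-in-$N$ invertibility moment estimates are essential. A secondary technical point is that $f$ is only $C^3$, not smooth, so one cannot use a power-series/holomorphic functional calculus for $f(PP^*)$; instead I would use the Helffer–Sjöstrand formula to write $f(PP^*)=\frac1\pi\int_{\mathbb C}\bar\partial\tilde f(z)\,(z-PP^*)^{-1}\,dxdy$ with an almost-analytic extension $\tilde f$ supported near the real axis, reducing all derivative computations to those of the resolvent $(z-PP^*)^{-1}$, whose entrywise derivatives again have the cut structure and are controlled by $\|\Im z\|^{-2}$ against which $\bar\partial\tilde f$ decays because $f\in C^3$. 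Putting these together gives the stated $O(1/N^2)$.
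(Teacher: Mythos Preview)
Your approach is sound and will yield the $O(1/N^2)$ bound, but it is organized differently from the paper's argument. The paper does \emph{not} set up the conditional-expectation martingale $M_s$ and compute $\langle M\rangle_t$; instead it first applies Helffer--Sj\"ostrand (as you also propose) to reduce to estimating $\Cov[\tr_N(z_1-PP^*)^{-1},\tr_N(z_2-PP^*)^{-1}]$, and then controls this covariance by a \emph{multiplicative interpolation}: it introduces two independent right-invariant copies $\widetilde{\mathcal G},\hat{\mathcal G}$ and sets $h(u)=\E[\tr_N R^{z_1}(\mathcal G_u\widetilde{\mathcal G}_{t-u})\,\tr_N R^{z_2}(\mathcal G_u\hat{\mathcal G}_{t-u})]$, so that $\Cov=\int_0^t h'(u)\,du$. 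The point of the opposite-direction SDEs is that all drift and same-side quadratic-variation terms in $h'(u)$ cancel pairwise, leaving only the \emph{cross} quadratic covariation between the two resolvents, which carries the $1/N^2$ (this is Lemma~\ref{cov}).

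If you push your martingale program through, you will be forced to make $M_s$ explicit via the Markov property, i.e.\ write $M_s=\E[\tr_N f(PP^*(\G_s\cdot\hat\G'_{t-s},\ldots))]\big|_{\G_s}$ with an independent copy $\hat\G'$ of the future increments, and then differentiate in $\G_s$; at that point you are essentially at the paper's interpolation, with the backward-Kolmogorov cancellation of the drift playing the role of the paper's left/right cancellation. So the two routes converge, but the paper's explicit setup is slightly more economical and, more importantly, produces the structured formula of Lemma~\ref{cov} (terms of the form $\tfrac{1}{N^2}\int_0^t\E\,\tr_N[(\widetilde R^{z_1})^2 Q_1(\hat R^{z_2})^2 Q_2]\,du$), which is then re-used together with Theorem~\ref{theo:fundamental} to upgrade the bound to $O(1/N^4)$ for the specific functions $f_{N,\delta}$. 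Your martingale approach gives the $O(1/N^2)$ cleanly but would not by itself expose that finer structure. Your identification of Helffer--Sj\"ostrand for handling $f\in C^3$ and the need for uniform $L^q$ bounds on $G,G^{-1},G^*,G^{-1,*}$ (the paper's Corollary~\ref{NormeLp}) is exactly right.
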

The rate $O(1/N^2)$ was to be expected, since Kemp \cite[Theorem 1.5.]{kemp2017heat} has shown this kind of variance estimates for $f$ in some Gevrey class,  and $\lambda,\tau\in \mathbb{R}$. Adding the compactness of the support of $f$ allows us to weaken the regularity of $f$ to $C^3(\mathbb{R})$.
 
\paragraph{Organization of the paper.}
The paper is organized as follows:   In Section~\ref{Section: Notation}, we provide the necessary background, precisely define the tools and objects under study, and introduce the notation used throughout the paper. Additionally, we present some basic but important stochastic computations that will be essential for the proof.   In Section~\ref{Section: idea of proof}, we outline the main steps of the proof and highlight the key intermediate results that need to be established. Sections~\ref{Section:Estimating Variance} and~\ref{Section:fundamental result} form the core of the paper, containing the main proofs.  Finally, the Appendix includes the proof of the weak convergence step.  
 
 \section{Set-up, notation and basic properties}\label{Section: Notation}
 In this section, we provide the necessary background for this paper on free probability, multiplicative Brownian motions on general linear groups, and other necessary tools for the proofs.
 
 \paragraph{Noncommutative probability spaces and distributions.} 

 A ${\cal C}^*$-probability space is a pair $\left({\mathcal A}, \varphi\right)$ consisting of a unital $ {\mathcal C}^*$-algebra ${\mathcal A}$ and  
a linear map $\varphi: {\cal A}\rightarrow \mathbb{C}$ such that $\varphi(1_{\cal A})=1$ and $\varphi(aa^*)\geq 0$ for all $a \in {\mathcal A}$. The map $\varphi$ is a trace if it satisfies $\varphi(ab)=\varphi(ba)$ for every $(a,b)\in {\mathcal A}^2$. A trace is said to be faithful if $\varphi(aa^*)>0$ whenever $a\neq 0$. 
An element of ${\mathcal A}$ is called a noncommutative random variable.

 The noncommutative distribution of a family $\mathbf{a}=(a_1,\ldots,a_p)$ of noncommutative random variables in a ${\mathcal C}^*$-probability space $\left({\mathcal A}, \varphi\right)$ is defined as the linear functional $\mu_{\mathbf{a}}:P\mapsto \varphi(P(\mathbf{a},\mathbf{a}^*))$ defined on the set of polynomials in $2p$ noncommutative indeterminates, where $(\mathbf{a},\mathbf{a}^*)$ denotes the $2p$-tuple $(a_1,\ldots,a_p,a_1^*,\ldots,a_p^*)$.

For any self-adjoint element $a$ in a ${\mathcal C}^*$-probability space  $\left({\mathcal A}, \varphi\right)$,  there exists a probability  measure $\nu_{a}$ on $\mathbb{R}$ such that,   for every polynomial P, we have
$$\mu_{a}(P)=\int P(t) \mathrm{d}\nu_{a}(t).$$
We then identify $\mu_{a}$ and $\nu_{a}$. If $\varphi$ is faithful then the support of $\nu_{a}$ is the spectrum of $a$ leading to the norm characterization  $\|a\| = \sup\{|z|, z\in \rm{support} (\nu_{a})\}$. 

 A fundamental distribution in  free probability is the semicircular distribution. A self-adjoint noncommutative random variable $a$ in a ${\mathcal C}^*$-probability space  $\left({\mathcal A}, \varphi\right)$ is said to be  semicircular  with mean $0$ and variance $\sigma^2$ if
  for any $k\in \mathbb{N}$, $$\varphi(x_i^k)= \frac{1}{2\pi} \int t^k 
\sqrt{4\sigma^2-t^2}{\1}_{[-2\sigma;2\sigma]}(t) dt.$$ We say that a semicircular noncommutative random variable is standard if its variance is equal to one. 
\paragraph{Freeness, free product.} A family of noncommutative random variables $(a_i)_{i\in I}$ in a ${\mathcal C}^*$-probability space  $\left({\cal A}, \varphi\right)$ is free if for all $p\in \mathbb{N}$ and all polynomials $P_1,\ldots,P_p$ in two noncommutative indeterminates, one has 
\begin{equation}\label{freeness}
\varphi(P_1(a_{i_1},a_{i_1}^*)\cdots P_k (a_{i_p},a_{i_p}^*))=0
\end{equation}
whenever $i_1\neq i_2, i_2\neq i_3, \ldots, i_{p-1}\neq i_k$ and $\varphi(P_l(a_{i_l},a_{i_l}^*))=0$ for $l=1,\ldots,p$.

For a thorough introduction to free probability theory, we refer to \cite{voiculescu1992free}.
\begin{theorem}\label{freeproduct} (\cite[Theorem 7.9]{NicaSpeicher})
    Let $(\mathcal{A}_i, \varphi_i)_{i\in I}$ be a family of  ${\mathcal C}^*$-probability spaces such that the functionals $\varphi_i: \mathcal{A}_i \rightarrow \mathbb{C}, i\in I$ are faithful traces. Then there exists a ${\mathcal C}^*$-probability space $(\mathcal{A}, \varphi)$ with $\varphi$ a faithful trace and a family of norm-preserving unital $*$-homomorphisms
    $W_i: \mathcal{A}_i\rightarrow \mathcal{A}, i\in I,$ such that 
    \begin{itemize}
        \item $\varphi\circ W_i=\varphi_i, \forall i\in I$,
        \item the unital ${\mathcal C}^*$-subalgebras form a free family in $(\mathcal{A}, \varphi)$.
    \end{itemize}
\end{theorem}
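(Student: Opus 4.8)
The plan is to realize $(\mathcal{A},\varphi)$ as a \emph{reduced free product} of $C^*$-algebras, via the GNS-plus-free-product-of-Hilbert-spaces construction. First, for each $i\in I$ apply the GNS construction to $(\mathcal{A}_i,\varphi_i)$ to obtain a Hilbert space $\mathcal{H}_i$, a unit cyclic vector $\xi_i$, and a unital $*$-representation $\pi_i\colon\mathcal{A}_i\to B(\mathcal{H}_i)$ with $\varphi_i(a)=\langle\pi_i(a)\xi_i,\xi_i\rangle$. Since $\varphi_i$ is faithful, $\pi_i$ is injective: if $\pi_i(a)=0$ then $\varphi_i(a^*a)=\|\pi_i(a)\xi_i\|^2=0$, so $a=0$; and an injective $*$-homomorphism of $C^*$-algebras is isometric. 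Write $\mathcal{H}_i=\mathbb{C}\xi_i\oplus\mathcal{H}_i^{\circ}$ with $\mathcal{H}_i^{\circ}=\xi_i^{\perp}$, and note that $\pi_i(\ker\varphi_i)\xi_i$ is dense in $\mathcal{H}_i^{\circ}$ (project the dense set $\pi_i(\mathcal{A}_i)\xi_i$ onto $\mathcal{H}_i^{\circ}$: the image of $\pi_i(a)\xi_i$ is $\pi_i(a-\varphi_i(a)1)\xi_i$).

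Next, form the free product of pointed Hilbert spaces $(\mathcal{H},\Omega)=\ast_{i\in I}(\mathcal{H}_i,\xi_i)$, i.e.
\[
\mathcal{H}=\mathbb{C}\Omega\ \oplus\ \bigoplus_{n\ge 1}\ \bigoplus_{i_1\neq i_2\neq\cdots\neq i_n}\mathcal{H}_{i_1}^{\circ}\otimes\cdots\otimes\mathcal{H}_{i_n}^{\circ}.
\]
For each fixed $i$ there is a canonical unitary $\mathcal{H}\cong\mathcal{H}_i\otimes\mathcal{H}(i)$, where $\mathcal{H}(i)=\ast_{j\neq i}(\mathcal{H}_j,\xi_j)$, carrying $\xi_i\otimes\Omega(i)$ to $\Omega$; transporting $\pi_i(\cdot)\otimes 1$ through it yields a unital $*$-representation $\lambda_i\colon\mathcal{A}_i\to B(\mathcal{H})$, isometric since $\|\lambda_i(a)\|=\|\pi_i(a)\otimes 1\|=\|a\|$. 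Set $W_i:=\lambda_i$ (norm-preserving), let $\mathcal{A}$ be the $C^*$-subalgebra of $B(\mathcal{H})$ generated by $\bigcup_i W_i(\mathcal{A}_i)$, and $\varphi:=\langle\,\cdot\,\Omega,\Omega\rangle$. Being a vector state on $B(\mathcal{H})$, $\varphi$ is automatically a state with $\varphi(1_{\mathcal{A}})=1$.

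Now I verify the stated clauses. From the identification, $\varphi(W_i(a))=\langle(\pi_i(a)\otimes 1)(\xi_i\otimes\Omega(i)),\xi_i\otimes\Omega(i)\rangle=\varphi_i(a)$, so $\varphi\circ W_i=\varphi_i$. Freeness of the $W_i(\mathcal{A}_i)$ reduces, using $\varphi\circ W_i=\varphi_i$ and that each $W_i$ is a $*$-homomorphism, to the following computation, proved by induction on $n$: if $a_k\in\ker\varphi_{i_k}$ with $i_1\neq i_2\neq\cdots\neq i_n$ and $\eta_k:=\pi_{i_k}(a_k)\xi_{i_k}\in\mathcal{H}_{i_k}^{\circ}$, then $\lambda_{i_1}(a_1)\cdots\lambda_{i_n}(a_n)\Omega=\eta_1\otimes\cdots\otimes\eta_n\in\mathcal{H}_{i_1}^{\circ}\otimes\cdots\otimes\mathcal{H}_{i_n}^{\circ}\perp\Omega$, whence $\varphi(\lambda_{i_1}(a_1)\cdots\lambda_{i_n}(a_n))=0$, which is exactly \eqref{freeness}. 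Traciality of $\varphi$ then follows from the general algebraic fact that the free product functional of a family of traces is again a trace (a combinatorial induction on the length of alternating words, using traciality of each $\varphi_i$ together with the freeness just established); it passes to the $C^*$-completion by continuity. The same induction, combined with the density of $\pi_i(\ker\varphi_i)\xi_i$ in $\mathcal{H}_i^{\circ}$ from Step~1, shows $\Omega$ is cyclic for $\mathcal{A}$, so $(\mathrm{id}_{\mathcal{A}},\mathcal{H},\Omega)$ is the GNS representation of $(\mathcal{A},\varphi)$.

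It remains to prove that $\varphi$ is \emph{faithful}, and this I expect to be the main obstacle: everything so far used only that each $\varphi_i$ is a state, whereas faithfulness genuinely uses that the $\varphi_i$ are \emph{faithful traces}. Since $\Omega$ is cyclic, faithfulness of $\varphi$ amounts to $\Omega$ being a separating vector for $\mathcal{A}\subseteq B(\mathcal{H})$. To obtain this, exploit traciality: each $\mathcal{H}_i=L^2(\mathcal{A}_i,\varphi_i)$ also carries a commuting \emph{right} action $\rho_i$ of $\mathcal{A}_i$ (given by $\rho_i(a)\widehat{b}=\widehat{ba}$, bounded because $\varphi_i(b^*a^*ab)=\varphi_i((a^*a)(bb^*))\le\|a\|^2\varphi_i(bb^*)$ by traciality) with $\rho_i(\mathcal{A}_i)\subseteq\pi_i(\mathcal{A}_i)'$ and $\xi_i$ cyclic for it. Assembling these on $\mathcal{H}$ by the mirror-image construction — grouping reduced words according to their \emph{last} letter rather than their first — produces representations $\rho_i^{\mathrm{free}}$ whose images commute with every $\lambda_j(\mathcal{A}_j)$, hence with all of $\mathcal{A}$, and for which $\Omega$ is again cyclic. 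Then if $x\in\mathcal{A}$ satisfies $\varphi(x^*x)=\|x\Omega\|^2=0$, for every $y$ in the right $C^*$-algebra we get $xy\Omega=yx\Omega=0$, and density of $\{y\Omega\}$ forces $x=0$; thus $\varphi$ is faithful. (Equivalently, on the von Neumann algebra level the construction exhibits $\Omega$ as a cyclic and separating vector for the free-product algebra, so the extended trace—and a fortiori its restriction to $\mathcal{A}$—is faithful.) The delicate points are therefore the combinatorial freeness/traciality computations and, above all, the construction of the commuting right action that makes $\Omega$ separating.
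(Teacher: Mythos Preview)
The paper does not prove this theorem at all: it is stated with the attribution ``(\cite[Theorem 7.9]{NicaSpeicher})'' and no proof follows, the text moving immediately to the next paragraph on free Brownian motion. So there is no ``paper's own proof'' to compare against; the authors simply quote the result from the literature.

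Your proposal is a correct outline of the standard reduced free product construction due to Voiculescu, which is indeed what underlies the cited reference. The GNS step, the free product Hilbert space, the left representations $\lambda_i$, the inductive verification of freeness, and the faithfulness argument via the commuting right action are all standard and accurate. One small remark: your identification $\mathcal{H}\cong\mathcal{H}_i\otimes\mathcal{H}(i)$ is slightly off as written, since $\mathcal{H}(i)$ should be the subspace of $\mathcal{H}$ spanned by $\Omega$ together with tensors whose \emph{first} index differs from $i$, not the free product over $j\neq i$; but this is a notational slip and does not affect the argument. Overall your sketch is sound and matches the approach in Nica--Speicher.
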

\paragraph{Free Brownian motion.}
Let $\mathcal A$ denote a von Neumann algebra and $\varphi$ be a faithful normal, tracial
state on $\mathcal A$. 
Let  $(\mathcal A_t)_{t\geq 0}$ be a family  of unital weakly closed $\ast$-subalgebras of $\mathcal A$ with
$\mathcal A_u\subset \mathcal A_t$ for $u\leq t$. A $(\mathcal A_t)$-free semicircular Brownian motion $(s_t)_{t\geq 0}$ is a family of 
a self-adjoint elements of  $\mathcal A$ such that $s_t$ has  semicircular distribution 
with mean 0 and variance $t$, $s_t\in \mathcal A_t$ and for $u\leq t$, $s_t-s_u$ is free from $\mathcal A_s$ and has 
 semicircular distribution 
with mean 0 and variance $t-u$.
We refer to \cite{BianeSpeicher} for the construction of a free Brownian motion on the
free Fock space, using creation and annihilation operators.
\paragraph{Strong convergence.}
Let $\mathbf a=(a_1,\ldots,a_p)$ be a  $p$-tuple of noncommutative random variables in a $\mathcal C ^*$-noncommutative probability space  $(\mathcal A,\varphi)$. 
 Let $(\mathbf{a}_N)_{N\geq 1}=(a_1^{(N)}, \ldots, a_p^{(N)})_{N\geq 1}$ be a sequence of  $p$-tuples of noncommutative random variables in  $\mathcal C ^*$-noncommutative probability spaces $(\mathcal A_N,\varphi_N)$.  We say that $(\mathbf{a}_N)_{N\geq 1}$ converges in $*$-distribution  
 towards $\mathbf a$ if for any polynomial $P$ in $2k$ noncommutative indeterminates,
$$\lim_{N\rightarrow +\infty} \varphi_N(P(\mathbf a_N, \mathbf a^*_N))=\varphi(P(\mathbf a, \mathbf a^*)).$$
We say that $(\mathbf{a}_N)_{N\geq 1}$ strongly converges towards $\mathbf a$  if $(a_N)_{N\geq 1}$ converges in $*$-distribution  
and if  for any polynomial $P$ in $2k$  noncommutative indeterminates,
$$\lim_{N\rightarrow +\infty} \|P(\mathbf a_N, \mathbf a^*_N)\|=\|P(\mathbf a, \mathbf a^*)\|,$$
where $\|.\|$ denotes the operator norm. According to Proposition 2.1 in \cite{CollinsMale}, we have the following equivalent formulation.
\begin{proposition}\label{scCollinsMale}
Let $\mathbf{a}_N=(a_1^{(N)}, \ldots, a_p^{(N)})$ and $\mathbf a =(a_1,\ldots,a_p)$ be $p$-tuples of variables in $\mathcal{C}^*$-probability spaces, $(\mathcal A_N, \varphi_N)$ and $(\mathcal A, \varphi)$ with faithful states. Then the following assertions are equivalent.
\begin{enumerate} \item $\mathbf{a}_N$ converges strongly to $\mathbf{a}$.
\item For any self-adjoint variable $h_N=P(\mathbf{a}_N,\mathbf{a}_N^*)$, where $P$ is a fixed polynomial, $\mu_{h_N}$ converges in weak $\ast$-topology (that is relatively to continuous function on $\C$) to $\mu_h$, where we
denote by   $\mu_{h_N}$ the distribution of $h_N$ and by $\mu_h$ the distribution of $h=P(\mathbf a,\mathbf a^*)$.
Moreover, the support of $\mu_{h_N}$ converges in Hausdorff distance to the support of $\mu_h$, that is:
 for any $\epsilon >0$, there exists $N_0$ such that for any $N\geq N_0$, 
$$\mbox{Supp}(\mu_{h_N}) \subset \mbox{Supp}(\mu_{h})+(-\epsilon, \epsilon).$$
The symbol $\mbox{Supp}$ means the support of the measure.
\end{enumerate}
In particular, the strong convergence in distribution of a single self-adjoint variable is its convergence in
distribution together with the Hausdorff convergence of its spectrum.
\end{proposition}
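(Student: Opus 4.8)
\emph{Plan.} I would prove the two implications separately, using throughout that $\varphi$ and $\varphi_N$ are faithful, so that for a self-adjoint element $h$ the support of $\mu_h$ is exactly the spectrum $\mathrm{sp}(h)$ and $\|h\|=\sup\{|x|:x\in\mathrm{Supp}(\mu_h)\}$.

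\emph{$(1)\Rightarrow(2)$.} Fix self-adjoint $h_N=P(\mathbf a_N,\mathbf a_N^*)$ and $h=P(\mathbf a,\mathbf a^*)$. Since $h_N$ is itself a polynomial evaluation, strong convergence gives $\|h_N\|\to\|h\|$; in particular $M:=\sup_N\max(\|h_N\|,\|h\|)<\infty$, so all the measures $\mu_{h_N},\mu_h$ are supported in the fixed compact set $[-M,M]$. Convergence in $*$-distribution gives $\varphi_N(h_N^k)\to\varphi(h^k)$ for every $k$, and the Weierstrass theorem upgrades this to weak-$*$ convergence $\mu_{h_N}\to\mu_h$ (testing against continuous functions on $\mathbb C$ reduces, by the uniform compact support, to testing on $[-M,M]$). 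The substantive point is the inclusion $\mathrm{Supp}(\mu_{h_N})\subset\mathrm{Supp}(\mu_h)+(-\delta,\delta)$ for $N$ large. To get it I would first extend norm convergence from polynomials to continuous functional calculus: for $\psi\in C([-M,M])$, approximate $\psi$ uniformly by polynomials $q_j$; since $\mathrm{sp}(h_N)$ and $\mathrm{sp}(h)$ lie in $[-M,M]$ one has $\|\psi(h_N)-q_j(h_N)\|\le\|\psi-q_j\|_\infty$ and similarly for $h$, while each $q_j(h_N)$ is again a polynomial in $\mathbf a_N,\mathbf a_N^*$, so a $3\varepsilon$ argument yields $\|\psi(h_N)\|\to\|\psi(h)\|$. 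Then, given $\delta>0$, I would take $\psi\in C(\mathbb R)$ with $0\le\psi\le1$, $\psi\equiv0$ on $\mathrm{sp}(h)$ and $\psi\equiv1$ on $\{x:\mathrm{dist}(x,\mathrm{sp}(h))\ge\delta\}$, e.g. $\psi=\min\!\big(1,\delta^{-1}\mathrm{dist}(\cdot,\mathrm{sp}(h))\big)$. Since $\|\psi(h)\|=0$ we get $\|\psi(h_N)\|\to0$, whereas a point $\lambda\in\mathrm{sp}(h_N)$ at distance $\ge\delta$ from $\mathrm{sp}(h)$ would force $\|\psi(h_N)\|\ge\psi(\lambda)=1$; hence eventually $\mathrm{sp}(h_N)\subset\mathrm{sp}(h)+(-\delta,\delta)$, which is the inclusion in (2). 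The reverse inclusion $\mathrm{Supp}(\mu_h)\subset\mathrm{Supp}(\mu_{h_N})+(-\delta,\delta)$ I would deduce directly from the weak-$*$ convergence: every $\lambda_0\in\mathrm{Supp}(\mu_h)$ has positive $\mu_h$-mass in each neighbourhood, so testing against a nonnegative bump function shows $\mu_{h_N}$ eventually charges a $\delta$-neighbourhood of $\lambda_0$, and a finite subcover of the compact set $\mathrm{Supp}(\mu_h)$ makes the threshold uniform.

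\emph{$(2)\Rightarrow(1)$.} The Hausdorff convergence again forces the $\mu_{h_N}$ to be uniformly compactly supported, so weak-$*$ convergence applied to the identity function (legitimate on a fixed compact set) gives $\varphi_N(h_N)\to\varphi(h)$ for every self-adjoint polynomial. Decomposing a general polynomial evaluation as $Q(\mathbf a_N,\mathbf a_N^*)=H_1+iH_2$ with $H_1,H_2$ self-adjoint polynomials in $\mathbf a_N,\mathbf a_N^*$ and applying $\varphi_N$ then gives convergence in $*$-distribution. For the operator norms I would use the $C^*$-identity $\|Q(\mathbf a_N,\mathbf a_N^*)\|^2=\|R(\mathbf a_N,\mathbf a_N^*)\|$ with $R=Q^*Q$ self-adjoint, reducing to self-adjoint $h_N$; there $\|h_N\|=\sup\{|x|:x\in\mathrm{Supp}(\mu_{h_N})\}$, and since $S\mapsto\sup\{|x|:x\in S\}$ is $1$-Lipschitz for the Hausdorff distance on nonempty compact subsets of $\mathbb R$, the convergence $\mathrm{Supp}(\mu_{h_N})\to\mathrm{Supp}(\mu_h)$ yields $\|h_N\|\to\|h\|$.

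\emph{Main obstacle.} The delicate implication is $(1)\Rightarrow(2)$, and within it the step that converts the hypothesis — norm convergence for \emph{polynomial} evaluations — into control of the \emph{spectrum} of an arbitrary self-adjoint polynomial. The bridge is the continuous-functional-calculus extension of strong convergence, combined with the choice of a test function $\psi$ that detects eigenvalues of $h_N$ escaping a $\delta$-neighbourhood of $\mathrm{sp}(h)$; the remaining arguments (moments $\Rightarrow$ weak-$*$, weak-$*$ $\Rightarrow$ reverse support inclusion, $C^*$-identity $\Rightarrow$ reduction to self-adjoint polynomials) are soft.
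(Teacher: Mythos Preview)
The paper does not actually prove this proposition: it is quoted verbatim as ``Proposition~2.1 in \cite{CollinsMale}'' and used as a black box, so there is no in-paper argument to compare against. Your proof is correct and is essentially the standard one (and, up to presentation, the one given in Collins--Male): reduce to self-adjoint polynomials via the $C^*$-identity, pass from polynomial norm convergence to continuous functional calculus by Weierstrass approximation on a common compact spectral interval, and detect escaping spectrum with a bump function vanishing on $\mathrm{sp}(h)$. One small remark: in the statement, item~(2) only spells out the one-sided inclusion $\mathrm{Supp}(\mu_{h_N})\subset \mathrm{Supp}(\mu_h)+(-\epsilon,\epsilon)$, precisely because the reverse inclusion is automatic from weak-$*$ convergence (as you observe); so in $(2)\Rightarrow(1)$ you do not need to invoke full Hausdorff convergence to get $\liminf\|h_N\|\ge\|h\|$, your own bump-function argument from weak-$*$ convergence already supplies it.
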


 \paragraph{Multiplicative $(\lambda,\tau)$-Brownian motion.} We say $(H_t)_{t\geq 0}$ is an $N \times N$ Wigner Brownian motion whenever it is an $N \times N$ Hermitian matrix with iid complex Brownian motions above the diagonal and iid real Brownian motions on the diagonal, all of variance $t/N$ and starting at $0$.  Let $\theta\in \mathbb{R}$, $a\geq 0$ and $b\geq 0$, and set 
$\lambda=a^2+b^2$ and $ \tau=\lambda-e^{2i\theta}(a^2-b^2)$. For convenience, we recall the definition of the rotated elliptic Brownian motion  $(Z_{\lambda,\tau}(t))_{t\geq 0}$ on $M_N(\mathbb{C})$ defined above:
\begin{equation*}
    Z_{\lambda,\tau}(t)= e^{i \theta}(aH_t+ib\tilde H_t),
\end{equation*}
where $(H_t)_{t\geq 0}$ and $(\tilde H_t)_{t\geq 0}$ are two independent Wigner Brownian motions. Note that its law is uniquely defined by the parameters $\lambda\geq 0$ and $\tau\in \mathbb{C}$. These two parameters $\lambda$ and $\tau$ correspond respectively to the positive variance and the complex covariance:
$$\lambda=\mathbb{E}\left[\frac{1}{N}\Tr\Big(Z_{\lambda,\tau}(1)^* Z_{\lambda,\tau}(1)\Big)\right],\ \ \tau=\mathbb{E}\left[\frac{1}{N}\Tr\Big(Z_{\lambda,\tau}(1)^* Z_{\lambda,\tau}(1)\Big)\right]-\mathbb{E}\left[\frac{1}{N}\Tr\Big(Z_{\lambda,\tau}(1) ^2\Big)\right] .$$
The multiplicative $(\lambda,\tau)$-Brownian motion $(G_{\lambda, \tau}(t))_{t\geq 0}$ satisfies the Stratonovich SDE in \eqref{Stratonovich SDE}, which can be written in It\^o form as follows: for any $t\geq 0$,
\begin{equation}\label{Stoch Diff Eq}
    dG_{\lambda, \tau}(t)= \, G_{\lambda, \tau}(t)\big( i \, dZ_{\lambda, \tau}(t) -\frac{1}{2}(\lambda-\tau)\, dt \big) \quad \text{with}\quad G_{\lambda, \tau}(0)=I_N.
\end{equation}
Because of the global Lipschitzness of the coefficients, the (strong) existence and the uniqueness of the solution follow from a standard Picard iteration argument.
 \begin{proposition}\label{SDEforinverse}
    For any $\lambda\geq 0$ and $\tau\in \mathbb{C}$ such that $|\tau-\lambda|\leq \lambda$, $(G_{\lambda, \tau}(t))_{t\geq 0}$ is invertible for all $t\geq 0$, and has  independent stationary multiplicative increments.  
\end{proposition}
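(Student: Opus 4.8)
\textbf{Proof proposal for Proposition \ref{SDEforinverse}.}

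The plan is to construct explicitly a process that we will show is the inverse of $(G_{\lambda,\tau}(t))_{t\geq 0}$, and then to read off stationarity and independence of increments from the left-invariance structure of the defining SDE. First I would apply It\^o's formula to the candidate. A natural guess is that the inverse process $K_t := G_{\lambda,\tau}(t)^{-1}$ should solve an SDE driven by $Z_{\lambda,\tau}$ on the \emph{right}: starting from the heuristic $d(G_t K_t) = 0$ together with $dG_t = G_t(i\,dZ_t - \tfrac12(\lambda-\tau)\,dt)$, one computes $dK_t = -K_t\,(dG_t)\,G_t^{-1} + K_t\,(dG_t)\,G_t^{-1}(dG_t)\,G_t^{-1} = -(i\,dZ_t - \tfrac12(\lambda-\tau)\,dt)K_t + (i\,dZ_t)(i\,dZ_t)K_t$, where the last term is the It\^o correction coming from the quadratic variation of $Z_{\lambda,\tau}$. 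Using the covariance structure recalled above, $(i\,dZ_t)(i\,dZ_t)$ contributes a deterministic drift proportional to $\lambda\,dt$ (the $\tfrac1N\Tr$ of the quadratic variation being governed by $\lambda$ and $\tau$), so $K_t$ satisfies a right-multiplicative linear SDE of the form $dK_t = \big(\!-i\,dZ_t + c\,dt\big)K_t$ for an explicit constant $c$ depending on $\lambda,\tau$. The point is only that this is again a linear SDE with globally Lipschitz coefficients, hence it has a unique strong solution $K_t$ defined for all $t\geq 0$; then another application of It\^o's formula to $t\mapsto G_t K_t$ shows $d(G_t K_t)=0$, so $G_t K_t = I_N$ for all $t$ almost surely, and symmetrically $K_t G_t = I_N$. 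This proves invertibility for all $t\geq 0$ simultaneously (on a single almost-sure event), which is stronger than the pathwise-for-fixed-$t$ statement and avoids any appeal to $\det G_t\neq 0$.

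For the increments, fix $0\le s\le t$ and set $\Delta_{s,t} := G_s^{-1}G_t$. Because the SDE \eqref{Stoch Diff Eq} is left-invariant — the coefficients multiply $G$ on the left and the driving noise $Z_{\lambda,\tau}$ has stationary independent increments — the process $(\Delta_{s,s+u})_{u\geq 0}$ solves the \emph{same} SDE as $(G_{\lambda,\tau}(u))_{u\geq 0}$ but driven by the shifted noise $(Z_{\lambda,\tau}(s+u)-Z_{\lambda,\tau}(s))_{u\geq 0}$, with initial condition $I_N$. Concretely I would check this by writing $d\Delta_{s,s+u} = G_s^{-1}\,dG_{s+u} = G_s^{-1}G_{s+u}\big(i\,dZ_{\lambda,\tau}(s+u) - \tfrac12(\lambda-\tau)\,du\big) = \Delta_{s,s+u}\big(i\,d\tilde Z_u - \tfrac12(\lambda-\tau)\,du\big)$, where $\tilde Z_u := Z_{\lambda,\tau}(s+u)-Z_{\lambda,\tau}(s)$. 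Since $\tilde Z$ is again a rotated elliptic Brownian motion with the same parameters $(\lambda,\tau)$ (stationarity of increments of $Z_{\lambda,\tau}$, itself inherited from the Wigner Brownian motions $H,\tilde H$), uniqueness in law for this SDE gives that $\Delta_{s,t}$ has the same law as $G_{\lambda,\tau}(t-s)$, i.e.\ stationarity of the multiplicative increments. For independence, one observes that $\Delta_{t_0,t_1}, \Delta_{t_1,t_2}, \dots$ for $t_0<t_1<t_2<\cdots$ are measurable functions of the increments $Z_{\lambda,\tau}$ over disjoint time intervals $[t_0,t_1], [t_1,t_2],\dots$ respectively — this is exactly the content of the displayed computation, since $\Delta_{t_{i},t_{i+1}}$ is built by solving an SDE driven only by $(Z_{\lambda,\tau}(t_i+u)-Z_{\lambda,\tau}(t_i))_{0\le u\le t_{i+1}-t_i}$ — and these noise increments are independent because $Z_{\lambda,\tau}$ has independent increments. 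Hence the multiplicative increments are independent.

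The step I expect to be the main obstacle — or at least the one requiring the most care — is the It\^o computation for the inverse and the bookkeeping of the quadratic-variation corrections: the noise $Z_{\lambda,\tau}$ is not a standard matrix Brownian motion, so the cross-variations of its entries carry both the $\lambda$ and $\tau$ data (via the identities $\lambda = \E[\tfrac1N\Tr(Z^*Z)]$ and $\tau = \lambda - \E[\tfrac1N\Tr(Z^2)]$ recalled above), and one must be scrupulous about which products pick up $\lambda$ versus $\tau$ and about the ordering of matrix factors since everything is noncommutative. Once the correct right-multiplicative SDE for $K_t$ is identified, the rest is a routine uniqueness-and-Lipschitz argument; the degeneracy condition $|\tau-\lambda|\le\lambda$ enters only to guarantee that \eqref{Stoch Diff Eq} is a genuine (well-posed) SDE in the first place, via $a,b$ being real, as set up before the statement. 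I would also remark that an alternative, essentially equivalent route is to invoke that $GL_N(\mathbb{C})$ is a Lie group and $(G_{\lambda,\tau}(t))$ is by construction a left Brownian motion on it in the sense of Hunt, for which invertibility and independent stationary multiplicative increments are general facts; but the hands-on SDE argument above is self-contained and keeps the constants explicit, which will be convenient later.
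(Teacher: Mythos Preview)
Your approach matches the paper's: define an explicit right-multiplicative process $K_t$ solving $dK_t = \big(-i\,dZ_{\lambda,\tau}(t) - \tfrac12(\lambda-\tau)\,dt\big)K_t$ and verify $d(G_tK_t)=0$ via It\^o's formula, then use left-invariance of the SDE together with stationarity and independence of the increments of $Z_{\lambda,\tau}$ for the multiplicative increments (the paper simply defers that part to Section~5.1 of Kemp~\cite{kemp2016large}, whereas you spell it out). One small slip: by \eqref{crochetZ} the It\^o correction $(i\,dZ_t)(i\,dZ_t)$ contributes $-(\lambda-\tau)\,dt$, not something proportional to $\lambda$, so the drift constant is $c=-\tfrac12(\lambda-\tau)$, exactly as in the paper's SDE for $K$.
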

\begin{proof}
    Define $(K(t))_{t\geq 0}$ to be the solution to
    $$ dK(t)= \, \big( -i \, dZ_{\lambda, \tau}(t) -\frac{1}{2}(\lambda-\tau)\, dt \big) K(t)\quad \text{with}\quad K(0)=I_N.$$
    Using stochastic calculus as described below,  It\^{o}'s formula and the expression \eqref{crochetZ} of the
quadratic variation of $Z_{\lambda,\tau}$ imply that $d(G_{\lambda, \tau}(t)K(t))=0$. This proves that $G_{\lambda, \tau}(t)K(t)=I_N$ for all $t\geq 0$.  The proof of the independence and stationary of the multiplicative increments is a straightforward adaptation of the proof for the two-parameter family studied by Kemp, see Section 5.1 of \cite{kemp2016large}.
\end{proof}
 
 \paragraph{Free multiplicative $(\lambda,\tau)$-Brownian motion.} Let $(\A, \varphi)$ be a $W^*$-noncommutative probability space, that is $\A$ is a von Neumann algebra and $\varphi$ is a faithful, normal, tracial state.  Consider the rotated elliptic element, which is of the form
\begin{equation}\label{RE element z}
    z=e^{i \theta}(ax+iby),
\end{equation}
where $x$ and $y$ are freely  independent standard semicircular elements, $\theta$ a real number and $a,b$ are real numbers that are not both zero.  We consider the positive variance parameter $\lambda$ and the complex covariance parameter $\tau$ defined by 
\[
\lambda=\varphi[z^\ast z]=a^2 +b^2 \quad \text{and} \quad \tau= \varphi[z^\ast z] - \varphi[z^2]= \lambda - e^{2i\theta}(a^2-b^2).
\]
For any $t\geq 0$, we construct the rotated elliptic Brownian motion $z_{\lambda,\theta}(t)$ as
\begin{equation*}
    z_{\lambda,\tau}(t)= e^{i \theta}(ax_t+iby_t), 
\end{equation*}
where $a, b$ and $\theta$ as chosen above and where $x_t$ and $y_t$ are freely independent semicircular Brownian motions. Finally, we consider the free multiplicative $(\lambda,\tau)$-Brownian motion $g_{\lambda, \tau}$ defined for any $t\geq 0$ as the solution of the free stochastic differential equation 
\begin{equation}\label{free Stoch Diff Eq}
    dg_{\lambda, \tau}(t)= g_{\lambda, \tau}(t)\big( i \, dz_{\lambda, \tau}(t) -\frac{1}{2}(\lambda-\tau)\, dt \big) \quad \text{with}\quad g_{\lambda, \tau}(0)=I_\A.
\end{equation}
The existence and uniqueness of the solution in $\A$ follow from a standard Picard iteration argument. We refer to \cite{Hall-Ho-23} for more details on rotated elliptic Brownian motions. We show in the following proposition that $(g_{\lambda, \tau}(t))_{t\geq 0}$ inherits the properties of $(G_{\lambda, \tau}(t))_{t\geq 0}$.
\begin{proposition}\label{Prop:stationarity}
    For any $\lambda\geq 0$ and $\tau\in \mathbb{C}$ such that $|\tau-\lambda|\leq \lambda$, $(g_{\lambda, \tau}(t))_{t\geq 0}$ is invertible for all $t\geq 0$, has freely independent stationary multiplicative increments.  
\end{proposition}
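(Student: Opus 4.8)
The strategy is to transcribe the proof of Proposition~\ref{SDEforinverse} into the free setting, replacing matrix stochastic calculus by the free stochastic calculus of Biane--Speicher \cite{BianeSpeicher} (see also \cite{Hall-Ho-23}) and ordinary independence of increments by free independence. \emph{Invertibility:} I would first introduce the candidate inverse $(k(t))_{t\ge 0}$ as the unique solution in $\mathcal A$ (again obtained by Picard iteration) of the \emph{left} free stochastic differential equation
\[
dk(t)=\bigl(-i\,dz_{\lambda,\tau}(t)-\tfrac12(\lambda-\tau)\,dt\bigr)\,k(t),\qquad k(0)=I_{\mathcal A}.
\]
Applying the free It\^o product formula to $g_{\lambda,\tau}(t)k(t)$ and to $k(t)g_{\lambda,\tau}(t)$, and using the free analogue of the quadratic covariation of $z_{\lambda,\tau}$ (the role played by the quadratic-variation formula for $Z_{\lambda,\tau}$ in the matrix proof, here expressed through $\varphi$ via the rule $dz_{\lambda,\tau}\,a\,dz_{\lambda,\tau}\sim\varphi(a)\,dt$ for adapted $a$), one checks that all the stochastic and the finite-variation contributions cancel, so that $d\bigl(g_{\lambda,\tau}(t)k(t)\bigr)=0=d\bigl(k(t)g_{\lambda,\tau}(t)\bigr)$. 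Hence $g_{\lambda,\tau}(t)k(t)=k(t)g_{\lambda,\tau}(t)=I_{\mathcal A}$ for every $t\ge 0$, i.e.\ $g_{\lambda,\tau}(t)$ is invertible with $g_{\lambda,\tau}(t)^{-1}=k(t)$. (Alternatively, since $\varphi$ is a faithful trace, $\mathcal A$ is a finite von Neumann algebra, so one-sided invertibility already forces two-sided invertibility.)

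\emph{Stationary free increments:} For $0\le s\le t$ set $\mathcal A_s:=W^\ast\bigl(\{z_{\lambda,\tau}(u):0\le u\le s\}\bigr)$; since $g_{\lambda,\tau}$ is adapted (its Picard approximants lie in $\mathcal A_s$), we have $g_{\lambda,\tau}(u)\in\mathcal A_s$ for all $u\le s$. Put $\widetilde z^{(s)}(u):=z_{\lambda,\tau}(s+u)-z_{\lambda,\tau}(s)$ and let $\widetilde g^{(s)}$ be the solution of the free SDE \eqref{free Stoch Diff Eq} driven by $\widetilde z^{(s)}$ with $\widetilde g^{(s)}(0)=I_{\mathcal A}$. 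By uniqueness of solutions one gets the flow identity $g_{\lambda,\tau}(s+u)=g_{\lambda,\tau}(s)\,\widetilde g^{(s)}(u)$ for all $u\ge 0$, whence $g_{\lambda,\tau}(s)^{-1}g_{\lambda,\tau}(t)=\widetilde g^{(s)}(t-s)$. From the rotated elliptic form \eqref{RE element z} together with the freeness and stationarity of the increments of the semicircular Brownian motions $x_t,y_t$, the process $\widetilde z^{(s)}$ has the same $\ast$-distribution as $z_{\lambda,\tau}$ and the von Neumann algebra it generates is free from $\mathcal A_s$. Therefore $g_{\lambda,\tau}(s)^{-1}g_{\lambda,\tau}(t)$ has the same distribution as $g_{\lambda,\tau}(t-s)$ and is free from $\mathcal A_s$, in particular from $W^\ast\bigl(\{g_{\lambda,\tau}(u):u\le s\}\bigr)$. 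Iterating along a partition $0=t_0<t_1<\cdots<t_n$: the first increment $g_{\lambda,\tau}(t_0)^{-1}g_{\lambda,\tau}(t_1)=g_{\lambda,\tau}(t_1)$ lies in $\mathcal A_{t_1}$, while the remaining increments lie in $W^\ast\bigl(\{\widetilde z^{(t_1)}(u):u\ge 0\}\bigr)$, which is free from $\mathcal A_{t_1}$ and inside which, by induction on $n$, they already form a free family; hence $\bigl(g_{\lambda,\tau}(t_{j-1})^{-1}g_{\lambda,\tau}(t_j)\bigr)_{1\le j\le n}$ is free. This gives the asserted free independence and stationarity of the multiplicative increments.

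\emph{Main difficulty:} The Picard existence/uniqueness and the algebraic cancellations in the free It\^o formula are routine. The delicate points are the rigorous free stochastic calculus for $\mathcal A$-valued SDEs with operator coefficients --- in particular the product rule $d(XY)=(dX)Y+X(dY)+(dX)(dY)$ with the correct free covariation term attached to $z_{\lambda,\tau}$, and the flow identity $g_{\lambda,\tau}(s+u)=g_{\lambda,\tau}(s)\widetilde g^{(s)}(u)$ --- together with the verification that the time-shifted elliptic Brownian motion $\widetilde z^{(s)}$ is genuinely free from the past filtration $\mathcal A_s$. All of these are by now standard, underlying both \cite{BianeSpeicher,Hall-Ho-23} and the matrix argument in \cite[Section~5.1]{kemp2016large}, so I would isolate them as preliminary lemmas and only indicate the adaptations.
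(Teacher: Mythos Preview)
Your proposal is correct and follows essentially the same approach as the paper: the paper simply refers to the two-parameter case in \cite[Section~5.2]{kemp2016large} as a ``straightforward adaptation,'' and points to the appendix for the free SDE satisfied by the inverse, which is precisely the candidate $k(t)$ you introduce and the free It\^o argument you outline. Your write-up in fact supplies more detail than the paper's own proof, but the underlying strategy---construct the inverse via a left free SDE and verify $d(g_{\lambda,\tau}(t)k(t))=0$, then deduce stationarity and freeness of increments from those of the driving elliptic free Brownian motion via the flow identity---is the same.
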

\begin{proof}The proof is again a straightforward adaptation of the two-parameter setting, see Section 5.2 of \cite{kemp2016large} for the details. In Section~\ref{proofofweakconv}, we will give more details about the inverse of $(g_{\lambda, \tau}(t))_{t\geq 0}$, which is also a solution of some free stochastic differential equation.
\end{proof}

 \paragraph{General stochastic calculus.}
We start by setting some notation and defining some maps that we use frequently in the sequel.
If $A$, $B$ and $C$ are three elements of an algebra, we use the following notation for the insertion map $
(A \otimes B) \sharp C = ACB $,
where $\otimes$ denotes the algebraic tensor product.

For any $t\geq 0$ and any two semi-martingales $N^{(1)}_t, N^{(2)}_t$ in $M_N(\mathbb{C})$, we define their quadratic covariation  by 
\begin{equation}\label{def: bracket}
    \langle N^{(1)}, N^{(2)}\rangle_t=\sum_{i,j=1 }^N \sum_{k,l =1 }^N\langle  N^{(1)}_{i,j},N^{(2)}_{kl} \,  \rangle_t E_{ij}\otimes E_{kl},\end{equation}
    where $(E_{ij})_{1\leq i,j\leq N}$ stands for the canonical basis of $M_N(\mathbb{C}).$
    This readily yields   that, for any adapted processes $A, B, C $ and $D$  in $M_N(\mathbb{C})$ and any process $V$ in $M_N(\mathbb{C})$,
    \begin{equation}\label{compute brackets} d\langle \int_0^{\cdot}A_udN^{(1)}_u B_u, \int_0^{\cdot}C_u dN^{(2)}_u D_u\rangle_t  \sharp V_t= A_t \left[d\langle N^{(1)}, N^{(2)}\rangle_t \sharp (B_tV_tC_t) \right] D_t.\end{equation}
   Now, by It\^o's formula, we have 
    for  semi-martingales $N^{(1)},\ldots, N^{(n)}$,  
   \begin{eqnarray}\label{Itoproduit}
   {d\left[N^{(1)}_t\cdots N^{(n)}_t\right]}\!\! &= \! \!&\sum_{i=1}^n N_t^{(1)}\cdots N_t^{(i-1)}dN^{(i)}_t N_t^{(i+1)}\cdots N_t^{(n)}\\&+& \sum_{i<j}N_t^{(1)}\cdots N_t^{(i-1)}\left(d<N^{(i)}, N^{(j)}>_t \sharp [N_t^{(i+1)}\cdots N_t^{(j-1)}]\right)N_t^{(j+1)}\cdots N_t^{(n)},\nonumber\end{eqnarray}
  and for any $z \in \mathbb{C}_+$, the resolvent $R_{N_t}(z)=(z-N_t)^{-1}$ of a semi-martingale $N_t$ in the set of Hermitian matrices  at point $z$ satisfies
\begin{equation}\label{Itoresolvante}
dR_{N_t}(z)= R_{N_t}(z) dN_t R_{N_t}(z) + R_{N_t}(z)\big(d<N, N>_t \sharp R_{N_t}(z)\big)R_{N_t}(z).     
\end{equation} 
 Let $k$ be a positive integer number.
In the paper, we will need to imbed sub-martingales $N_t$ from
$M_N(\mathbb{C})$ into $M_{Nk}(\mathbb{C})$:
$$N_t \rightarrow I_k \otimes  N_t.$$
 By simple computations, denoting by $id_k$ the identity map on $M_k(\C)$, it is easy to see that for any $Nk \times Nk$ matrix $V$,
\begin{equation}\label{tensor}\langle (I_k\otimes  N^{(1)}), (I_k \otimes  N^{(2)})\rangle_t\sharp V =\left[id_k \otimes 
    \left(\langle N^{(1)}, N^{(2)}\rangle_t\sharp\right)\right] V,\end{equation} the right hand side of the previous equality  meaning that, if $V=\sum_{i,j=1}^N V^{(i,j)}\otimes E_{ij}$, then  
    $$\left[id_k \otimes 
    \left(\langle N^{(1)}, N^{(2)}\rangle_t\sharp\right)\right] V=\sum_{i,j=1}^N V^{(i,j)}\otimes \left[\langle N^{(1)}, N^{(2)}\rangle_t\sharp E_{ij}\right]. $$
     \paragraph{Stochastic calculus on $GL_N(\mathbb{C})$.} 
  Let $H$\label{calculsto} be the $N \times N$ Wigner   Brownian motion as defined above then for any  $N \times N$ matricial process  $V_t$, we have $$d\langle H ,  H\rangle_t\sharp V_t= \tr_N (V_t)I_Ndt.$$
Moreover, considering the rotated elliptic Brownian motion $(Z_{\lambda,\tau}(t))_{t\geq 0}$  defined in \eqref{R-E-BM}, we have  
\begin{align}
    d\langle Z_{\lambda,\tau} ,  Z_{\lambda,\tau}\rangle_t\sharp V_t &=(\lambda-\tau)\tr_N (V_t)I_Ndt, \label{crochetZ}\\
    d\langle Z_{\lambda,\tau}^* ,  Z_{\lambda,\tau}^*\rangle_t\sharp V_t &=(\lambda-\bar{\tau}) \tr_N (V_t)I_Ndt, \label{crochetZstar}\\
    d\langle Z_{\lambda,\tau} ,  Z_{\lambda,\tau}^*\rangle_t\sharp V_t &=d\langle Z_{\lambda,\tau}^*,Z_{\lambda,\tau}\rangle_t\sharp V_t=
    \lambda \tr_N( V_t)I_Ndt.\label{crochetZZstar}
\end{align}
As explained in Section \ref{Section: idea of proof}, we shall study a multiplicative $(\lambda,\tau)$-Brownian motion $G_{\lambda, \tau}$ satisfying the multiplicative SDE
\begin{equation*}
    dG_{\lambda, \tau}(t)=\sigma \, G_{\lambda, \tau}(t)\big( i \, dZ_{\lambda, \tau}(t) -\frac{\sigma}{2}(\lambda-\tau)\, dt \big) \quad \text{with}\quad G_{\lambda, \tau}(0)=I_N,
\end{equation*}
for some $\sigma >0$. Note that $(G_{\lambda, \tau}^\ast(t))_{t\geq 0}$, $(G_{\lambda, \tau}^{-1}(t))_{t\geq 0}$ and $((G_{\lambda, \tau}^{-1})^\ast(t))_{t\geq 0}$ satisfy respectively the  multiplicative SDEs (see the proof of Proposition~\ref{SDEforinverse} for the SDE of $G_{\lambda, \tau}^{-1}$):
\begin{align*}
  dG_{\lambda, \tau}^\ast(t) &=-i {\sigma}    dZ_{\lambda, {\tau}}^{\ast}(t) G_{\lambda, \tau}^\ast (t) -\frac{\sigma^2}{2}(\lambda-\bar{\tau})G_{\lambda, \tau}^\ast (t) \, dt,
\\ 
  dG_{\lambda, \tau}^{-1}(t) &=-i \sigma  \, dZ_{\lambda, \tau}(t) G_{\lambda, \tau}^{-1}(t) -\frac{\sigma^2}{2}(\lambda-\tau)G_{\lambda, \tau}^{-1}(t) dt ,
  \\
  d(G_{\lambda, \tau}^{-1})^\ast(t) &=i \sigma \, (G_{\lambda, \tau}^{-1})^\ast(t) dZ_{\lambda, {\tau}}^\ast(t)  -\frac{\sigma^2}{2}(\lambda-\bar{\tau})(G_{\lambda, \tau}^{-1})^\ast(t) dt ,
\end{align*}
all initiated at the identity matrix. 

We simplify the notation  and  denote  along the remainder of the paper by  $G_t:=G_{\lambda, \tau}(t), G_t^\ast:=G_{\lambda, \tau}^\ast(t), G_t^{-1}:=G_{\lambda, \tau}^{-1}(t)$ and $G_t^{-1,\ast}:=(G_{\lambda, \tau}^{-1})^\ast(t)$. Using \eqref{compute brackets}, we compute the quadratic covariation terms involving $G_t, G_t^\ast, G_t^{-1}$ and $G_t^{-1,\ast}$  for future reference, as they form the basis for our subsequent computations. 
\begin{lemma}\label{Lemma:Calcul QuadraticCov}
Let $(G_t)_{t\geq 0}$ be the multiplicative $(\lambda,\tau)$-Brownian motion defined above. Then, for any $N \times N$ matricial  process $V_t$, we have
{\small \begin{center}
\begin{tabular}{l|l}
$d\langle G  ,  G \rangle_t \sharp V_t = \sigma^2 (\tau-\lambda) G_t \tr_N (G_t V_t)dt$ & $d\langle G  ,  G^{-1} \rangle_t \sharp V_t = -\sigma^2 (\tau-\lambda)  \tr_N ( V_t)dt$
\\ 
$d\langle G  ,  G^\ast \rangle_t \sharp V_t = \sigma^2 \lambda G_t G_t^\ast\tr_N ( V_t)dt$ & $d\langle G  ,  G^{-1,\ast} \rangle_t \sharp V_t = -\sigma^2 \lambda G_t \tr_N (G_t^{-1,\ast} V_t)dt$\\
$d\langle G^{-1}  ,  G \rangle_t \sharp V_t = -\sigma^2 (\tau-\lambda)  \tr_N ( V_t)dt$ & $d\langle G^{-1}  ,  G^{-1} \rangle_t \sharp V_t = \sigma^2 (\tau-\lambda) G_t^{-1} \tr_N (G_t^{-1} V_t)dt$
\\ 
$d\langle G^{-1}  ,  G^\ast \rangle_t \sharp V_t = -\sigma^2 \lambda  G_t^\ast\tr_N (G_t^{-1} V_t)dt$ & $d\langle G^{-1}  ,  G^{-1,\ast} \rangle_t \sharp V_t = \sigma^2 \lambda  \tr_N (G_t^{-1} V_tG_t^{-1,\ast})dt$\\
$d\langle G^\ast  ,  G \rangle_t \sharp V_t = \sigma^2 \lambda  \tr_N (G_t^\ast V_tG_t)I_Ndt$ & $d\langle G^\ast  ,  G^{-1} \rangle_t \sharp V_t = -\sigma^2 \lambda G_t^{-1} \tr_N ( G_t^\ast V_t)dt$
\\ 
$d\langle G^\ast  ,  G^\ast \rangle_t \sharp V_t = {\sigma}^2 (\bar{\tau}-\lambda)  G_t^\ast\tr_N (G_t^\ast V_t)dt$ & $d\langle G^\ast  ,  G^{-1,\ast} \rangle_t \sharp V_t = -{\sigma}^2 (\bar{\tau}-\lambda)  \tr_N ( G_t^\ast V_t G_t^{-1,\ast})dt$\\
{\small $d\langle G^{-1,\ast}  ,  G \rangle_t \sharp V_t =  -\sigma^2 \lambda G_t^{-1,\ast} \tr_N (G_t V_t)dt$} & {\small
$d\langle G^{-1,\ast}  ,  G^{-1} \rangle_t \sharp V_t = \sigma^2 \lambda G_t^{-1,\ast} G_t^{-1} \tr_N ( V_t)dt$}
\\ 
{\small $d\langle G^{-1,\ast}   ,   G^\ast \rangle_t \sharp V_t = -{\sigma}^2 (\bar{\tau}-\lambda)  \tr_N ( V_t)dt$ } & {\small $d\langle G^{-1,\ast}  ,    G^{-1,\ast} \rangle_t  \sharp V_t = {\sigma}^2 (\bar{\tau}-\lambda)  G_t^{-1,\ast} \tr_N \! ( G_t^{-1,\ast}V_t) dt$}
\end{tabular}

\end{center} }
\end{lemma}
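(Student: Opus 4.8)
The plan is to obtain all sixteen entries at once from a single master formula. Since quadratic covariations only see the local-martingale parts of the semimartingales involved, the bounded-variation (drift) terms in the SDEs recalled just above play no role. Reading off the martingale parts, each of $G_t$, $G_t^\ast$, $G_t^{-1}$ and $G_t^{-1,\ast}$ is a stochastic integral of the form $\int_0^{\cdot} A_u\, dN_u\, B_u$ with $N\in\{Z_{\lambda,\tau},Z_{\lambda,\tau}^\ast\}$; explicitly
$$(A_u,N,B_u)=(i\sigma G_u,\,Z_{\lambda,\tau},\,I_N)\ \text{for }G,\qquad (-i\sigma I_N,\,Z_{\lambda,\tau}^\ast,\,G_u^\ast)\ \text{for }G^\ast,$$
$$(A_u,N,B_u)=(-i\sigma I_N,\,Z_{\lambda,\tau},\,G_u^{-1})\ \text{for }G^{-1},\qquad (i\sigma G_u^{-1,\ast},\,Z_{\lambda,\tau}^\ast,\,I_N)\ \text{for }G^{-1,\ast}.$$

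Next I would feed these into the bilinearity identity \eqref{compute brackets}: for $X=\int A\,dN^{(1)}\,B$ and $Y=\int C\,dN^{(2)}\,D$ one has $d\langle X,Y\rangle_t\sharp V_t=A_t\big[d\langle N^{(1)},N^{(2)}\rangle_t\sharp(B_tV_tC_t)\big]D_t$. Because $N^{(1)},N^{(2)}\in\{Z_{\lambda,\tau},Z_{\lambda,\tau}^\ast\}$, the inner bracket is supplied by \eqref{crochetZ}--\eqref{crochetZZstar}, each of which has the form $d\langle N^{(1)},N^{(2)}\rangle_t\sharp W=c\,\tr_N(W)\,I_N\,dt$ with $c=\lambda-\tau$ when $N^{(1)}=N^{(2)}=Z_{\lambda,\tau}$, $c=\lambda-\bar\tau$ when $N^{(1)}=N^{(2)}=Z_{\lambda,\tau}^\ast$, and $c=\lambda$ in the two mixed cases. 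The master formula therefore collapses to
$$d\langle X,Y\rangle_t\sharp V_t=c\,\tr_N(B_tV_tC_t)\,A_tD_t\,dt.$$
Specializing $(A,B,C,D,N^{(1)},N^{(2)})$ to each of the sixteen ordered pairs drawn from $\{G,G^\ast,G^{-1},G^{-1,\ast}\}$, simplifying the scalar prefactors via $(i\sigma)(i\sigma)=-\sigma^2$, $(i\sigma)(-i\sigma)=(-i\sigma)(i\sigma)=\sigma^2$, $(-i\sigma)(-i\sigma)=-\sigma^2$, and using cyclicity of $\tr_N$, produces precisely the table in the statement. For instance, with $X=Y=G$ (so $c=\lambda-\tau$) one gets $c(i\sigma)^2\tr_N(V_tG_t)G_t=\sigma^2(\tau-\lambda)G_t\tr_N(G_tV_t)$, and with $X=G$, $Y=G^\ast$ (so $c=\lambda$) one gets $\lambda(i\sigma)(-i\sigma)\tr_N(V_t)G_tG_t^\ast=\sigma^2\lambda G_tG_t^\ast\tr_N(V_t)$.

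This is a routine, if slightly lengthy, bookkeeping computation; there is no conceptual obstacle. The only points requiring attention are the sign produced by the two factors $\pm i\sigma$ and the exact placement of the matrix factors --- the grouping $B_tV_tC_t$ sits inside the trace while $A_t(\,\cdot\,)D_t$ multiplies from the outside. One can shorten the work by exploiting the structural symmetries: $G^{-1,\ast}$ has the same left-integral shape as $G$ (with $Z_{\lambda,\tau}$ replaced by $Z_{\lambda,\tau}^\ast$, equivalently $\tau$ by $\bar\tau$) while $G^{-1}$ has the same right-integral shape as $G^\ast$, so the $G^{-1,\ast}$- and $G^{-1}$-rows of the table mirror the $G$- and $G^\ast$-rows; moreover swapping the roles of $X$ and $Y$ merely permutes $(A,B,C,D)\mapsto(C,D,A,B)$ in the collapsed formula, which gives a convenient cross-check on the off-diagonal entries.
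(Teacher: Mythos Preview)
Your proposal is correct and follows exactly the approach the paper indicates: it states the lemma as a direct computation using \eqref{compute brackets} together with the brackets \eqref{crochetZ}--\eqref{crochetZZstar}, which is precisely the master formula you spell out. Your version is simply a more explicit rendering of the bookkeeping the paper leaves to the reader.
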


 Let $k$ be any positive integer. For the proof, we also need to compute the quadratic covariation terms involving $I_k \otimes G_t, I_k \otimes  G_t^\ast, I_k \otimes  G_t^{-1}$ and $I_k \otimes  G_t^{-1,\ast}$. Using \eqref{tensor}, such quantities can be anticipated based on the computations from Lemma~\ref{Lemma:Calcul QuadraticCov}. To illustrate this clearly, we provide two examples. Recalling the definition of the quadratic covariation in \eqref{def: bracket}, then by simple computations, it is easy to see that for any $Nk \times Nk$ matricial process $V_t$, we have 
\begin{align*}
d\langle (I_k \otimes G)  ,  (I_k \otimes G) \rangle_t \sharp V_t &= \sigma^2 (\tau-\lambda) \Big[(id_k \otimes \tr_N)\big((I_k \otimes G_t)V_t\big) \Big] \otimes G_t dt,\\
d\langle (I_k \otimes G)  ,  (I_k \otimes G^\ast) \rangle_t \sharp V_t &= \sigma^2 \lambda \Big[(id_k \otimes \tr_N)\big(V_t\big) \Big] \otimes G_tG_t^\ast dt.
\end{align*}

\paragraph{Helffer-Sj\"ostrand's representation formula.}
\label{HS}
 Let $f$ be a function of class $C^{k+1}(\mathbb R)$ with compact support and denote by $F_k(f): \mathbb{C}^+ \rightarrow \mathbb{C}$ its so-called almost analytic extension defined as 
 \begin{equation*} 
F_k(f)(x+iy) = \sum_{l=0}^k \frac{(iy)^l}{l!} f^{(l)}(x) \chi(y),
\end{equation*}
where $\chi : \mathbb R \to \mathbb R^+ $ is a smooth compactly supported function such that $\chi \equiv 1$ in a neighborhood of 0. The Helffer-Sj\"ostrand's representation formula yields that
\begin{equation} \label{HS}
 f(u) = \frac{1}{\pi} \int_{\mathbb C} \bar{\partial} F_k(f)(z) (u-z)^{-1} d^2z,
 \end{equation}
where $\bar{\partial} = \frac{1}{2} (\partial_x +i \partial_y)$ and $d^2 z$ is the Lebesgue measure on $\mathbb C$. It is easy to see that the function $F_k(f)$ coincides with $f$ on the real axis, extends it to the complex plane, and satisfies 
\begin{equation*} 
\bar{\partial} F_k(f)(x+iy)=\frac{1}{2}  \frac{(iy)^k}{k!} f^{(k+1)}(x) \chi(y)+ \frac{i}{2} \sum_{l=0}^k \frac{(iy)^l}{l!} f^{(l)}(x) \chi'(y).\end{equation*}
Thus, since $\chi \equiv 1$ in a neighborhood of 0,  we have that, in a neighborhood of the  real axis, 
\begin{equation*}
 \bar{\partial} F_k(f)(x+iy) =\frac{1}{2} \frac{(iy)^k}{k!} f^{(k+1)}(x)  = O(|y|^k) \mbox { as } y \rightarrow 0. \end{equation*} 
\begin{lemma}\label{applicationHS}
Let   $h$ be  a measurable function on $\C\setminus \R$  which satisfies
\begin{equation}\label{nestimgdif}
\vert h(z)\vert \leq Q(\vert \Im  z\vert ^{-1})
\end{equation} 
\noindent for some polynomial $Q$ with nonnegative coefficients and degree $q$. Let  $(f_n)_{n\geq 0}$ be a  sequence of functions    of class $C^{q+1}(\mathbb R)$ with compact supports all included in $]-K,K[$, for some $K>0,$ and such that 
$$\sup_n \sup_{x\in[-K;K]} |f_n^{(p)}(x)| =C_p<+\infty, \text{~for any ~} p\leq q+1.$$
Then, 
there exists a constant $C$ which depends on $h$ only through  the polynomial $Q$ above and on $(f_n)_{n\geq 0}$ through the constant numbers $C_i$'s, such that for all $n$,
$$\left|\int_{\mathbb C} \bar{\partial} F_q(f_n)(z) h(z) d^2z\right|\leq C.$$
\end{lemma}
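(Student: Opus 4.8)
\textbf{Proof plan for Lemma~\ref{applicationHS}.}

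The plan is to split the integral according to whether we are near the real axis or not, and to exploit the two regimes of the bound on $\bar\partial F_q(f_n)$ established just above the statement. First I would fix $K>0$ so that every $f_n$ is supported in $]-K,K[$, and fix the compactly supported cutoff $\chi$ entering the almost analytic extension $F_q$; let $M>0$ be such that $\mathrm{supp}(\chi)\subset[-M,M]$. Then $\bar\partial F_q(f_n)(x+iy)$ vanishes unless $x\in]-K,K[$ and $|y|\le M$, so the integral is over a fixed bounded region $\mathcal R=[-K,K]\times([-M,M])$ independent of $n$. On $\mathcal R$ I would use the explicit formula
\[
\bar\partial F_q(f_n)(x+iy)=\frac12\frac{(iy)^q}{q!}f_n^{(q+1)}(x)\chi(y)+\frac{i}{2}\sum_{l=0}^q\frac{(iy)^l}{l!}f_n^{(l)}(x)\chi'(y),
\]
together with the uniform bounds $\sup_n\sup_{x}|f_n^{(p)}(x)|=C_p$ for $p\le q+1$ and the fixed bounds on $\chi,\chi'$, to obtain a pointwise estimate $|\bar\partial F_q(f_n)(x+iy)|\le C'(|y|^q+\mathbf 1_{|y|\ge \varepsilon_0})$ valid on $\mathcal R$, where $\varepsilon_0>0$ is chosen so that $\chi\equiv 1$ on $[-\varepsilon_0,\varepsilon_0]$ (so that $\chi'$ is supported in $\varepsilon_0\le|y|\le M$); the constant $C'$ depends only on the $C_p$'s and on $\chi$.

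Next I would combine this with the hypothesis $|h(z)|\le Q(|\Im z|^{-1})=\sum_{j=0}^q a_j|y|^{-j}$, $a_j\ge 0$. On the part of $\mathcal R$ where $|y|\ge\varepsilon_0$, both $|h(z)|$ and $|\bar\partial F_q(f_n)(z)|$ are bounded by constants depending only on $Q$, $\varepsilon_0$, the $C_p$'s and $\chi$, and the region has finite area, so that contribution is bounded by a constant uniformly in $n$. On the part where $|y|<\varepsilon_0$, only the first term of $\bar\partial F_q(f_n)$ survives (since $\chi'\equiv0$ there and $\chi\equiv1$), giving $|\bar\partial F_q(f_n)(x+iy)|\le \frac{1}{2q!}|y|^q\,C_{q+1}$, and hence
\[
|\bar\partial F_q(f_n)(z)|\,|h(z)|\le \frac{C_{q+1}}{2q!}|y|^q\sum_{j=0}^q a_j|y|^{-j}=\frac{C_{q+1}}{2q!}\sum_{j=0}^q a_j|y|^{q-j},
\]
which is bounded (each exponent $q-j\ge0$) on $|y|<\varepsilon_0$. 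Integrating this bounded quantity over the bounded region $[-K,K]\times(-\varepsilon_0,\varepsilon_0)$ again yields a constant independent of $n$, depending on $h$ only through the coefficients of $Q$ and on $(f_n)$ only through $K$ and the $C_p$'s. Summing the two contributions and dividing by the harmless factor coming from the normalization gives the claim.

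The only subtlety — and it is a mild one — is matching the degree $q$ of $Q$ with the order of the almost analytic extension $F_q(f_n)$: it is precisely because $F_q$ is built from $q+1$ derivatives that $\bar\partial F_q(f_n)=O(|y|^q)$ near the axis, which is exactly the power needed to absorb the worst singularity $|y|^{-q}$ of $h$. This is why the statement requires $f_n\in C^{q+1}$ and controls derivatives up to order $q+1$; any lower regularity would leave a non-integrable (or here, merely the borderline-integrable but we want a uniform constant) term. No genuine obstacle arises beyond bookkeeping the constants to confirm they depend on the data only in the claimed way.
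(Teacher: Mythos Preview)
Your proof is correct and follows essentially the same approach as the paper: restrict to the compact region determined by the supports of $f_n$ and $\chi$, split according to whether $|y|$ is below or above a threshold on which $\chi'\equiv 0$, use the bounded-region/bounded-$h$ estimate away from the axis, and use $\bar\partial F_q(f_n)=O(|y|^q)$ to absorb the $|y|^{-q}$ singularity of $h$ near the axis. Your exposition is in fact slightly more detailed than the paper's in tracking the constants.
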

\begin{proof}
    The previous integral on $\mathbb{C}$ is actually integral on a compact set included in $[-K,K]\times \mathcal{C}_\chi$ where $\mathcal{C}_\chi$ denotes the support of $\chi$.
Moreover, since $\chi\equiv 1$ around zero, there exists $0<L<1$ such that ${\chi}^{'}\equiv 0$ on $[-L;L]\subset \mathcal{C}_\chi$ and then $\bar{\partial} F_q(f_n)(x+iy)=\frac{1}{2} \frac{(iy)^q}{q!} f_n^{(q+1)}(x)$ on $[-L;L]$. 
Thus,
\begin{eqnarray*}\int_{\mathbb C} \bar{\partial} F_q(f_n)(z) h(z) d^2z
&=&\int_{[-K,K]}\int_{\mathcal{C}_\chi\setminus[-L;L]}\bar{\partial} F_q(f_n)(z) h(z)d^2z\\&&+
\int_{[-K,K]}\int_{[-L;L]}\frac{1}{2}h(x+iy) \frac{(iy)^q}{q!} f_n^{(q+1)}(x)dx dy,\end{eqnarray*}
with $|h(z)|\leq Q(1/L) $ for all $z \in [-K,K]\times\mathcal{C}_\chi\setminus[-L;L]$ and $|h(x+iy)|\leq  c_Q \frac{1}{|y|^q}
$ for all $z=x+iy \in [-K,K]\times([-L;L]\setminus\{0\})$, where $c_Q$ is a constant only  depending on $Q$. The result readily follows.
\end{proof}

 \section{Idea of the proof}\label{Section: idea of proof}
 In this section, we outline the key points of the proof.
 Let us first explain how, by  using the fact that the processes have independent and stationary multiplicative increments,  establishing the multi-time statement in Theorem \ref{maintheo}, is reduced to   proving the statements for a fixed time $t \geq 0$. With this aim, we let, for any $\ell=1,\dots, p$,  $(Z_t^{(\ell)})_{t\geq0}:=(Z_{\lambda, \tau}^{(\ell)}(t))_{t\geq0}$ be a family of independent rotated elliptic Brownian motions and consider the  multiplicative $(\lambda,\tau)$-Brownian motions $(G_t^{(\ell)})_{t\geq0}:=(G_{\lambda, \tau}^{(\ell)}(t))_{t\geq0}$ defined for any $t\geq 0$ as the solution of the  stochastic differential equation 
\begin{equation}\label{Stoch Diff Eq sigma}
    dG_t^{(\ell)}=\sigma_\ell \, G_t^{(\ell)} \big( i \, dZ_t^{(\ell)}  -\frac{\sigma_\ell}{2}(\lambda-\tau)\, dt \big) \quad \text{with}\quad G_0^{(\ell)}=I_N,
\end{equation}
for some positive real number $\sigma_\ell$. Similarly,  we let $(z_t^{(\ell)})_{t\geq0}:=(z_{\lambda, \tau}^{(\ell)}(t))_{t\geq0}$ be a family of freely independent free rotated elliptic Brownian motions and consider the free multiplicative $(\lambda,\tau)$-Brownian motions $(g_t^{(\ell)})_{t\geq0}:=(g_{\lambda, \tau}^{(\ell)}(t))_{t\geq0}$ defined for any $t\geq 0$ as the solution of the free stochastic differential equation 
\begin{equation}\label{free Stoch Diff Eq with sigma}
    dg_t^{(\ell)} =\sigma_\ell \, g_t^{(\ell)} \big( i \, dz_t^{(\ell)}  -\frac{\sigma_\ell}{2}(\lambda-\tau)\, dt \big) \quad \text{with}\quad g_t^{(\ell)} =I_\A,
\end{equation}
for the same choice of $\sigma_\ell$ as in \eqref{Stoch Diff Eq sigma}.
Finally, for the simplicity of notation, we denote by $\G_t$ and $\g_t$ the $p$-tuples defined for any $t\geq 0$ by
\begin{equation*}
    \G_t:= \big(G_t^{(1)} , \dots, G_t^{(p)} \big)
    \quad \text{and} \quad
    \g_t:= \big(g_t^{(1)} , \dots, g_t^{(p)} \big).
\end{equation*}

 Without loss of generality, we can assume that the times in Theorem \ref{maintheo} are ordered with $t_0=0\leq t_1 \leq t_2 \leq \ldots \leq t_p$. Now, let us set the time-increments $\sigma_\ell^2=t_{\ell}-t_{\ell-1}$ for any $1\leq \ell\leq p$. By a simple change of time in the stochastic equation, we know that $G^{(\ell)}_1$ has the same distribution as $G_{ t_{\ell}-t_{\ell-1}}$. Because the multiplicative increments of the process $(G_t)_{t\geq 0}$ are stationary and independent, it yields that
$(G^{(\ell)}_1)_{1\leq \ell \leq p}$ has the same distribution as $((G_{ t_{\ell-1}})^{-1}G_{ t_{\ell}})_{1\leq \ell \leq p}$.
The same properties hold dealing with the free multiplicative $(\lambda,\tau)$-Brownian motion provided we replace independence by freeness.
Thus, this reduces the proof of Theorem \ref{maintheo-sp} to establishing  the following result for fixed $t\geq 0$.
 \begin{theorem}\label{theo: Strong cov at time t}
 Let $A^N:=(A_1^N,  \dots, A_r^N,A_1^{N\ast},  \dots, A_r^{N\ast})$  be an $r$-tuple of $N \times N$ deterministic matrices that are uniformly bounded over $N$. Let $t\geq 0$. Then, for any $\delta>0$ and   any noncommutative self-adjoint polynomial $Q$ in $4p+2r$ indeterminates, almost surely,  we have for $N$ large  enough
$$sp\big(Q(\G_t,\G_t^\ast,\G_t^{-1}, \G_t^{-1,\ast},A^N)\big)
\subset sp\big(Q(\g_t,\g_t^\ast,\g_t^{-1}, \g_t^{-1,\ast},A^N)\big) +(-\delta, \delta).$$
 \end{theorem}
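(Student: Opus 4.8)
The plan is to reduce the spectral inclusion to a quantitative estimate on smooth traces, following the multiplicative interpolation philosophy. Fix $t\geq 0$, a self-adjoint polynomial $Q$, and $\delta>0$. Write
$$
P_N := Q(\G_t,\G_t^\ast,\G_t^{-1},\G_t^{-1,\ast},A^N), \qquad
p := Q(\g_t,\g_t^\ast,\g_t^{-1},\g_t^{-1,\ast},A^N),
$$
both self-adjoint in the respective $C^\ast$-algebras. We must show that, almost surely, for $N$ large, $\mathrm{sp}(P_N)\subset \mathrm{sp}(p)+(-\delta,\delta)$. Since $\varphi$ is faithful (Theorem \ref{freeproduct}), $\mathrm{sp}(p)=\mathrm{supp}(\mu_p)$. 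By a standard argument (choose $f\in C^\infty_c(\mathbb R)$ with $f\equiv 1$ on a $\delta/2$-neighborhood of $\mathrm{sp}(p)$ and $\mathrm{supp}(f)$ inside a $\delta$-neighborhood, then consider $g=(1-f)^2\geq 0$), it suffices to show that for every such smooth test function, $\tr_N\!\big(g(P_N)\big)\to \varphi\!\big(g(p)\big)=0$ almost surely, combined with an a priori uniform operator-norm bound $\sup_N\|P_N\|<\infty$ (which holds since the $A_i^N$ are uniformly bounded and the processes $G_t^{(\ell)}$, being governed by Lipschitz SDEs, have moments of all orders and norms bounded in probability; one passes through polynomial moment bounds to control the norm). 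The a.s.\ statement follows from the expectation statement plus the variance estimate of Theorem \ref{theo: variance estimate} (the $O(1/N^2)$ rate gives summability along any subsequence, and monotonicity/continuity arguments upgrade this to all $N$), so the heart of the matter is:
$$
\E\big[\tr_N(g(P_N))\big] \;\xrightarrow[N\to\infty]{}\; \varphi(g(p)) \qquad \text{for every } g\in C^\infty_c(\mathbb R),\ g\geq 0.
$$

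To prove this convergence of smooth traces, I would use the Helffer--Sj\"ostrand formula \eqref{HS} to write $g(P_N) = \frac1\pi\int_{\mathbb C}\bar\partial F_k(g)(z)\,(P_N-z)^{-1}\,d^2z$ and reduce everything to controlling the resolvent $(P_N-z)^{-1}$ for $z\in\mathbb C\setminus\mathbb R$, with the loss in powers of $|\Im z|^{-1}$ absorbed by Lemma \ref{applicationHS}. To handle the polynomial $Q$ and its arguments $\G_t^{-1}$ etc.\ uniformly, I would apply the linearization trick: embed $P_N-z$ into a matrix pencil $L_N(z)\in M_k(\mathbb C)\otimes \mathcal A_N$ that is \emph{affine} in the entries $G_t^{(\ell)},G_t^{(\ell)\ast},G_t^{(\ell),-1},G_t^{(\ell),-1,\ast},A_i^N$, so that inverting $L_N(z)$ is equivalent to inverting $P_N-z$; the tensor quadratic-covariation formulas \eqref{tensor} and the two worked examples after Lemma \ref{Lemma:Calcul QuadraticCov} are precisely set up for this $M_k\otimes M_N$ bookkeeping. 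The core analytic step is then to compare $\E[(id_k\otimes\tr_N)(L_N(z)^{-1})]$ with $(id_k\otimes\varphi)(L(z)^{-1})$, where $L(z)$ is the corresponding free pencil. This is where the \emph{multiplicative} interpolation enters: rather than interpolating additively between $G_t^{(\ell)}$ and $g_t^{(\ell)}$, one runs, for $s\in[0,1]$, a process that on $[0,s]$ follows the free multiplicative $(\lambda,\tau)$-Brownian motion and on $[s,1]$ the matrix one (exploiting the semigroup/free-independence-of-increments structure so that the two pieces compose), differentiates the quantity $\E[(id_k\otimes\tr_N\otimes\varphi)(\text{resolvent of interpolated pencil})]$ in $s$, applies It\^o's formula \eqref{Itoproduit}--\eqref{Itoresolvante} together with the covariation table in Lemma \ref{Lemma:Calcul QuadraticCov}, and shows the $s$-derivative is $O(1/N^2)$ after integration against $\bar\partial F_k(g)$. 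Crucially, at each differentiation one must choose whether to interpolate against a \emph{left}-invariant or \emph{right}-invariant multiplicative Brownian motion so that the It\^o second-order terms pair up into symmetric combinations $X\tr_N(Y\,\cdot\,) + \tr_N(\cdot\,)$-type expressions that telescope; this is the multiplicative analogue of the free Ornstein--Uhlenbeck interpolation of \cite{CGParraud,ParraudGUE}.

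The main obstacle, I expect, is precisely this symmetry management in the multiplicative setting. In the additive GUE case the covariance operator $\xi\mapsto \tr_N(\xi)I_N$ is self-adjoint and appears symmetrically, so the interpolation derivative collapses cleanly. Here, the covariation brackets for $G_t$ (first line of Lemma \ref{Lemma:Calcul QuadraticCov}) produce asymmetric terms of the form $\sigma^2(\tau-\lambda)G_t\tr_N(G_t V_t)$, and the presence of the inverses $G_t^{-1},G_t^{-1,\ast}$ means the pencil $L_N(z)$ is affine only after one commits to treating these as four independent "letters'' — which forces one to carry along their SDEs (displayed just before Lemma \ref{Lemma:Calcul QuadraticCov}) and to verify that the linearization respects all sixteen cross-brackets. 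Making the interpolation derivative genuinely telescope therefore requires interpolating with a left-invariant Brownian motion on some factors and a right-invariant one on others within the \emph{same} computation, and checking that the resulting boundary/remainder terms are $O(1/N^2)$; controlling the polynomial-in-$|\Im z|^{-1}$ growth of these remainder terms so that Lemma \ref{applicationHS} applies is the other delicate point. A secondary technical nuisance is justifying all stochastic-calculus manipulations despite the lack of a uniform deterministic norm bound on $G_t$ and $G_t^{-1}$ — handled by a stopping-time/truncation argument and $L^p$ moment bounds, then passing to the limit.
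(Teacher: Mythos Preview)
There is a genuine gap in your reduction. You claim it suffices to show that $\tr_N(g(P_N))\to 0$ almost surely for a smooth bump $g$ supported in the complement of a $\delta$-neighbourhood of $\mathrm{sp}(p)$. This is false: a single stray eigenvalue of $P_N$ in the support of $g$ contributes only $O(1/N)$ to $\tr_N(g(P_N))$, so $\tr_N(g(P_N))\to 0$ is perfectly compatible with $\mathrm{sp}(P_N)\not\subset \mathrm{sp}(p)+(-\delta,\delta)$ for all $N$. What one actually needs is $N\cdot \tr_N(g(P_N))\to 0$ almost surely. In the paper this is achieved by combining two quantitative bounds: the fundamental resolvent estimate of Theorem~\ref{theo:fundamental} gives $\E[\tr_N(\psi_{N,\delta}(P_N))]=O(1/N^2)$ (equation~\eqref{psi2}), and a refined variance bound gives $\Var[\tr_N(\psi_{N,\delta}(P_N))]=O(1/N^4)$ (equation~\eqref{variancefdelta}). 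Note that Theorem~\ref{theo: variance estimate} alone yields only $O(1/N^2)$ for the variance, which is not enough; the improvement to $O(1/N^4)$ for the specific test function $\psi_{N,\delta}$ comes from applying Theorem~\ref{theo:fundamental} to the \emph{product} of resolvents appearing in Lemma~\ref{cov} and then using that $f_{N,\delta}'$ vanishes on $\mathrm{sp}(PP^*(\g_t,\ldots,A^N))$. This is precisely why Theorem~\ref{theo:fundamental} is stated for general products $R^{(1)}Q^{(1)}\cdots R^{(n)}Q^{(n)}$ rather than a single resolvent. Your proposal does not provide any mechanism to reach these rates. Relatedly, the claimed a~priori bound $\sup_N\|P_N\|<\infty$ is not available (the norm of $G_t$ is not uniformly bounded almost surely a~priori) and plays no role in the paper's argument.

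On the technical approach, two further differences are worth noting. First, the paper deliberately \emph{avoids} the linearization trick; Theorem~\ref{theo:fundamental} works directly with products of resolvents and monomials. Second, the interpolation is not between $\G_t$ and $\g_t$ directly. Instead, following Collins--Guionnet--Parraud, one embeds $\G_t$ as $I_k\otimes \G_t$ in $M_{Nk}(\mathbb C)$, introduces an independent multiplicative $(\lambda,\tau)$-Brownian motion $\mathcal K_t$ on $GL_{Nk}(\mathbb C)$, and interpolates multiplicatively via $(I_k\otimes \cG_{t-u})\,\cK_u$; the free object $\g_t$ is recovered only at the end by letting $k\to\infty$. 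The left/right-invariance choices you anticipate are indeed essential (Propositions~\ref{step1} and~\ref{prop:est covariance}), but they operate inside this dimensional interpolation, and the resulting remainders are of the form $\tr_{Nk}(M)\tr_{Nk}(\tilde M)-\tr_{Nk}\big(\tilde M[(id_k\otimes\tr_N)(M)\otimes I_N]\big)$, which are then converted into covariances of order $(Nk)^{-2}$ via the trick of Lemma~\ref{trick}.
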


To proof Theorem \ref{theo: Strong cov at time t},  we use the   idea 
also present in the approach of \cite{haagerup2005new}, which consists in understanding the spectrum  by  obtaining a sharp estimate  of the mean Stieltjes transform of the empirical spectral measure as we explain now.
 We let $\rho\in{\cal C}^\infty(\mathbb R,\mathbb R)$ be such that $\rho\geq 0$, its support is included in $[-1,1]$ and $\int_\mathbb R \rho (x)\,{\rm d}x=1$.  Now,  we let $f_{N,\delta}$ be the function defined for any $x\in\mathbb{R}$ by
\begin{equation}\label{def:f delta}
  f_{N,\delta} (x)=\int _\mathbb{R} \1 _{\K_N(\delta)}(y)\rho _{\frac{\delta }{2}}(x-y)\,{\rm d}y,  
\end{equation}
where
\begin{equation}\label{def:K(delta)}
\K_N(\delta )=\{x \in \R \, | \, {\rm dist}\big(x,{\rm sp}(PP^*(\g_t,\g_t^\ast,\g_t^{-1}, \g_t^{-1,\ast}, A^N))\big)\leq\delta\}. \end{equation}
and
$\rho _{\frac{\delta }{2}}(x)=\frac{2}{\delta}\rho\left(\frac{2x}{\delta }\right)
$. Note that the function $f_{N,\delta}$ is in ${\cal C}^\infty (\mathbb{R}, \mathbb{R})$, $f_{N,\delta}\equiv 1$ on $\K_N(\frac{\delta }{2})$ and its support is included in $\K_N(2\delta )$. Moreover, since there exists \( C > 0 \) such that the spectrum of $PP^*(\g_t,\g_t^\ast,\g_t^{-1}, \g_t^{-1,\ast},A^N)$ is contained in \([ -C, C ]\), the support of \( f_{N,\delta} \) is contained in \([ -C-2, C+2 ]\).
Moreover, for any $p>0$, 
$$\sup_{x\in [-C-2,C+2]} |f_{N,\delta}^{(p)}(x)| \leq 
\sup_{x\in [-C-2,C+2]} \int_{-C-1}^{C+1}|
\rho^{(p)}_{\frac{\delta }{2}}(x-y)| dy\leq C_p(\delta).$$

A key point at this stage is the following estimate which is a corollary of Theorem \ref{theo:fundamental} which we prove in Section \ref{Section:fundamental result}: for any $z \in \mathbb{C}\setminus \mathbb{R}$,
\begin{equation}\label{singleresolvent}\E \tr_{N} \big[\big(z-PP^*(\G_t,\G_t^\ast,\G_t^{-1}, \G_t^{-1,\ast},A^N)\big)^{-1}\big]=\varphi_N \big[\big(z-PP^*(\g_t,\g_t^\ast,\g_t^{-1}, \g_t^{-1,\ast},A^N)\big)^{-1}\big] +O\left(\frac{1}{N^2}\right),\end{equation} with $O(1/N^2) \leq \frac{1}{N^2}{T(|\Im z|^{-1})}$ for some polynomial $T$ with nonnegative coefficients. By the above bound and Lemma \ref{applicationHS} around the Helffer-Sj\"ostrand's representation formula, we infer that 
\begin{equation}\label{est: f delta G - g}
\E {\rm \tr}_N \big[f_{N,\delta}(PP^*(\G_t,\G_t^\ast,\G_t^{-1}, \G_t^{-1,\ast},A^N))\big]-\varphi_N\big[f_{N,\delta}(PP^*(\g_t,\g_t^\ast,\g_t^{-1}, \g_t^{-1,\ast}, A^N))\big]
=O_{\delta }\left(\frac{1}{N^2}\right).
\end{equation}
Since the function $\psi_{N,\delta}\equiv 1-f_{N,\delta}$ also satisfies \eqref{est: f delta G - g} and the fact that $\psi_{N,\delta}\equiv 0$ on the spectrum of ${PP^*(\g_t,\g_t^\ast,\g_t^{-1}, \g_t^{-1,\ast},A^N)}$, we deduce that 
\begin{equation}\label{psi2} 
\E   \tr _N \big[\psi_{N,\delta}(PP^*(\G_t,\G_t^\ast,\G_t^{-1}, \G_t^{-1,\ast},A^N))\big]=O_{\delta }\left(\frac{1}{N^2}\right).
\end{equation} 
We  further control the variance and obtain that
\begin{equation}\label{variancefdelta}
\Var{\left[\tr _N(h_N(PP^{*}(\G_t,\G_t^\ast,\G_t^{-1}, \G_t^{-1,\ast}, A^N)))\right]}
=O\left(\frac{1}{N^4}\right),
\end{equation}
for $h_N=\psi_{N,\delta}$ and $h_N=f_{N,\delta}$ for any $0<\delta<1$. The proof of \eqref{variancefdelta} also relies on Theorem \ref{theo:fundamental} and Helffer-Sj\"ostrand's representation formula. Section \ref{Section:Estimating Variance} is dedicated to its proof, where the estimates are explicitly computed and illustrated. Note that controlling the variance is the main motivation for establishing a more general estimate than \eqref{singleresolvent} in Theorem \ref{theo:fundamental}. Now, denoting by
\[\Psi_{N, \delta }:=\tr _N (\psi_{N,\delta} (PP^*(\G_t,\G_t^\ast,\G_t^{-1}, \G_t^{-1,\ast}, A^N)))
\quad \text{and} \quad
\Omega _{N, \delta }=\{ \left| \Psi_{N, \delta}-\E \left(\Psi_{N, \delta }\right)\right| > N^{-4/3}\},
\]
then by Chebyshev's inequality and the above variance estimate, we obtain that $$
\mathbb{P}(\Omega _{N, \delta })
=O_{\delta }\left(N^{-4/3}\right)
.$$
By Borel-Cantelli lemma, we deduce that, almost surely for all large $N$, 
\begin{equation}\label{concentrezn} 
\left|\Psi_{N, \delta }-\E \left(\Psi_{N, \delta }\right)\right| \leq N^{-4/3}.
\end{equation}
From \eqref{psi2} and \eqref{concentrezn}, we deduce that there exists some constant $C_\delta$ such that, almost surely for all large $N$,
$$
\left| \Psi_{N, \delta }\right|\leq C_\delta N^{-2}+ N^{-4/3}=N^{-1}( C_\delta N^{-1} +N^{-1/3}).
$$
 Since \( \psi_{N, \delta} \geq \mathbf{1}_{\mathbb{R} \setminus \K_N(2\delta)} \), it follows that, almost surely, for all large \( N \), the number of eigenvalues of \( PP^*(\G_t,\G_t^\ast,\G_t^{-1}, \G_t^{-1,\ast},A^N) \) that lie in \( \mathbb{R} \setminus \K_N(2\delta) \) is less than \( \left( N^{-1/3} + C_\delta N^{-1} \right) \). Thus, almost surely, for all large \( N \), the number of eigenvalues of \( PP^*(\G_t,\G_t^\ast,\G_t^{-1}, \G_t^{-1,\ast},A^N) \) in \( \mathbb{R} \setminus \K_N(2\delta) \) must be zero. Therefore, almost surely, for all large \( N \), the spectrum of \( PP^{*}(\G_t,\G_t^\ast,\G_t^{-1}, \G_t^{-1,\ast},A^N) \) is contained in \( \K_N(2\delta)\).\\
 
While the primary challenge and contribution of this work lie in proving strong convergence, the following weak convergence must first be established as a crucial step toward this goal.
\begin{theorem}\label{th:weakconv}Let $P_1,\ldots,P_n$ be $n$ noncommutative polynomials in $4p$ noncommutative indeterminates, and let $t\geq 0$. There exists a constant $C>0$ such that, for all $0\leq s \leq t$, we have
$$\left|\mathbb{E}\left[\prod_{i=1}^n\tr_N(P_i(\G_s,\G_s^\ast,\G_s^{-1}, \G_s^{-1,\ast}))\right]-\prod_{i=1}^n\varphi\left[P_i(\g_s,\g_s^\ast,\g_s^{-1}, \g_s^{-1,\ast})\right]\right|\leq \frac{C}{N^2}.$$
In particular, by Borel-Cantelli, for any noncommutative polynomial $P$ in $4p$ noncommutative indeterminates, and any time $t\geq 0$, we have almost surely the convergence
$$\lim_{N\to \infty}\tr_N(P(\G_s,\G_s^\ast,\G_s^{-1}, \G_s^{-1,\ast}))=\varphi\left[P(\g_s,\g_s^\ast,\g_s^{-1}, \g_s^{-1,\ast})\right].$$
\end{theorem}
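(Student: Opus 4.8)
The idea is to compare, via It\^o calculus and a linear Gr\"onwall estimate, the $s$--evolution of $\E[\prod_i\tr_N(P_i(\G_s,\G_s^\ast,\G_s^{-1},\G_s^{-1,\ast}))]$ with that of its free analogue $\prod_i\varphi[P_i(\g_s,\g_s^\ast,\g_s^{-1},\g_s^{-1,\ast})]$. One first reduces by multilinearity to the case where $P_1,\dots,P_n$ are monomials and fixes an integer $D\geq\sum_i\deg P_i$. For each of the \emph{finitely many} ordered tuples $\underline M=(M_1,\dots,M_m)$ of monomials in the $4p$ letters with $\sum_k\deg M_k\leq D$, set $v_N^{\underline M}(s)=\E[\prod_{k}\tr_N(M_k(\G_s,\G_s^\ast,\G_s^{-1},\G_s^{-1,\ast}))]$ and $v_\infty^{\underline M}(s)=\prod_{k}\varphi(M_k(\g_s,\g_s^\ast,\g_s^{-1},\g_s^{-1,\ast}))$, and assemble them into finite vectors $\vec v_N(s)$ and $\vec v_\infty(s)$ (using $\tr_N(I_N)=\varphi(I_\A)=1$, so that a constant monomial in some tuple does no harm). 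A preliminary ingredient, which I would establish first, is the uniform a priori bound $\sup_N\sup_{0\le s\le t}\E[\tr_N(WW^\ast(\G_s,\dots))]<\infty$ for monomials $W$ of degree $\le D$; since the SDE \eqref{Stoch Diff Eq sigma} and the companion equations for $G^\ast,G^{-1},G^{-1,\ast}$ are \emph{linear} in the process, this follows from a routine It\^o--Gr\"onwall argument, as in~\cite{kemp2016large,kemp2017heat}. It ensures that the $v_N^{\underline M}$ are uniformly bounded on $[0,t]$ and that the martingales appearing below are genuine martingales.

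\textbf{The evolution equation.} The heart of the proof is to show that $\vec v_N$ and $\vec v_\infty$ solve, respectively,
\[\tfrac{d}{ds}\vec v_N(s)=A\,\vec v_N(s)+\tfrac1{N^2}\vec b_N(s)\qquad\text{and}\qquad\tfrac{d}{ds}\vec v_\infty(s)=A\,\vec v_\infty(s),\]
for one and the same matrix $A$ (independent of $N$), with $\vec b_N$ uniformly bounded on $[0,t]$. Applying It\^o's formula \eqref{Itoproduit} to a single $\tr_N(M(\G_s,\dots))$, using the four SDEs and the covariations of Lemma~\ref{Lemma:Calcul QuadraticCov} (and independence of the families for different $\ell$): the drifts coming from the $-\tfrac{\sigma_\ell}{2}(\lambda-\tau)\,ds$ parts only rescale $M$ while keeping it a monomial of the same degree, whereas each quadratic covariation produced by \eqref{Itoproduit} replaces two matrix factors of $M$, together with the letters between them, by a single matrix factor (or by $I_N$) times a scalar $\tr_N(\text{monomial})$ --- i.e.\ \emph{a trace is split in two without any increase of the total degree}. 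Feeding this into the It\^o product rule for $\prod_k\tr_N(M_k(\G_s,\dots))$ reproduces these ``diagonal'' contributions and adds the off-diagonal covariations $d\langle\tr_N(M_k(\G_s,\dots)),\tr_N(M_{k'}(\G_s,\dots))\rangle$, $k<k'$; a short computation using \eqref{crochetZ}--\eqref{crochetZZstar} shows each of these equals $\tfrac1{N^2}$ times a trace of a monomial of total degree $\le D-2$ (each of the two normalized traces contributes a $1/N$, and the $1/N$ in the covariance of $Z_{\lambda,\tau}$ is compensated by the leftover $\Tr=N\tr_N$). Taking expectations kills the martingale parts and gives the first system, with $A$ built from the $N$--independent constants $\lambda,\tau,\sigma_\ell$ and $\vec b_N$ a bounded linear combination of the $v_N^{\underline M}$'s. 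Repeating the computation with the free SDE \eqref{free Stoch Diff Eq with sigma} --- the free It\^o calculus for the rotated elliptic Brownian motion is entirely parallel, with $\varphi$ in place of $\tr_N$, and there are \emph{no} off-diagonal corrections because each $\varphi(M_k(\g_s,\dots))$ is already a scalar --- yields the second system with the same $A$.

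\textbf{Conclusion.} At $s=0$ every process is the identity, so $\vec v_N(0)=\vec v_\infty(0)=(1,\dots,1)$, and Duhamel's formula gives $\vec v_N(s)-\vec v_\infty(s)=\tfrac1{N^2}\int_0^s e^{(s-u)A}\vec b_N(u)\,du$, whence $\sup_{0\le s\le t}\|\vec v_N(s)-\vec v_\infty(s)\|=O(1/N^2)$; the coordinate $\underline M=(P_1,\dots,P_n)$ gives the first assertion. For the ``in particular'' part, apply this with $n=1$ to obtain $\E\tr_N(P(\G_s,\dots))=\varphi(P(\g_s,\dots))+O(1/N^2)$, and with $n=2$, $(M_1,M_2)=(P,P^\ast)$, together with $\overline{\tr_N(P(\G_s,\dots))}=\tr_N(P^\ast(\G_s,\dots))$, to obtain $\Var(\tr_N(P(\G_s,\dots)))=O(1/N^2)$; Chebyshev's inequality and the summability of $\sum_N N^{-2}$ then give the almost sure convergence by Borel--Cantelli.

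\textbf{The main obstacle.} The delicate point is the combinatorial bookkeeping in the second paragraph: one must check that the multiplicative structure of the SDEs forces every It\^o correction to \emph{split} a trace rather than \emph{lengthen} a monomial, so that the family of moment functions truncates at total degree $D$ and closes into a finite-dimensional linear ODE --- otherwise the hierarchy would be infinite and Gr\"onwall inapplicable. The uniform a priori moment bound of the first paragraph is the other technical ingredient, needed both to make the martingale terms true martingales and to keep $\vec b_N$ bounded uniformly in $N$; it is standard, the linearity of the SDEs in the process making all matrix moments finite and uniformly controlled on $[0,t]$.
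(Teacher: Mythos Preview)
Your proposal is correct and follows essentially the same approach as the paper: the paper formalizes your ``vectors of moment functions'' via the algebra of \emph{pure trace polynomials} and shows (Lemma~\ref{Generator}, Lemma~\ref{freeGenerator}) that the generator splits as $\Delta+\tfrac{1}{N^2}\tilde\Delta$ on the finite-dimensional graded piece $E_d$, with $\Delta$ matching the free evolution --- exactly your matrix $A$ and remainder $\vec b_N$ --- and then concludes by Duhamel (Proposition~\ref{prop:convergence}). One small correction: the cross-trace covariation (your ``off-diagonal'' term, the paper's $\tilde\Delta$) \emph{merges} two traces into one while preserving the total degree, so the resulting tuple has total degree $\le D$ rather than $\le D-2$; this does not affect your closure argument.
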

\begin{proof}
    The proof follows \cite{cebron2013free}, and is postponed to Section~\ref{proofofweakconv}.
\end{proof}

  
 The core of this paper lies in the proofs of Theorems \ref{theo: variance estimate} and \ref{theo:fundamental}. In particular, Theorem \ref{theo:fundamental} serves as a key point, where we establish a sharp estimate for the mean trace of an arbitrary product of resolvents and monomials involving independent tuples of multiplicative Brownian motions and a bounded sequence of deterministic matrices. The proof of Theorem \ref{theo:fundamental} builds on ideas from the interpolation technique developed by Collins, Guionnet, and Parraud \cite{CGParraud}, later extended by Parraud \cite{ParraudHaar, ParraudGUE, parraud-HaarTensor}, to derive asymptotic expansions for smooth functions of polynomials in deterministic matrices and some iid matrices. In particular, this framework enables the analysis of the operator norm. 
  In the approach we follow, we notice first that, for any $k$, the distribution of  $I_k\otimes  \G_t$ in $\left(M_{Nk}(\mathbb{C}), \tr_{Nk}\right)$ coincides with  the  distribution of  $  \G_t$ in $\left(M_{N}(\mathbb{C}), \tr_{N}\right)$ and we view $\g_t$  as the asymptotic distribution of  an independent tuple of Brownian motions $K_t$ in $GL_{Nk}(\mathbb{C})$ as $k\rightarrow \infty$. The idea is then to interpolate between $I_k\otimes  \G_t$ and $K_t$ and let $k$ grow to infinity. 
  
  Note that another interpolation technique was introduced in~\cite{chen2024newapproachstrongconvergenceI}, but it does not apply to our setting since the expected trace of a polynomial in the multiplicative Brownian motion is not a rational function of $1/N$. For example, in the case $(\lambda, \tau) = (1,0)$, it follows from~\cite[Example 3.5]{driver2013large} that  
\[
\mathbb{E}[\tr_N(G_t^2)] = e^{-t} \cosh(t/N) - e^{-t}N \sinh(t/N).
\]  

 We end this section by explaining how one can deduce Corollary \ref{maintheo} from Theorem \ref{maintheo-sp}. First of all, 
it is sufficient to prove that for any polynomial the convergence of the norm  holds almost
surely (instead of almost surely the convergence holds for all polynomials) by a usual approximation argument using polynomials with rational coefficients.
 
 In the absence of deterministic matrices $A^N$, using the same argument explained at the very beginning of this section, the first multi-time statement of  Corollary \ref{maintheo} readily follows from  Theorem~\ref{th:weakconv}. Now, using the invariance of $G$ by unitary conjugation and applying the asymptotic freeness of unitarily invariant matrices from deterministic matrices (see for example \cite[Theorem 3.5]{cebron2022freeness}) to the imaginary parts and real parts of  the $G_{t_i}$'s and $G_{t_i}^{-1}$'s, we can deduce the almost sure asymptotic freeness of $(G_{t_i}, G_{t_i}^{-1}; i=1,\ldots,p)$ and $A^N$.
By following the proof of Lemma 7.2 in \cite{haagerup2005new}, one can then prove  that  almost surely 
\begin{eqnarray*}\lefteqn{\liminf_{N\rightarrow +\infty} \|P(G_{t_1},G_{t_1}^*, G_{t_1}^{-1}, G_{t_1}^{-1,\ast}\ldots,G_{t_p},G_{t_p}^*, G_{t_p}^{-1}, G_{t_p}^{-1,\ast}, A^N)\|}\\&\geq& \| P(g_{t_1},g_{t_1}^*,  g_{t_1}^{-1}, g_{t_1}^{-1,\ast},\ldots,g_{t_p},g_{t_p}^*,  g_{t_p}^{-1}, g_{t_p}^{-1,\ast}, \bf a)\|.\end{eqnarray*}
 According to Pisier \cite{Pisier-StrongCon}, since  $A_N$ converges strongly towards $\bf a$ which is free from  $(g_{t_i}, g_{t_i}^*, g_{t_i}^{-1}, g_{t_i}^{-1,\ast}; i=1,\ldots,p)$, then we can deduce that $(g_{t_1},g_{t_1}^*, g_{t_1}^{-1}, g_{t_1}^{-1,\ast},\ldots,g_{t_p},g_{t_p}^*, g_{t_p}^{-1}, g_{t_p}^{-1,\ast}, A^N)$ converges strongly towards $(g_{t_1},g_{t_1}^*, g_{t_1}^{-1}, g_{t_1}^{-1,\ast},\ldots,g_{t_p},g_{t_p}^*, g_{t_p}^{-1}, g_{t_p}^{-1,\ast}, \bf a)$. Therefore, 
according to Proposition \ref{scCollinsMale},  for large N, $$sp\big(PP^*(g_{t_1},g_{t_1}^*, g_{t_1}^{-1}, g_{t_1}^{-1,\ast},\ldots,g_{t_p},g_{t_p}^*, g_{t_p}^{-1}, g_{t_p}^{-1,\ast}, A^N)\big) \subset sp\big(PP^*(g_{t_1},g_{t_1}^*,\ldots,g_{t_p},g_{t_p}^*,\bf a)\big) + (-\delta, \delta).$$
Finally, by Theorem \ref{maintheo-sp},
almost surely, for $N$ large  enough
\begin{eqnarray*} \lefteqn{
sp\big(PP^*(G_{t_1},G_{t_1}^*, G_{t_1}^{-1}, G_{t_1}^{-1,\ast},\ldots,G_{t_p},G_{t_p}^*, 
G_{t_p}^{-1}, G_{t_p}^{-1,\ast}, A^N)\big) }\\& \subset &sp\big(PP^*(g_{t_1},g_{t_1}^*, g_{t_1}^{-1}, g_{t_1}^{-1,\ast},\ldots,g_{t_p},g_{t_p}^*, g_{t_p}^{-1}, g_{t_p}^{-1,\ast}, {\bf a})\big) + (-2\delta, 2\delta),\end{eqnarray*}
which easily implies 
that \begin{eqnarray*}\lefteqn{\limsup_{N\rightarrow +\infty} \|P(G_{t_1},G_{t_1}^*, G_{t_1}^{-1}, G_{t_1}^{-1,\ast}\ldots,G_{t_p},G_{t_p}^*, G_{t_p}^{-1}, G_{t_p}^{-1,\ast}, A^N)\|}\\&\leq& \| P(g_{t_1},g_{t_1}^*,  g_{t_1}^{-1}, g_{t_1}^{-1,\ast},\ldots,g_{t_p},g_{t_p}^*,  g_{t_p}^{-1}, g_{t_p}^{-1,\ast}, \bf a)\| ~~\text{a.s.}\end{eqnarray*}
 
\section{Estimating the variance}\label{Section:Estimating Variance}
This section is dedicated to the proof of Theorem \ref{theo: variance estimate} in which we obtain a  quantitative estimate on general variances and the proof of the particular case \eqref{variancefdelta}. The statement of Theorem \ref{theo: variance estimate} involves uniformly bounded  deterministic matrices $A^N$. However, we will illustrate the proof for functions in $(\G_t,\G_t^\ast,\G_t^{-1}, \G_t^{-1,\ast})$ only as  $A^N$ does not affect any argument in the proof.      
\paragraph{General case.}
Let $P$ be any noncommutative polynomial in $4p$ noncommutative indeterminates.
Set $\mathcal{G}_t:=(\G_t,\G_t^\ast,\G_t^{-1}, \G_t^{-1,\ast})$.
Then by  Helffer-Sj\"ostrand's representation formula \eqref{HS}, we write for some $k$ that we choose later on:
\begin{align}
&\Var{\left[\tr _N(f(PP^{*}(\mathcal{G}_t)))\right]}\nonumber
\\ & =\frac{1}{\pi^2} \E \left[\int_{\mathbb C} \bar{\partial} F_k(f)(z_1) \tr_N(PP^*(\mathcal{G}_t)-z_1)^{-1} d^2z_1\int_{\mathbb C} \bar{\partial} F_k(f)(z_2)\tr_N (PP^*(\mathcal{G}_t)-z_2)^{-1} d^2z_2\right]\nonumber \\
&-\frac{1}{\pi^2} \E \left[\int_{\mathbb C} \bar{\partial} F_k(f)(z_1) \tr_N(PP^*(\mathcal{G}_t)-z_1)^{-1} d^2z_1\right]\E \left[\int_{\mathbb C} \bar{\partial} F_k(f)(z_2)\tr_N (PP^*(\mathcal{G}_t)-z_2)^{-1} d^2z_2\right]\nonumber
\\& =\frac{1}{\pi^2} \int_{\mathbb C} \int_{\mathbb C}\bar{\partial} F_k(f)(z_1)\bar{\partial} F_k(f)(z_2)\Cov\big[\tr_N(z_1-PP^*(\mathcal{G}_t))^{-1}, \tr_N(z_2-PP^*(\mathcal{G}_t))^{-1}\big]d^2z_1d^2z_2.\label{methodeHS}
\end{align}
This reduces the problem to estimating the covariance of the normalized traces of the resolvent of \(PP^\ast(\mathcal{G}_t)\) at points \(z_1\) and \(z_2\) in \(\mathbb{C}\). To achieve this, we first recall the multiplicative stochastic differential equations in \eqref{Stoch Diff Eq sigma} relative to $\cG_t$ and proceed by interpolation. Specifically, for \(\ell = 1, \dots, p\), we consider independent copies \(\widetilde{Z}_{\lambda,\tau}^{(\ell)}(t)\) and $\hat{Z}_{\lambda,\tau}^{(\ell)}(t)$ of \(Z_{\lambda,\tau}^{(\ell)}(t)\), as defined in \eqref{R-E-BM} and let \(\widetilde{G}_{\lambda,\tau}^{(\ell)}\) and \(\hat{G}_{\lambda,\tau}^{(\ell)}\) be the independent multiplicative \((\lambda,\tau)\)-Brownian motions, defined for any \(t \geq 0\) as the solution of the multiplicative stochastic differential equation 
\begin{equation}\label{stoch diff eq hat and tilde}
  \begin{array}{cc}
  d\widetilde{G}_{\lambda, \tau}^{(\ell)}(t)= \, \big( i\sigma_\ell \, d\widetilde{Z}_{\lambda, \tau}^{(\ell)}(t) -\frac{\sigma_\ell^2}{2}(\lambda-\tau)\, dt \big)\widetilde{G}_{\lambda, \tau}^{(\ell)}(t) \quad &\text{with}\quad \widetilde{G}_{\lambda, \tau}^{(\ell)}(0)=I_N,
    \\ d\hat{G}_{\lambda, \tau}^{(\ell)}(t)= \, \big( i \sigma_\ell\, d\hat{Z}_{\lambda, \tau}^{(\ell)}(t) -\frac{\sigma_\ell^2}{2}(\lambda-\tau)\, dt \big)\hat{G}_{\lambda, \tau}^{(\ell)}(t) \quad &\text{with}\quad G_{\lambda, \tau}^{(\ell)}(0)=I_N. 
  \end{array}
    \end{equation}
Note that the difference between the above stochastic differential equations and the one satisfied by ${G}_{\lambda, \tau}^{(\ell)}(t)$ in \eqref{Stoch Diff Eq sigma} lies in the sense of multiplication, a choice that will become clear later in the proof. Finally, we denote by 
\begin{align*}
 \wtcG_t:=(\wtbfG_t,\wtbfG_t^\ast,\wtbfG_t^{-1}, \wtbfG_t^{-1,\ast})
\quad &\text{with} \quad
\wtbfG_t=(\wtG_{\lambda,\tau}^{(1)},\dots , \wtG_{\lambda,\tau}^{(p)}), 
\\  \text{and} \quad\hat{\mathcal{G}}_t:=(\hbfG_t,\hbfG_t^\ast,\hbfG_t^{-1}, \hbfG_t^{-1,\ast}) 
\quad &\text{with} \quad
\hbfG_t=(\hG_{\lambda,\tau}^{(1)},\dots , \hG_{\lambda,\tau}^{(p)}).
\end{align*}
For any $m\geq 1$ and any $m$-tuples of noncommutative elements $a=(a_1, \dots, a_{m})$ and $b=(b_1, \dots, b_{m})$, we define the product $ab$ to be the $m$-tuple $(a_1 b_1, \dots, a_{m}b_{m})$.

\begin{lemma}\label{cov}
Let $P$ be any noncommutative polynomial in $4p$ noncommutative indeterminates and let $\cG_t, \wtcG_t$ and $\hcG_t$ be defined as above. Then, for any $z_1$, $z_2 \in \mathbb{C}\setminus \mathbb{R}$, the covariance  \[\Cov\big[\tr_N(z_1-PP^*(\mathcal{G}_t))^{-1}, \tr_N(z_2-PP^*(\mathcal{G}_t))^{-1}\big]\]  is a finite sum  of terms of the form 
 \[
 \frac{1}{N^2} \int_0^t \E\tr_N\big[( \widetilde{R}_{t,u}^{z_1})^2 Q_1 (\hat{R}_{t,u}^{z_2})^2 Q_2\big]du,
 \]
 where $Q_1$ and $Q_2$ are monomials in $\cG_u$,
 $\wtcG_{t-u}$ and $\hcG_{t-u}$  while $\widetilde{R}_{t,u}^{z}$ and $\hat{R}_{t,u}^{z}$ are the resolvents defined by 
 \begin{align*}
    \widetilde{R}_{t,u}^{z}:= (z-PP^\ast(\cG_u\wtcG_{t-u} ))^{-1}&  \quad \text{and} \quad \hat{R}_{t,u}^z:=(z- PP^\ast( \cG_u \hcG_{t-u}))^{-1}.
\end{align*}
\end{lemma}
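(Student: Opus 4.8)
\textbf{Proof strategy for Lemma~\ref{cov}.} The plan is to express the covariance as a stochastic integral by interpolating between two independent copies of the process, then use the Itô formula for products of semimartingales to read off the integrand. First I would introduce the interpolating process: for $u\in[0,t]$, replace the tail increments $\cG_{t-u}$ (viewed as left-invariant multiplicative increments attached to $\cG_u$) on the first resolvent by the tilde-increments $\wtcG_{t-u}$, and on the second resolvent by the hat-increments $\hcG_{t-u}$, so that at $u=t$ the two resolvents are evaluated at the same random variable $\cG_t$ (giving the "variance" endpoint) while at $u=0$ they are evaluated at independent copies $\wtcG_t$ and $\hcG_t$ (giving the product of expectations, up to the independence of the two copies). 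Concretely, set $\Phi(u)=\E\big[\tr_N(z_1-PP^\ast(\cG_u\wtcG_{t-u}))^{-1}\,\tr_N(z_2-PP^\ast(\cG_u\hcG_{t-u}))^{-1}\big]$; then $\Phi(t)=\E\tr_N\widetilde R^{z_1}_{t,t}\,\tr_N\hat R^{z_2}_{t,t}$ equals the first term in the covariance, and $\Phi(0)$ equals $\E\tr_N(z_1-PP^\ast(\wtcG_t))^{-1}\cdot\E\tr_N(z_2-PP^\ast(\hcG_t))^{-1}$ since the tilde and hat families are independent and $\wtcG_t,\hcG_t\overset{d}{=}\cG_t$. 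Thus $\Cov=\Phi(t)-\Phi(0)=\int_0^t \Phi'(u)\,du$.

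Next I would compute $\Phi'(u)$ by Itô calculus. Fix $u$ and run the "clock" on the increment pieces: at the infinitesimal level, the change in $\cG_u\wtcG_{t-u}$ as $u$ increases by $du$ comes from two sources — the $\cG$-driving noise (through $d\cG_u$, using the SDE~\eqref{Stoch Diff Eq sigma}) and the removal of an infinitesimal tilde-increment (through the reversed equation for $\wtcG$, which is why~\eqref{stoch diff eq hat and tilde} multiplies on the other side). The drift terms in $d\cG$ and $d\wtcG$ are designed to cancel in expectation against each other, and the martingale parts of $d\cG$ and $d\wtcG$ have the same quadratic variation structure (both are rotated elliptic Brownian motions with the same parameters), so the first-order (single-resolvent-derivative) contributions telescope away. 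What survives in $\frac{d}{du}\E[\cdots]$ are precisely the cross-quadratic-variation terms: the bracket between the $\cG_u$-noise appearing in the first resolvent and the $\cG_u$-noise appearing in the second resolvent. This is where the factor $1/N^2$ is born: each resolvent derivative $dR = R\,(dM)\,R+\ldots$ contributes one resolvent factor, squared when the same noise hits twice via the bracket, and the bracket $d\langle Z^{(\ell)},Z^{(\ell)\ast}\rangle_u\sharp V = \lambda\,\tr_N(V)I_N\,du$ (and its variants from Lemma~\ref{Lemma:Calcul QuadraticCov}) carries one normalized trace $\tr_N=\frac1N\Tr$; coupling the trace in front of the first resolvent with the one in front of the second, via a bracket that itself contains a $\tr_N$, produces the $\frac1N\cdot\frac1N$. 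After applying~\eqref{Itoproduit}, \eqref{Itoresolvante}, \eqref{compute brackets} and the bracket table in Lemma~\ref{Lemma:Calcul QuadraticCov}, each elementary contraction yields a term $\frac{1}{N^2}\E\tr_N\big[(\widetilde R^{z_1}_{t,u})^2 Q_1(\hat R^{z_2}_{t,u})^2 Q_2\big]$ with $Q_1,Q_2$ monomials built from $\cG_u,\wtcG_{t-u},\hcG_{t-u}$ (the monomials arise from the pieces of $PP^\ast$ flanking the differentiated letter and from the matrix factors $G_t,G_t^\ast,\ldots$ produced on the right-hand sides of the bracket formulas). Summing over the finitely many letters of $P$ and $P^\ast$ at which the differentiation can act, and over the finitely many entries of the bracket table, gives the claimed finite sum.

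\textbf{Main obstacle.} The routine part is the bookkeeping; the genuinely delicate point is organizing the interpolation so that the only surviving terms are the "second-order" cross terms and so that they come out in the stated symmetric squared-resolvent form. This forces the asymmetric choice in~\eqref{stoch diff eq hat and tilde}: the process $\cG$ used inside the resolvents must be left-invariant while the tilde/hat tails are glued on compatibly, so that when one differentiates the product of the two traces the $d\cG_u$-martingale appearing in the first resolvent can be bracketed against the $d\cG_u$-martingale in the second resolvent (they are driven by the \emph{same} Brownian motion $Z^{(\ell)}$), while the tilde and hat tails — driven by \emph{independent} Brownians — contribute nothing to the cross bracket and exactly kill, together with the drifts, the diagonal self-bracket and first-order terms. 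One must also check that the resolvent squares $(\widetilde R^{z}_{t,u})^2$ rather than arbitrary powers appear: this is because each trace of a resolvent, differentiated once in the Itô sense, produces exactly $\tr_N[R(dM)R] = \tr_N[R^2 dM]$ after cyclicity, and the bracket contraction consumes the two $dM$'s leaving one $R^2$ on each side. Finally one should record the crude bound $\|(\widetilde R^{z}_{t,u})^2\|\le |\Im z_1|^{-2}$ etc., which is not needed for the statement of this lemma but will be used downstream in~\eqref{methodeHS}; I would defer that to the place where the lemma is applied.
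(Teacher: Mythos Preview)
Your proposal is correct and follows essentially the same interpolation strategy as the paper's proof: set $\Phi(u)=\E\big[\tr_N\widetilde R^{z_1}_{t,u}\,\tr_N\hat R^{z_2}_{t,u}\big]$, write the covariance as $\int_0^t\Phi'(u)\,du$, and show that only the cross quadratic covariation driven by the common noise $\cG$ survives, with the opposite multiplication convention in~\eqref{stoch diff eq hat and tilde} being exactly what makes the drift and self-bracket terms cancel. The one technical difference is that you compute the bracket of the two \emph{scalar} semimartingales $\tr_N\widetilde R^{z_1}_{t,u}$ and $\tr_N\hat R^{z_2}_{t,u}$ directly (using cyclicity to get $\tr_N[R\,dM\,R]=\tr_N[R^2\,dM]$ and then contracting the two $dM$'s), whereas the paper first rewrites the product of traces as $\frac{1}{N^2}\sum_{i,j}\Tr_N\big(E_{ji}\widetilde R^{z_1}E_{ij}\hat R^{z_2}\big)$ and applies the matrix-valued It\^o product rule~\eqref{Itoproduit} to $\widetilde R^{z_1}E_{ij}\hat R^{z_2}$; both computations land on the same expression~\eqref{expressioncondense}, and your route is arguably more transparent about the origin of the squared resolvents and the $1/N^2$.
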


\begin{proof}[Proof of Lemma \ref{cov}]
We start by setting some notation. Fix $t\geq 0$, then for any monomial $Q$ in noncommuting variables and any $u,v\in [0,t]$, we denote by 
\begin{align*}
    \widetilde{Q}_{t,u}:= Q(\cG_u\wtcG_{t-u} )&, \quad \hat{Q}_{t,u}:= Q(\cG_u\hcG_{t-u} ),
    \\ \widetilde{Q}_{t,u,v}:= Q(\cG_v\wtcG_{t-u} )&, \quad \hat{Q}_{t,u,v}:= Q(\cG_v\hcG_{t-u} ),
\end{align*}
  and  define for any $z \in \mathbb{C}^+$,  the associated resolvents 
\begin{align*}
    \widetilde{R}_{t,u}^{z}:= (z-PP^\ast(\cG_u\wtcG_{t-u} ))^{-1}:=R^{z}(\cG_u\wtcG_{t-u} )&, \ \hat{R}_{t,u}^z:=(z- PP^\ast(\cG_u\hcG_{t-u} ))^{-1}:=R^{z}(\cG_u\hcG_{t-u} ),
    \\ \widetilde{R}_{t,u,v}^z:=(z- PP^\ast(\cG_v\wtcG_{t-u} ))^{-1}:=R^{z}(\cG_v\wtcG_{t-u} )&, \ \hat{R}_{t,u,v}^z:=(z- PP^\ast(\cG_v\hcG_{t-u} ))^{-1}:=R^{z}(\cG_v\hcG_{t-u} ).
\end{align*}
We note that $\widetilde{Q}_{t,t}=\hat{Q}_{t,t}=Q(\cG_t)$, while $\widetilde{Q}_{t,0}$ and $\hat{Q}_{t,0}$ are two independent copies of $Q(\cG_t)$ thanks to Proposition~\ref{samelaw}. With this at hand, we write for any $z_1,z_2 \in \mathbb{C}$,
\begin{equation*}
\Cov\big[\tr_N(z_1-PP^*(\mathcal{G}_t))^{-1}, \tr_N(z_2-PP^*(\mathcal{G}_t))^{-1}\big] 
= \int_0^t h'(u) du,
\end{equation*}
where \[h(u)=\E \big[\tr_N\big(R^{z_1}( \cG_u \wtcG_{t-u})\big)\tr_N\big(R^{z_2}(\cG_u\hcG_{t-u} )\big)\big].\]
Computing the derivative, we obtain  
\begin{equation}\label{h' in Covariance term}
    \begin{array}{cc}
   h'(u)& = \frac{d}{dv}_{|_{v=u}}\E \big[ \tr_N\big(R^{z_1}(\cG_v\wtcG_{t-u} )\big)\tr_N\big(R^{z_2}(\cG_v\hcG_{t-u} )\big)\big]
  \\ &\quad -\frac{d}{dv}_{|_{v=t-u}}\E \big[ \tr_N\big(R^{z_1}(\cG_u\wtcG_{v} )\big)\tr_N\big(R^{z_2}(\cG_u\hcG_{v} )\big)\big] ,
    \end{array}
\end{equation}
 and note that for any $s_1,s_2$, 
\[
\E \big[ \tr_N\big(R^{z_1}(\cG_{s_1} \wtcG_{s_2})\big)\tr_N\big(R^{z_2}(\cG_{s_1} \hcG_{s_2})\big)\big]=\frac{1}{N^2}\sum_{i,j=1}^N \E \big[ \Tr_N \big(E_{ji} R^{z_1}(\cG_{s_1} \wtcG_{s_2}) E_{ij} R^{z_2}(\cG_{s_1} \hcG_{s_2}) \big)\big],
\]
where we recall that the $E_{ij}$'s are the unit $N\times N $ matrices. Fixing $t$ and $u$, we note that by \eqref{Itoproduit}, we have 
\begin{multline}\label{h' martinagle bracket term}
    d\big\{ R^{z_1}(\cG_v\wtcG_{t-u} ) E_{ij} R^{z_2}(\cG_v\hcG_{t-u} ) \big\} 
    \\ = d\big[ R^{z_1}(\cG_v\wtcG_{t-u} )\big] E_{ij} R^{z_2}(\cG_v\hcG_{t-u} ) +R^{z_1}(\cG_v\wtcG_{t-u} ) E_{ij} d\big[R^{z_2}(\cG_v\hcG_{t-u} )\big]
    \\ + d\big\langle  R^{z_1}(\cG_v\wtcG_{t-u} ) ,  R^{z_2}(\cG_v \hcG_{t-u} ) \big\rangle_v \sharp E_{ij},
    \end{multline}
    and 
\begin{multline}\label{h' martinagle second bracket term}
    d\big\{ R^{z_1}(\cG_u\wtcG_{v} ) E_{ij} R^{z_2}(\cG_u\hcG_{v} ) \big\} 
    \\ = d\big[ R^{z_1}(\cG_u\wtcG_{v} )\big] E_{ij} R^{z_2}(\cG_u\hcG_{v} ) +R^{z_1}(\cG_u\wtcG_{v} ) E_{ij} d\big[R^{z_2}(\cG_u\hcG_{v} )\big]
    \\ + d\big\langle  R^{z_1}(\cG_u\wtcG_{v} ),  R^{z_2}(\cG_u\hcG_{v} ) \big\rangle_v \sharp E_{ij}.
    \end{multline}

    \noindent Note that \[
d \big\langle  R^{z_1}(\cG_u\wtcG_{v} ) ,  R^{z_2}(\cG_u\hcG_{v} ) \big\rangle_{v} \sharp E_{ij}=0
\] since the martingale parts of the two sides of the tensor product are stochastic integrals with respect to independent 
elliptic Brownian motions.
Now, recalling \eqref{Itoresolvante},
\begin{multline}\label{finitevaru}d\big[ R^{z_1}(\cG_v\wtcG_{t-u} )\big]=\text{a martingale part}\\ +  R^{z_1}(\cG_v\wtcG_{t-u} )\left(d \big\langle PP^* (\cG_v\wtcG_{t-u} ),  PP^* (\cG_v \wtcG_{t-u} ) \big\rangle_{v} \sharp 
  R^{z_1}(\cG_v\wtcG_{t-u} )\right) R^{z_1}(\cG_v\wtcG_{t-u} )\end{multline}
and 
\begin{multline}\label{finitevartmoinsu}d\big[ R^{z_1}(\cG_u\wtcG_{v} )\big]=\text{a martingale part}\\ + R^{z_1}(\cG_u\wtcG_{v} )\left( d \big\langle PP^* (\cG_u\wtcG_{v} ),   PP^* (\cG_u\wtcG_{v} ) \big\rangle_{v} \sharp 
  R^{z_1}(\cG_u\wtcG_{v} )\right) R^{z_1}(\cG_u\wtcG_{v} )\end{multline}
with similar equations  for  $d\big[ R^{z_2}(\cG_v\hcG_{t-u} )\big]$ and  $d\big[ R^{z_2}(\cG_u\hcG_{v} )\big]$.
When taking the expectation, the martingale terms  
are not involved.
Now, recall that the processes $\wtcG_t$ and $\hcG_t$ are chosen to satisfy the stochastic differential equations in \eqref{stoch diff eq hat and tilde}, which have a sense of multiplication opposite to that in \eqref{Stoch Diff Eq sigma}, satisfied by $\cG_t$. This choice ensures that the finite variation parts of the  first two terms in \eqref{h' martinagle   bracket term} evaluated at $v=u$
and of the first two terms in \eqref{h' martinagle second bracket term} evaluated at $v=t-u$ cancel out when computing $h'(v)$ in \eqref{h' in Covariance term}. 
Indeed, this readily follows from \eqref{finitevaru} and \eqref{finitevartmoinsu} and the fact that  for any monomials $Q_1$ and $Q_2$ in $4p$ noncommutative indeterminates and for any finite products $P_1$, $P_2$ and $V$ of monomials or resolvents of nonnegative polynomials,
\begin{multline}
\frac{d}{dv}_{|_{v=u}} \int_0^v P_1(\cG_s\wtcG_{t-u} ) \left[d\big\langle  Q_1(\cG\wtcG_{t-u} ) ,  Q_2(\cG\wtcG_{t-u} ) \big\rangle_s   \sharp V (\cG_s\wtcG_{t-u} )\right] P_2(\cG_s\wtcG_{t-u} )
\\=
\frac{d}{dv}_{|_{v=t-u}} \int_0^vP_1(\cG_u\wtcG_{s} )\left[d\big\langle Q_1(\cG_u\wtcG ) ,    Q_2(\cG_u\wtcG ) \big\rangle_s  \sharp V(\cG_u\wtcG_{s})\right] P_2(\cG_u\wtcG_{s} ).\label{equalitybrackets}
\end{multline}
To prove  \eqref{equalitybrackets}, we first introduce the notation: for any $\epsilon=(\epsilon_1,\epsilon_2)\in \{\cdot,\ast,-1\}^2$ and matrix $A$,  $A^{\epsilon}:=(A^{\epsilon_1})^{\epsilon_2},$
with $A^{\cdot}=A$.  It is easy to see that \eqref{equalitybrackets} follows from the identities:
  $\forall \epsilon=(\epsilon_1,\epsilon_2), \hat \epsilon=(\hat \epsilon_1,\hat \epsilon_2)$ such that $\epsilon_1,\epsilon_2, \hat \epsilon_1,\hat \epsilon_2\in \{\cdot, -1,\ast\}$, 
\begin{align*}
    \frac{d}{dv}_{|_{v=u}}\int_0^v d \big  \langle (\cG \wtcG_{t-u})^\epsilon 
    \,  
   , (\cG\wtcG_{t-u})^{\hat \epsilon} \,  \big\rangle_s \sharp V (\cG_s\wtcG_{t-u})
    \\
  = \frac{d}{dv}_{|_{v=t-u}}\int_0^v d\big\langle (\cG_u \wtcG)^\epsilon \,  , (\cG_u \wtcG)^{\hat \epsilon} \,  \big\rangle_s \sharp V(\cG_u \wtcG_{s}).
   \end{align*}

\noindent Note that the quadratic variation terms involving $\tilde G$ can be deduced from 
the ones involving $G$ listed in Lemma \ref{Lemma:Calcul QuadraticCov} by replacing $G$ by $\tilde G^{-1}$, $G^{-1}$ by $\tilde G$, $G^\ast$ by $\tilde G^{-1,\ast}$ and $G^{-1,\ast}$ by $\tilde G^\ast$. 
Consequently, when taking the expectation, the only contributing term is the quadratic covariation term in \eqref{h' martinagle bracket term} which yields that
\begin{align}\label{expressionhprime}
    h'(v)&= \frac{1}{N^2}\sum_{i,j=1}^N \E \Tr_N \Big[ E_{ji} \frac{d}{dv}_{|_{v=u}}\int_0^v d\big\langle  R^{z_1}(\cG\wtcG_{t-u} ) ,  R^{z_2}(\cG\hcG_{t-u} ) \big\rangle_s \sharp E_{ij}\Big]
    . 
\end{align}
We proceed by noting that $PP^\ast$ can be written as a sum of monomials: $
PP^\ast=\sum_{\alpha=1}^m W_\alpha$.
To compute the contributing quadratic covariation term above, we write for any process $V$ in $M_N(\mathbb{C})$,
\begin{multline}
  d \big\langle  R^{z_1}(\cG\wtcG_{t-u} ) ,  R^{z_2}(\cG\hcG_{t-u} ) \big\rangle_v \sharp V_v
 = \sum_{\widetilde{\alpha},\hat{\alpha}=1}^m \sum_{\ell=1}^p \sigma_\ell^2 \\ \sum_{\substack{W_{\widetilde{\alpha}}(\cG_v\wtcG_{t-u} )=A^{\ell ,\widetilde{\alpha}}_{t,u,v} \widetilde{X}^{\ell }_v B^{\ell ,\widetilde{\alpha}}_{t,u,v}\\W_{\hat{\alpha}}(\cG_v\hcG_{t-u} )=C^{\ell ,\hat{\alpha}}_{t,u,v} \hat{X}^{\ell }_v D^{\ell ,\hat{\alpha}}_{t,u,v}}}  \widetilde{R}_{t,u,v}(z_1)  A^{\ell ,\widetilde{\alpha}}  \Big( d\big\langle \widetilde{X}^{\ell } , \hat{X}^{\ell }\big\rangle_{v} \sharp \big(B^{\ell ,\widetilde{\alpha}} \widetilde{R}_{t,u,v}^{z_1} V_v \hat{R}_{t,u,v}^{z_2} C^{\ell ,\hat{\alpha}}\big)\Big) D^{\ell ,\hat{\alpha}}  \hat{R}_{t,u,v}^{z_2} \label{bracketRtildeRhat}
\end{multline}
at time $v=u$ and  for all choices of $\widetilde{X}^{\ell }_v  \in \big\{ G_v^{(\ell)}\widetilde{G}_{t-u}^{(\ell)}, ( G_v^{(\ell)}\widetilde{G}_{t-u}^{(\ell)} )^\ast\!, (G_v^{(\ell)}\widetilde{G}_{t-u}^{(\ell)} )^{-1}\!, ((G_v^{(\ell)}\widetilde{G}_{t-u}^{(\ell)} )^{-1})^\ast\!  \big\}$ and  $\hat{X}^{\ell }_v \in$ $\big\{ G_v^{(\ell)}\hat{G}_{t-u}^{(\ell)} \!\!,$ $ (G_v^{(\ell)}\hat{G}_{t-u}^{(\ell)} )^\ast, (G_v^{(\ell)}\hat{G}_{t-u}^{(\ell)} )^{-1}, ((G_v^{(\ell)}\hat{G}_{t-u}^{(\ell)} )^{-1})^\ast  \big\}$. To simplify the presentation, we have omitted the subscripts from the terms \(A\), \(B\), \(C\), and \(D\), retaining them only for the resolvents, which will reflect these subscripts throughout the computation. 
Now, one can easily check that  for all such choices of $\widetilde{X}^{\ell }_v
$ and  $\hat{X}^{\ell }_v $ as above, we have 
\begin{equation}\label{crochetsgeneraux}d\big\langle \widetilde{X}^{\ell } , \hat{X}^{\ell }\big\rangle_{v} \sharp V_v= Q_1^{(\ell)}\tr_N(Q_2^{(\ell) }V_v)dv,\end{equation}
for some monomials $Q_1^{(\ell)}$ and $Q_2^{(\ell) }$ in $G^{(\ell)}_v, (G_v^{(\ell)})^{-1}, (G_v^{(\ell)})^{\ast}, (G_v^{(\ell)})^{-1,\ast}$, $ $
$\tilde G^{(\ell)}_{t-u}, (\tilde G_{t-u}^{(\ell)})^{-1}, $ $(\tilde G_{t-u}^{(\ell)})^{\ast},$ $ (\tilde G_{t-u}^{(\ell)})^{-1,\ast}$,
 $\hat G^{(\ell)}_{t-u}, (\hat G_{t-u}^{(\ell)})^{-1},$ $ (\hat G_{t-u}^{(\ell)})^{\ast},$ and $ (\hat G_{t-u}^{(\ell)})^{-1,\ast}$.
Hence, we obtain that 
\begin{align}
& \frac{1}{N^2}\sum_{i,j=1}^N  \Tr_N \frac{d}{dv}_{|_{v=u}}\int_0^v \Big[ E_{ji} \widetilde{R}_{t,u,v}^{z_1}  A^{\ell ,\widetilde{\alpha}}  \Big(d \big\langle \widetilde{X}^{\ell } , \hat{X}^{\ell }\big\rangle_v \sharp \big(B^{\ell ,\widetilde{\alpha}} \widetilde{R}_{t,u,v}^{z_1} E_{ij}\hat{R}_{t,u,v}^{z_2} C^{\ell ,\hat{\alpha}}\big)\Big) D^{\ell ,\hat{\alpha}}  \hat{R}_{t,u,v}^{z_2}  \Big]
 \nonumber\\ & =\frac{1}{N^3} \sum_{i,j=1}^N  \Tr_N\big[E_{ji}\widetilde{R}_{t,u}^{z_1}  A^{\ell ,\widetilde{\alpha}} Q_1^{(\ell)}D^{\ell ,\hat{\alpha}}  \hat{R}_{t,u}^{z_2} \big] \Tr_N\big[Q_2^{(\ell)} B^{\ell ,\widetilde{\alpha}} \widetilde{R}_{t,u}^{z_1} E_{ij} \hat{R}_{t,u}^{z_2} C^{\ell ,\hat{\alpha}} \big]
\nonumber\\&=  \frac{1}{N^2} \tr_N \Big[ (\hat{R}_{t,u}^{z_2})^2 C^{\ell ,\hat{\alpha}} Q_2^{(\ell)} B^{\ell ,\widetilde{\alpha}} (\widetilde{R}_{t,u}^{z_1})^2 A^{\ell ,\widetilde{\alpha}} Q_1^{(\ell)} D^{\ell ,\hat{\alpha}}
 \Big].\label{expressioncondense}
 \end{align}
Finally, we note that \eqref{expressionhprime}, \eqref{bracketRtildeRhat} and  
\eqref{expressioncondense} readily yield the statement of the lemma and end the proof.
 \end{proof}
Theorem \ref{theo: variance estimate} readily follows from \eqref{methodeHS}, Lemma \ref{cov}, H\"older inequality, Corollary \ref{NormeLp}, Proposition \ref{samelaw} and 
applying  Lemma \ref{applicationHS} for the integral on $z_1$ and then on $z_2$.
\paragraph{The particular case of $f_{N,\delta}$.}
We let  $0<\delta <1$ and recall the definitions of the set $\K_N(\delta)$ and the function $f_{N,\delta}$ as given in \eqref{def:K(delta)} and \eqref{def:f delta}, respectively. 
As explained in Section \ref{Section: idea of proof},  we need a sharper estimate of $\Var{\left[\tr _N(f_{N,\delta}(PP^{*}(\mathcal{G}_t,A^N)))\right]}$ of order $\frac{1}{N^4}$.

At this point, we can see that we  need a more general estimate than \eqref{singleresolvent}, since Lemma \ref{cov} involves a mixture of resolvents and monomials as well as independent
tuples of multiplicative Brownian motions  that are left- and right-invariant.  Thus, the remaining part relies on Theorem \ref{theo:fundamental}.  
Indeed, assume that Theorem \ref{theo:fundamental}
is established. According to  Lemma \ref{cov}, there exists some finite subset $\Theta$ so that 
  \begin{multline*}
      \Cov\big[\tr_N(z_1-PP^*(\mathcal{G}_t,A^N))^{-1},\; \tr_N(z_2-PP^*(\mathcal{G}_t,A^N))^{-1}\big]\\=\frac{1}{N^2} \sum_{\theta\in \Theta}
  \int_0^t \E\tr_N\big[ (z_1-PP^\ast(\cG_u\wtcG_{t-u},A^N ))^{-2} Q_1^{(\theta)} (\cG_u,\wtcG_{t-u},\hcG_{t-u},A^N) \\(z_2- PP^\ast(\cG_u\hcG_{t-u},A^N ))^{-2}  Q_2^{(\theta)}(\cG_u,\wtcG_{t-u},\hcG_{t-u},A^N)\big]du,
 \end{multline*}
 where $Q_1^{(\theta)}$ and $Q_2^{(\theta)}$ are noncommutative monomials in $12p+2r$ variables.
 Let  $\tilde  g_{\lambda, \tau}^{(\ell)}$, $\hat g_{\lambda, \tau}^{(\ell)}$, $\ell=1,\ldots,p$, be freely independent copies of the free multiplicative $(\lambda,\tau)$-Brownian motions defined for any $t\geq 0$ as the solution of the free stochastic differential equations 
\begin{equation*}
    d\hat g_{\lambda, \tau}^{(\ell)}(t)=\sigma_\ell \, \big( i \, d\hat z_{\lambda, \tau}^{(\ell)}(t) -\frac{\sigma_\ell}{2}(\lambda-\tau)\, dt \big) \hat g_{\lambda, \tau}^{(\ell)}(t)\quad \text{with}\quad \hat g_{\lambda, \tau}^{(\ell)}(0)=I_\A,\end{equation*}
    \begin{equation*}
    d\tilde g_{\lambda, \tau}^{(\ell)}(t)=\sigma_\ell \, \big( i \, d\tilde z_{\lambda, \tau}^{(\ell)}(t) -\frac{\sigma_\ell}{2}(\lambda-\tau)\, dt \big) \tilde g_{\lambda, \tau}^{(\ell)}(t)\quad \text{with}\quad \tilde g_{\lambda, \tau}^{(\ell)}(0)=I_\A,\end{equation*}
    where the $\hat z_{\lambda, \tau}^{(\ell)} $'s, $\tilde z_{\lambda, \tau}^{(\ell)} $'s are free rotated elliptic Brownian motions that are freely independent from each other and from the $z_{\lambda, \tau}^{(\ell)}$'s. Note that the difference between the above free stochastic differential equations and the one satisfied by ${g}_{\lambda, \tau}^{(\ell)}(t)$ in \eqref{free Stoch Diff Eq with sigma} lies in the sense of multiplication.  Finally, we denote by 
     \begin{align*}   \quad{\underline{g}}_t :=(\g_t,\g_t^\ast,\g_t^{-1},\g_t^{-1,\ast}) 
\quad &\text{with} \quad
 \g_t=( g_{\lambda,\tau}^{(1)},\dots , g_{\lambda,\tau}^{(p)}), \\
   \quad\hat{\underline{g}}_t:=(\hat\g_t,\hat\g_t^\ast,\hat \g_t^{-1},\hat \g_t^{-1,\ast}) 
\quad &\text{with} \quad
\hat \g_t=(\hat{g}_{\lambda,\tau}^{(1)},\dots , \hat{g}_{\lambda,\tau}^{(p)}),
\\
   \quad\tilde{\underline{g}}_t:=(\tilde\g_t,\tilde\g_t^\ast,\tilde \g_t^{-1},\tilde \g_t^{-1,\ast}) 
\quad &\text{with} \quad
\tilde \g_t=(\tilde{g}_{\lambda,\tau}^{(1)},\dots , \tilde{g}_{\lambda,\tau}^{(p)}).
\end{align*}
As for $(g_{\lambda,\tau}(t))_{t\geq 0}$, we will work in the  $C^\ast$-probability space $(\mathcal{A}_N,\varphi_N)$ in which  $({\underline{g}}_t)_{t\geq 0}$, $(\hat{\underline{g}}_t)_{t\geq 0}$, and $(\tilde{\underline{g}}_t)_{t\geq 0}$ are freely independent from  $A_N$.

Now, according to Theorem \ref{theo:fundamental}, there exist commutative polynomials with nonnegative coefficients $T_1$ and $T_2$ such that for all $0\leq u\leq t$, 
 \begin{multline*}     
\left|\E\tr_N\big[ (z_1-PP^\ast(\cG_u\wtcG_{t-u} ,A^N))^{-2} Q_1^{(\theta)}(\cG_u,\wtcG_{t-u},\hcG_{t-u},A^N) \right. \\ \cdot
(z_2- PP^\ast(\cG_u\hcG_{t-u} ,A^N))^{-2} Q_2^{(\theta)}(\cG_u,\wtcG_{t-u},\hcG_{t-u},A^N)\big]\\
-\left. \varphi_N\big[ (z_1-PP^\ast({\underline{g}}_u\tilde {\underline{g}}_{t-u} ,A^N))^{-2} Q_1^{(\theta)}({\underline{g}}_u,\tilde {\underline{g}}_{t-u},\hat {\underline{g}}_{t-u},A^N) (z_2-PP^\ast({\underline{g}}_u\hat {\underline{g}}_{t-u},A^N ))^{-2}  Q_2^{(\theta)}({\underline{g}}_u,\tilde {\underline{g}}_{t-u},\hat {\underline{g}}_{t-u},A^N) \big]\right|\\
\leq \frac{1}{N^2} T_1(|\Im z_1|^{-1})T_2(|\Im z_2|^{-1}).
 \end{multline*}
 Choose $k=\max\{k_1,k_2,2\}$ where the $k_i$'s denote the degrees of the $T_i$'s.
 Then, applying successively Lemma \ref{applicationHS} for the integral on $z_1$ and then on $z_2$,  we obtain that \begin{align}
&\Var{\left[\tr _N(f_{N,\delta}(PP^{*}(\mathcal{G}_t,A^N)))\right]}
\\& =\frac{1}{\pi^2} 
\frac{1}{N^2} \sum_{\theta\in \Theta}\int_0^t \int_{\mathbb C} \int_{\mathbb C}\bar{\partial} F_k(f_{N,\delta})(z_1)\bar{\partial} F_k(f_{N,\delta})(z_2)\nonumber
 \varphi_N\big[ (z_1-PP^\ast({\underline{g}}_u\tilde {\underline{g}}_{t-u} ,A^N))^{-2} Q_1^{(\theta)}({\underline{g}}_u,\tilde {\underline{g}}_{t-u},\hat {\underline{g}}_{t-u},A^N) \\& \qquad \cdot (z_2-PP^\ast({\underline{g}}_u\hat {\underline{g}}_{t-u},A^N ))^{-2}  Q_2^{(\theta)}({\underline{g}}_u,\tilde {\underline{g}}_{t-u},\hat {\underline{g}}_{t-u},A^N) \big]
   d^2z_1d^2z_2 \nonumber +O(1/N^4).\label{integralescomplexes}
\end{align}

Recall that $ f_{N,\delta}$  belongs to $ {\cal C}^\infty (\mathbb{R}, \mathbb{R}) $, satisfies $ f_{N,\delta} \equiv 1$ on $ {\K}_N(\frac{\delta}{2})$, and has support contained in $ {\K}_N(2\delta)$.
Since $\chi\equiv 1$ around zero, there exists $L>0$ such that ${\chi}^{\prime}\equiv 0$ on $[-L;L]$ from which we deduce that $\bar{\partial} F_k(f_{N,\delta})(z)\equiv 0$ on $K_N({\delta/2})\times [-L;L]$.
Therefore, the previous integrals on $\mathbb{C}$ are actually integrals on a compact set $\mathcal{C}_N$ in $^c(K_N({\delta/2})\times [-L;L])$.
Note that the spectrum of  $ PP^*({\underline{g}}_u\hat {\underline{g}}_{t-u}, A^N)$ and the spectrum of  $ PP^*({\underline{g}}_u\tilde {\underline{g}}_{t-u} ,A^N)$ coincide with the spectrum ${\bf Sp}_N$ of  $ PP^*({\underline{g}}_t,A^N)$. Indeed, thanks to Proposition~\ref{Prop:verylast}, the noncommutative distribution of $\hat {\underline{g}}_{t-u}$ is the same as the one of $\underline{g}_{t-u}$, so the stationarity and freeness of increments of Proposition~\ref{Prop:stationarity} yield that the noncommmutative distribution of ${\underline{g}}_u\hat {\underline{g}}_{t-u}$  is the one of  ${\underline{g}}_{u+(t-u)}={\underline{g}}_t$. Similarly, the noncommmutative distribution of ${\underline{g}}_u\tilde {\underline{g}}_{t-u}$  also matches that of  ${\underline{g}}_t$. As a result,  $ PP^*({\underline{g}}_u\hat {\underline{g}}_{t-u} ,A^N)$, $ PP^*({\underline{g}}_u\tilde {\underline{g}}_{t-u},A^N )$ and $ PP^*({\underline{g}}_t,A^N)$ all share the same spectrum ${\bf Sp}_N$.

Now, for any $u\in {\bf Sp}_N$ and any $z_2 \in \mathcal{C}_N$, $\left|(z_2-u)^{-2}\right|\leq (\min(\delta/2,L))^{-2}.$
Thus, we can differentiate under the integral and obtain that 
$$f_{N,\delta}'(u)=-\frac{1}{\pi}\int_{\mathbb{C}}\bar{\partial} F_k(f_{N,\delta})(z_2){(z_2-u)}^{-2}dz_2. $$
Therefore, 
$$\int_{\mathbb{C}}\bar{\partial} F_k(f_{N,\delta})(z_2)(z_2-PP^*( {\underline{g}}_u\hat {\underline{g}}_{t-u} ,A^N))^{-2}dz_2=-\pi f_{N,\delta}'(PP^*({\underline{g}}_u\hat {\underline{g}}_{t-u} ,A^N))\\=0,$$
so that  we obtain the following estimate on the variance
\begin{equation*}
\Var{\left[\tr _N(f_{N,\delta}(PP^{*}(\mathcal{G}_t, A^N)))\right]}:=\Var{\left[\tr _N(f_{N,\delta}(PP^{*}(\G_t,\G_t^\ast,\G_t^{-1}, \G_t^{-1,\ast},A^N)))\right]}=O\Big(\frac{1}{N^4}\Big).
\end{equation*}

{

\section{Fundamental Result}\label{Section:fundamental result}
This section is dedicated to the proof of the fundamental result of the paper. Let $P^{(1)}, \ldots, P^{(n)},$ $ Q^{(1)}, \ldots, Q^{(n)}$  be  noncommutative polynomials in $X_\ell,$
$ X_\ell^*,$
$
X_\ell^{-1},$
$
X_\ell^{-1,*},$
$ \hat X_\ell,$
$ \hat X_\ell^*,$
$ \hat X_\ell^{-1},$
$
\hat X_\ell^{-1,*},$
$ \ell=1,\ldots,p$ and $Y_i, Y^*_i, i=1,\ldots,r$. 
Define 
for $z_1,\ldots, z_n\in \mathbb{C}\setminus \mathbb{R}$, $$R^{(i)}=
(z_i-P_iP_i^*)^{-1}.$$
Let  $\hat g_{\lambda, \tau}^{(\ell)}$, $\ell=1,\ldots,p$, be freely independent copies of the free multiplicative $(\lambda,\tau)$-Brownian motions defined for any $t\geq 0$ as the solution of the free stochastic differential equation 
\begin{equation*}
    d\hat g_{\lambda, \tau}^{(\ell)}(t)=\sigma_\ell \, \big( i \, d\hat z_{\lambda, \tau}^{(\ell)}(t) -\frac{\sigma_\ell}{2}(\lambda-\tau)\, dt \big) \hat g_{\lambda, \tau}^{(\ell)}(t)\quad \text{with}\quad \hat g_{\lambda, \tau}^{(\ell)}(0)=I_\A,\end{equation*}
    where the $\hat z_{\lambda, \tau}^{(\ell)} $'s are independent rotated elliptic Brownian motions independent from the $z_{\lambda, \tau}^{(\ell)}$'s.
Note that the difference between the above free stochastic differential equations and the one satisfied by ${g}_{\lambda, \tau}^{(\ell)}(t)$ in \eqref{free Stoch Diff Eq with sigma} lies in the sense of multiplication.  Finally, we denote by 
     \begin{align*}   \quad{\underline{g}}_t :=(\g_t,\g_t^\ast,\g_t^{-1},\g_t^{-1,\ast}) 
\quad &\text{with} \quad
 \g_t=( g_{\lambda,\tau}^{(1)},\dots , g_{\lambda,\tau}^{(p)}), \\
   \quad\hat{\underline{g}}_t:=(\hat\g_t,\hat\g_t^\ast,\hat \g_t^{-1},\hat \g_t^{-1,\ast}) 
\quad &\text{with} \quad
\hat \g_t=(\hat{g}_{\lambda,\tau}^{(1)},\dots , \hat{g}_{\lambda,\tau}^{(p)}).
\end{align*}
As previously, we will work in the  $C^\ast$-probability space $(\mathcal{A}_N,\varphi_N)$ in which  $({\underline{g}}_t)_{t\geq 0}$ and $(\hat{\underline{g}}_t)_{t\geq 0}$ are freely independent from  $A_N$.
\begin{theorem}\label{theo:fundamental}
For $i=1,\dots , n$, let $Q^{(i)}$ and $R^{(i)}$  be respectively any noncommutative polynomials and resolvents in $8p+2r$ noncommutative indeterminates, as defined above. Let $\cG$ and $\hcG$ be  independent multiplicative $(\lambda,\tau)$-Brownian motions satisfying \eqref{Stoch Diff Eq sigma} and \eqref{stoch diff eq hat and tilde}, respectively. Then, for any $t>0$, there exists a
commutative polynomial $T_t$ with nonnegative coefficients
such that for all $0\leq u\leq t$, $${\mathbb{E}\left[\tr_{N}\left(R^{(1)}Q^{(1)}\cdots R^{(n)}Q^{(n)} ({\mathcal{G}}_u,\hat{\mathcal{G}}_u, A^N)\right)\right]  }=  \varphi_N\left(R^{(1)}Q^{(1)}\cdots R^{(n)}Q^{(n)} (\underline{g}_u, \hat{\underline{g}}_u,A^N)\right)+\Delta_u$$
  where $$|\Delta_u|\leq \frac{ T_t(|\Im z_i|^{-1}, i=1, \ldots, n)}{N^2}.$$
\end{theorem}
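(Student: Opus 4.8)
\emph{Set-up for the interpolation.} Fix $t>0$ and $u\in[0,t]$. For an integer $k\ge 1$ the $*$-distribution of $(I_k\otimes\cG_u,\,I_k\otimes\hcG_u,\,I_k\otimes A^N)$ in $(M_{Nk}(\C),\tr_{Nk})$ equals that of $(\cG_u,\hcG_u,A^N)$ in $(M_N(\C),\tr_N)$, so the left-hand side of the stated identity is the $k$-independent quantity $\E[\tr_{Nk}(R^{(1)}Q^{(1)}\cdots R^{(n)}Q^{(n)}(I_k\otimes\cG_u,I_k\otimes\hcG_u,I_k\otimes A^N))]$. Let $\cK^{(1)},\dots,\cK^{(p)}$ and $\hcK^{(1)},\dots,\hcK^{(p)}$ be independent multiplicative $(\lambda,\tau)$-Brownian motions on $GL_{Nk}(\C)$ solving the analogues of \eqref{Stoch Diff Eq sigma} and \eqref{stoch diff eq hat and tilde} in dimension $Nk$ and independent of $\cG,\hcG$, and write $\cK_u,\hcK_u$ for the associated tuples with their adjoints and inverses. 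Since $\cG$ (hence $\cK$) has unitarily invariant distribution, the independent family $(\cK_u,\hcK_u)$ is asymptotically free, as $k\to\infty$, from the deterministic matrices $I_k\otimes A^N$, whose $*$-distribution is that of $A^N$ for every $k$; combining this with the weak convergence of Theorem~\ref{th:weakconv} (applied in dimension $Nk$, to the independent pair) and the uniform $L^p$-bounds of Corollary~\ref{NormeLp} on the operator norms of the $\cK^{(\ell)}_u$ and $(\cK^{(\ell)}_u)^{-1}$ — which let one pass from polynomials to products of resolvents and monomials by uniform integrability — we obtain $\lim_{k\to\infty}\E[\tr_{Nk}(R^{(1)}Q^{(1)}\cdots R^{(n)}Q^{(n)}(\cK_u,\hcK_u,I_k\otimes A^N))]=\varphi_N(R^{(1)}Q^{(1)}\cdots R^{(n)}Q^{(n)}(\underline{g}_u,\hat{\underline{g}}_u,A^N))$. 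Hence it suffices to bound, uniformly in $k$, the difference
\[\Big|\E\big[\tr_{Nk}\big(R^{(1)}Q^{(1)}\cdots(I_k\otimes\cG_u,I_k\otimes\hcG_u,I_k\otimes A^N)\big)\big]-\E\big[\tr_{Nk}\big(R^{(1)}Q^{(1)}\cdots(\cK_u,\hcK_u,I_k\otimes A^N)\big)\big]\Big|\]
by $N^{-2}\,T_t(|\Im z_i|^{-1})$ for a commutative polynomial $T_t\ge 0$ not depending on $k$; letting $k\to\infty$ then finishes the proof.

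\emph{Multiplicative interpolation.} By a hybrid argument it is enough to change one Brownian motion at a time, say $I_k\otimes G^{(\ell)}$ into $\cK^{(\ell)}$. For $s\in[0,u]$, let $W^{(s)}$ be the multiplicative $(\lambda,\tau)$-Brownian motion on $[0,u]$ driven by the noise equal to the $Nk$-dimensional elliptic Brownian motion on $[0,s]$ and to the tensored $N$-dimensional one on $[s,u]$ (all other variables left untouched); then $W^{(0)}_u=I_k\otimes G^{(\ell)}_u$ and $W^{(u)}_u=\cK^{(\ell)}_u$. One then differentiates $s\mapsto \E[\tr_{Nk}(R^{(1)}Q^{(1)}\cdots(W^{(s)},\dots))]$: by It\^o's formulas \eqref{Itoproduit} and \eqref{Itoresolvante} the martingale parts drop after taking expectations, while the first-order drift $-\tfrac{\sigma_\ell^2}{2}(\lambda-\tau)\,dv$ of the $(\lambda,\tau)$-Brownian motion is dimension-free, hence identical on $[0,s)$ and on $(s,u]$ and contributes nothing to the derivative; what is left is the difference between the $Nk$-dimensional quadratic covariations of Lemma~\ref{Lemma:Calcul QuadraticCov} (carrying $\tr_{Nk}$) and their $I_k\otimes(\cdot)$ counterparts (carrying $id_k\otimes\tr_N$, via \eqref{tensor}), evaluated along $W^{(s)}$ and the partially evolved word. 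Here the multiplicative nature of the scheme forces a careful choice: the variables must be carried as either left- or right-invariant Brownian motions (this is why $\hcG$, solving \eqref{stoch diff eq hat and tilde}, is paired with $\cG$, solving \eqref{Stoch Diff Eq sigma}), so that the various bracket terms produced by the double sum in \eqref{Itoproduit} pair up symmetrically, just as the drift terms cancel in \eqref{equalitybrackets} in the variance computation. This is the main obstacle: unlike the additive interpolations of \cite{CGParraud,ParraudGUE}, every occurrence of every variable — and of its inverse and adjoint — has to be tracked through the table of Lemma~\ref{Lemma:Calcul QuadraticCov}; once this is done, the remaining analysis is bookkeeping with It\^o's formula and the resolvent bounds.

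\emph{Extracting the factor $N^{-2}$.} For each pair of occurrences, the surviving difference of brackets has, by Lemma~\ref{Lemma:Calcul QuadraticCov} and \eqref{tensor}, the form $c\cdot X_1\big[\tr_{Nk}(Y)\,I_{Nk}-\big((id_k\otimes\tr_N)(Y)\big)\otimes I_N\big]X_2=-c\cdot X_1\,(m_0\otimes I_N)\,X_2$, for a scalar $c$ depending on $\lambda,\tau$, where $m_0\in M_k(\C)$ is the traceless part of $(id_k\otimes\tr_N)(Y)$ and $X_1,X_2,Y$ are words in the entries of $W^{(s)}$ (and of the other variables) together with resolvents $R^{(i)}$ and their squares $(R^{(i)})^2$ (the squares coming from \eqref{Itoresolvante}). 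Taking $\Tr_{Nk}$ and expanding every normalized trace over matrix units exactly as in the proof of Lemma~\ref{cov}, the entries of $m_0$ — which are normalized traces over the $N$-dimensional factor — fuse with the outer trace to produce an explicit prefactor $N^{-2}$ times a finite sum of normalized traces of products of the $R^{(i)}$, the $(R^{(i)})^2$ and monomials in $\cK^{(\ell)}_s,(\cK^{(\ell)}_s)^{-1}$ and the remaining variables. Using $\|R^{(i)}\|\le|\Im z_i|^{-1}$ and $\|(R^{(i)})^2\|\le|\Im z_i|^{-2}$ (valid since $P_iP_i^*\ge 0$) together with the uniform $L^p$-bounds of Corollary~\ref{NormeLp}, each such term is bounded by $N^{-2}$ times a monomial in the $|\Im z_i|^{-1}$ with coefficients depending on $t$ only through integrals over $[0,t]$ of $L^p$-moments of the Brownian motions; summing over the finitely many pairs of occurrences and integrating over $s\in[0,u]$ yields the required bound $N^{-2}\,T_t(|\Im z_i|^{-1})$, uniform in $k$, and letting $k\to\infty$ completes the proof.
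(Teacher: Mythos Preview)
Your overall architecture is the paper's: embed into $M_{Nk}(\C)$, interpolate multiplicatively between $I_k\otimes\cG_u$ and an independent $Nk$-dimensional Brownian motion $\cK_u$, and let $k\to\infty$. The set-up paragraph and the identification of what must be bounded are correct.

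The genuine gap is in your third paragraph. After the first interpolation, the surviving terms are exactly the ones in Proposition~\ref{step1}, namely
\[
\E\big[\tr_{Nk}(M)\tr_{Nk}(\tilde M)\big]-\E\big[\tr_{Nk}\big(\tilde M\,[(id_k\otimes\tr_N)(M)\otimes I_N]\big)\big],
\]
and you claim that ``expanding every normalized trace over matrix units'' produces a clean prefactor $N^{-2}$ times bounded normalized traces. This is not true uniformly in $k$. Using the identity $\tr_{Nk}\big(\tilde M\,[(id_k\otimes\tr_N)(M)\otimes I_N]\big)=k\sum_{l,l'}\tr_{Nk}(\tilde M p_{ll'})\tr_{Nk}(Mp_{l'l})$ with $p_{ll'}=e_{ll'}\otimes I_N$, a direct Cauchy--Schwarz bound on the $k^2$ summands loses a factor $N^2$ (or a factor $k$), so naively you only get $O(1)$, not $O(N^{-2})$. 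The analogy with Lemma~\ref{cov} is misleading: there the $1/N^2$ comes from a \emph{covariance} of two traces of the \emph{same} randomness $\cG_u$, obtained after a dedicated interpolation; here $M$ and $\tilde M$ involve both $I_k\otimes\cG$ and $\cK$, and the difference above is not a covariance without further work.

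What the paper actually does at this point is a second round of analysis you have skipped: (i) use the unitary invariance of $\cK,\hcK$ to rewrite the difference as a combination of \emph{covariances} conditional on $\cG,\hcG$ (Lemma~\ref{trick}; this is where $\E_{\cK,\hcK}[\tr_{Nk}(Mp_{ll'})]=0$ for $l\ne l'$ is crucial), then (ii) run a \emph{second} multiplicative interpolation on $\cK,\hcK$ to show these covariances are of order $(Nk)^{-2}$ (Proposition~\ref{prop:est covariance}), which after summing the $p_{ll'}$'s leaves a main term $\tfrac{1}{N^2}\tr_N\big[\E_{|\cG,\hcG}[(\tr_k\otimes id_N)\Pi_1]\,\E_{|\cG,\hcG}[(\tr_k\otimes id_N)\Pi_2]\big]$ plus $O((Nk^2)^{-1})$, and (iii) bound that main term uniformly in $k$ by the $L^2$-contractivity of $\E_{|\cG,\hcG}(\tr_k\otimes id_N)$ together with Corollary~\ref{NormeLp}. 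Your proposal needs these three ingredients; without them the bound is not uniform in $k$ and the $k\to\infty$ step does not give $N^{-2}$. A secondary point: your ``switching-noise'' process $W^{(s)}$ is, by the multiplicative-increment property, of the form $\cK^{(\ell)}_s\cdot(I_k\otimes\text{increment})$, which is the opposite left/right ordering from the paper's $(I_k\otimes\cG_{t-u})\cK_u$; the drift cancellation you invoke requires the specific pairing of left- and right-invariant SDEs used in Step~1, so you should verify that your ordering still makes the $\Sigma^{(1)}$-terms cancel.
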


\paragraph*{Step 1.} The first step is inspired by the work of Collins-Guionnet-Parraud
\cite{CGParraud}, later extended by Parraud \cite{ParraudHaar, ParraudGUE, parraud-HaarTensor}, making use of the fact that for any $k$, the distribution of  $I_k\otimes  \G_t, $ in $\left(M_{Nk}(\mathbb{C}), \tr_{Nk}\right)$ coincides with  the  distribution of  $  \G_t$ in $\left(M_{N}(\mathbb{C}), \tr_{N}\right)$, and viewing $\g_t$
 as the asymptotic distribution of 
 an independent tuple of $(\lambda,\tau)$-Brownian motions $\bfK_t$ in $GL_{Nk}(\mathbb{C})$ as $k \rightarrow \infty$. 
 
 With this aim, we let $k\geq 1$ and $Y_{\lambda,\tau}^{(\ell)}$'s, $\hat Y_{\lambda,\tau}^{(\ell)}$ be standard independent Brownian motions on  $M_{Nk}(\mathbb{C})$, rescaled by $\frac{1}{\sqrt{Nk}}$ and  independent from the $Z_{\lambda,\tau}^{(\ell)}$'s and  $\hat Z_{\lambda,\tau}^{(\ell)}$'s. We introduce the multiplicative $(\lambda,\tau)$-Brownian motions on $M_{Nk}(\mathbb{C})$ defined as a solution of the stochastic equations
\begin{align}
 &d  K_{\lambda,\tau}^{(\ell)}(t)=i\sigma_\ell d Y_{\lambda,\tau}^{(\ell)}(t) K^{(\ell)}_{\lambda,\tau}(t) 
-\frac{1}{2} \sigma^2_\ell (\lambda-\tau)  K_{\lambda,\tau}^{(\ell)}(t)dt\in M_{Nk}(\mathbb{C}), \quad  K_{\lambda,\tau}^{(\ell)}(0)=I_{Nk}, \label{K Stocg diff eq}
\\&d\hat K_{\lambda,\tau}^{(\ell)}(t)=i\sigma_\ell \hat K_{\lambda,\tau}^{(\ell)}(t) d\hat Y_{\lambda,\tau}^{(\ell)}(t)-\frac{1}{2} \sigma^2_\ell   (\lambda-\tau) \hat K_{\lambda,\tau}^{(\ell)}(t) dt \in M_{Nk}(\mathbb{C}), \quad\hat  K_{\lambda,\tau}^{(\ell)}(0)=I_{Nk}. \label{K hat Stocg diff eq}
\end{align}
We maintain consistent notation as before and denote by
\begin{align*}\mathcal {K}_t:=(\bfK_t,\bfK_t^\ast,\bfK_t^{-1}, \bfK_t^{-1,\ast}) \quad &\text{where} \quad \bfK_t:=(K_{\lambda,\tau}^{(1)},\ldots,K_{\lambda,\tau}^{(p)}),
\\ \hat{\mathcal {K}}_t:=(\hat{ \bfK}_t,\hat{ \bfK}_t^\ast, \hat{\bfK}_t^{-1}, \hat{\bfK}_t^{-1,\ast}) \quad&\text{where} \quad\hat{\bfK}_t:=(K_{\lambda,\tau}^{(1)},\ldots,K_{\lambda,\tau}^{(p)}).\end{align*}
The idea is to let  $k$ grow to infinity while keeping $N$ fixed so that $({\mathcal {K}},\hat{\mathcal {K}})$ converges to  $ (\underline{ g},\underline{\hat g})$.
 Finally, we denote by 
\begin{align*}
I_k\otimes  \cG_t:=(I_k\otimes\G_t,I_k\otimes\G_t^\ast, I_k\otimes\G_t^{-1},I_k\otimes \G_t^{-1,\ast}),
\quad \\  \text{and} \quad I_k\otimes\hat{\mathcal{G}}_t:=(I_k\otimes\hbfG_t,I_k\otimes\hbfG_t^\ast, I_k\otimes\hbfG_t^{-1},I_k\otimes\hbfG_t^{-1,\ast}) ,
\quad
\end{align*}
with $I_k\otimes\G_t=(I_k\otimes G_{\lambda,\tau}^{(1)},\dots , I_k\otimes G_{\lambda,\tau}^{(p)})$ and $
I_k\otimes\hbfG_t=(I_k\otimes\hG_{\lambda,\tau}^{(1)},\dots , I_k\otimes\hG_{\lambda,\tau}^{(p)}).$
Note that, for any $k$, the distribution of  $(I_k\otimes  \cG_t, I_k\otimes\hat{\mathcal{G}}_t)$ in $(M_{Nk}(\mathbb{C}), \tr_{Nk})$ coincides with  the  distribution of  $(  \cG_t, \hat{\mathcal{G}}_t)$ in $\left(M_{N}(\mathbb{C}), \tr_{N}\right)$. With all the necessary elements in place, we are now ready to illustrate the first step in the proof of Theorem \ref{theo:fundamental}, which consists of the following proposition.
\begin{proposition}\label{step1}
 Let $\mathcal{D}$ be a tuple of deterministic matrices. Then 
\begin{multline*}
\mathbb{E}\left[\tr_{Nk}\big(R^{(1)}Q^{(1)}\cdots R^{(n)}Q^{(n)} (\cK_t, \hcK_t, \mathcal{D})\big)\right] 
  \\ =  \mathbb{E}\left[\tr_{Nk}\big(R^{(1)}Q^{(1)}\cdots R^{(n)}Q^{(n)} (I_k\otimes  \cG_t, I_k\otimes\hat{\mathcal{G}}_t, \mathcal{D})\big)\right]  +  \int_{0}^t \mathbf{M}_u du ,
\end{multline*}
with $\mathbf{M}_u$ being a finite sum of terms of the form 
\begin{equation}\label{Mu main prop}
   \E\big[ \tr_{Nk}(M) \tr_{Nk}(\tilde M )\big]-\mathbb{E}\big[ \tr_{Nk}\big(\tilde M [(id_k\otimes \tr_{N}) (M)\otimes I_N] \big)\big], 
\end{equation}
and $M$ and $\tilde M$ are finite products of monomials or resolvents of nonnegative polynomials in the components of $I_k\otimes  \cG_{t-u}, I_k\otimes\hat{\mathcal{G}}_{t-u}, \mathcal{K}_u, \hat{\mathcal {K}}_u$ and $\mathcal{D}$.
   
\end{proposition}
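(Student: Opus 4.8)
The plan is a \emph{multiplicative interpolation} in a single parameter $u\in[0,t]$, in the spirit of \cite{CGParraud} and of the proof of Lemma~\ref{cov} above. For each $\ell$ I would define an interpolating process on $[0,t]$ by solving, for the hatless family, the left-invariant equation \eqref{K Stocg diff eq} on $[0,u]$ and then (tensored by $I_k$) the right-invariant equation \eqref{Stoch Diff Eq sigma} on $[u,t]$, and symmetrically, for the hatted family, the right-invariant equation \eqref{K hat Stocg diff eq} on $[0,u]$ followed by (tensored by $I_k$) the left-invariant equation \eqref{stoch diff eq hat and tilde} on $[u,t]$. Since right-invariant equations factor out a constant on the left and left-invariant ones factor it out on the right, the time-$t$ values are
\[
X^{(\ell),u}(t)=K^{(\ell)}_u\,\big(I_k\otimes G^{(\ell)}_{t-u}\big),
\qquad
\hat X^{(\ell),u}(t)=\big(I_k\otimes \hat G^{(\ell)}_{t-u}\big)\,\hat K^{(\ell)}_u,
\]
with the two factors in each product independent (the driving Brownian motions are chosen independent). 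At $u=0$ this is $(I_k\otimes\cG_t,I_k\otimes\hcG_t)$ and at $u=t$ it is $(\cK_t,\hcK_t)$; so, writing $\Phi(u)$ for the expectation $\E[\tr_{Nk}(R^{(1)}Q^{(1)}\cdots R^{(n)}Q^{(n)})]$ evaluated on this interpolating tuple together with $\mathcal D$, it is enough to prove $\Phi'(u)=\mathbf M_u$ with $\mathbf M_u$ of the stated form, and then integrate.

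To compute $\Phi'(u)$ I would argue exactly as in \eqref{h' in Covariance term}: $\Phi'(u)$ is the derivative obtained by letting the $(\cK,\hcK)$-halves run one extra instant minus the derivative obtained by letting the $(I_k\otimes\cG,I_k\otimes\hcG)$-halves run one extra instant. In each case It\^o's formula \eqref{Itoproduit} applied to the product, together with the resolvent formula \eqref{Itoresolvante} for each $R^{(i)}=(z_i-P_iP_i^\ast)^{-1}$ (and $P_iP_i^\ast=\sum_\alpha W_\alpha$ a sum of monomials), yields that $d\,\tr_{Nk}(\cdots)$ decomposes into a martingale part, a drift part coming from the $-\tfrac12\sigma_\ell^2(\lambda-\tau)(\cdot)\,dt$ pieces of all the SDEs, and a sum of quadratic-covariation terms computed via \eqref{compute brackets}. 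Taking expectations annihilates the martingales, and each derivative is then evaluated at the interpolation point, i.e.\ with the basic variables replaced by the $X^{(\ell),u}(t),\hat X^{(\ell),u}(t)$. The key algebraic observation is that the drift of $X^{(\ell)}$ (and of $X^{(\ell),\ast},X^{(\ell),-1},X^{(\ell),-1,\ast}$) is \emph{the same} whether it is produced by a left-invariant or a right-invariant equation, because it is a fixed scalar multiple of the variable; hence the two drift contributions cancel and $\Phi'(u)$ reduces to the difference of the two families of quadratic-covariation terms. (This is the multiplicative analogue of the cancellation \eqref{equalitybrackets}, and it is exactly why a left-invariant Brownian motion must be paired with a right-invariant one in \eqref{K Stocg diff eq}--\eqref{K hat Stocg diff eq}.)

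It then remains to read off the two families of quadratic-covariation terms. The brackets produced by the $(\cK,\hcK)$-halves are computed from the $Nk$-dimensional analogue of Lemma~\ref{Lemma:Calcul QuadraticCov}; each one produces a \emph{scalar} factor $\tr_{Nk}(\cdot)$, so inserting it into the product and applying the outer trace gives terms $\E[\tr_{Nk}(M)\,\tr_{Nk}(\tilde M)]$. The brackets produced by the $(I_k\otimes\cG,I_k\otimes\hcG)$-halves are computed instead from the tensor identity \eqref{tensor} combined with \eqref{crochetZ} and its companions; they produce a factor of the shape $\big[(id_k\otimes\tr_N)(\cdot)\big]\otimes I_N$, so inserting it and tracing gives terms $\E[\tr_{Nk}(\tilde M\,[(id_k\otimes\tr_N)(M)\otimes I_N])]$. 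In both cases $M$ and $\tilde M$ are finite products of monomials and of resolvents of nonnegative polynomials in the components of $\cK_u,\hcK_u,I_k\otimes\cG_{t-u},I_k\otimes\hcG_{t-u}$ and $\mathcal D$, since those are precisely the building blocks of $X^{(\ell),u}(t),\hat X^{(\ell),u}(t)$ and of the monomials generated by the brackets of Lemma~\ref{Lemma:Calcul QuadraticCov}. Subtracting the second family from the first gives $\Phi'(u)=\mathbf M_u$ of the form \eqref{Mu main prop}, and integrating over $[0,t]$ proves the proposition.

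I expect the main obstacle to be precisely the drift cancellation: one must verify, decoration by decoration ($(\cdot)$, ${}^\ast$, ${}^{-1}$, ${}^{-1,\ast}$), that the left- and right-invariant SDEs contribute identical finite-variation terms at the interpolation point, which is what dictates the left/right pairing in \eqref{K Stocg diff eq}--\eqref{K hat Stocg diff eq}. A second, more routine point is to justify that $\tfrac{d}{dv}\big|_{v=u}$ of the integrated finite-variation parts simply evaluates the integrand at $v=u$ --- which follows from continuity of the integrand together with uniform $L^p$ control of the processes and their inverses (Corollary~\ref{NormeLp}) --- and to keep track of the fact that differentiating a resolvent via \eqref{Itoresolvante} and expanding $P_iP_i^\ast$ reproduces only terms of the two announced shapes.
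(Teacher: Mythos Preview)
Your overall strategy is the paper's: interpolate multiplicatively between $(\cK_t,\hcK_t)$ and $(I_k\otimes\cG_t,I_k\otimes\hcG_t)$, show the scalar drifts cancel, and identify the residual brackets. However, your specific interpolation $X^{(\ell),u}=K^{(\ell)}_u\,(I_k\otimes G^{(\ell)}_{t-u})$ and $\hat X^{(\ell),u}=(I_k\otimes\hat G^{(\ell)}_{t-u})\,\hat K^{(\ell)}_u$ is the \emph{reverse} of the one the paper actually uses, namely $(I_k\otimes G_{t-u})\,K_u$ and $\hat K_u\,(I_k\otimes\hat G_{t-u})$. This is not cosmetic.

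The drift cancellation you flag as the main obstacle is in fact harmless for either ordering: each drift is $-\tfrac12\sigma_\ell^2(\lambda-\tau)$ times the variable itself, independently of the side. The genuine obstacle is that the two families of quadratic-covariation terms must come out \emph{identical except for the trace}: one needs $P_1\,\tr_{Nk}(P_2\,V\,P_3)\,P_4$ from the $\cK$-side and $P_1\,\big[(id_k\otimes\tr_N)(P_2\,V\,P_3)\otimes I_N\big]\,P_4$ from the $I_k\otimes\cG$-side, with the \emph{same} monomials $P_i$, so that their difference is literally of the form \eqref{Mu main prop}. The paper's ordering achieves this because in $(I_k\otimes G_{t-u})K_u$ the noise from $dK=i\sigma\,dY\,K$ and the noise from $d(I_k\otimes G)=(I_k\otimes G)(I_k\otimes i\sigma\,dZ)$ both sit at the \emph{interface} between the two factors, so \eqref{compute brackets} produces matching $P_i$'s on the two sides. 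With your ordering the $K$-noise sits on the far left while the $G$-noise sits on the far right; already for $d\langle X,X^\ast\rangle\sharp V$ one gets
\[
\sigma^2\lambda\,\tr_{Nk}(X\,V\,X^\ast)\,I_{Nk}\quad\text{from the }\cK\text{-side,}\qquad
\sigma^2\lambda\,X\,\big[(id_k\otimes\tr_N)(V)\otimes I_N\big]\,X^\ast\quad\text{from the }I_k\otimes\cG\text{-side,}
\]
and after applying the outer $\tr_{Nk}$ these cannot be paired into a single term of the shape \eqref{Mu main prop} (the map $id_k\otimes\tr_N$ is not tracial, so cyclicity does not rescue the match). The remedy is simply to swap the order of the two factors in both interpolations; the rest of your outline then goes through exactly as in the paper.
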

\begin{proof}
We start by setting some notation. For any polynomial or resolvent $W$ in noncommuting variables, we denote by 
\begin{align*}
 W_{t,u}&:=W((I_k\otimes \cG_{t-u})\cK_u,  \hcK_u (I_k\otimes \hcG_{t-u}),\mathcal{D})
.
\end{align*}
Finally, we set  $$\Pi_{t,u}=R^{(1)}_{t,u}Q^{(1)}_{t,u}\cdots R^{(n)}_{t,u}Q^{(n)}_{t,u},$$ 
so that 
\begin{multline*}
\mathbb{E}\Big[\tr_{Nk}\Big(R^{(1)}Q^{(1)}\cdots R^{(n)}Q^{(n)}  (\cK_t, \hcK_t,\mathcal{D}) - R^{(1)}Q^{(1)}\cdots R^{(n)}Q^{(n)} (I_k\otimes  \cG_t, I_k\otimes\hat{\mathcal{G}}_t,\mathcal{D})\Big)\Big]    \\= \mathbb{E}\left[\Pi_{t,t}\right]
-\mathbb{E}\left[\Pi_{t,0}\right]=\int_0^t \frac{d}{du}\mathbb{E}\left[\Pi_{t,u}\right]du.
\end{multline*}
We compute the derivative and write 
\begin{align}
        \frac{d}{du} \mathbb{E}(\Pi_{t,u})= \frac{d}{dv}_{|_{v=u}} \mathbb{E}\left[R^{(1)}Q^{(1)}\cdots R^{(n)}Q^{(n)}
   ((I_k\otimes \cG_{t-u})\cK_v, \hcK_v (I_k\otimes \hcG_{t-u}),\mathcal{D})\right]\label{1st derivative term Fund theo} \\ -
    \frac{d}{dv}_{|_{v=t-u}} \mathbb{E}\left[
    R^{(1)}Q^{(1)}\cdots R^{(n)}Q^{(n)}((I_k\otimes \cG_{v})\cK_u,\hcK_u (I_k\otimes \hcG_{v}),\mathcal{D})\right]\label{2nd derivative term Fund theo} .
\end{align}
Note that the first term leads to stochastic calculus with respect to $\mathcal{K}$ and $\hat{\mathcal{K}}$ while the second term leads to stochastic calculus with respect to $I_k\otimes \cG$ and $I_k\otimes \hcG$.
In both cases, 
 we obtain according to \eqref{Itoproduit} and \eqref{Itoresolvante}, 
 $$dR^{(1)}Q^{(1)}\cdots R^{(n)}Q^{(n)}= d\Sigma^{(1)}+  d\Sigma^{(2)}$$
where
\begin{multline*}
  d\Sigma^{(1)} =    \sum_{i=1}^n R^{(1)}Q^{(1)}\cdots R^{(i)}dQ^{(i)}R^{(i+1)}\cdots R^{(n)}Q^{(n)}
 \\+ \sum_{i=1}^n R^{(1)}Q^{(1)}\cdots Q^{(i-1)}R^{(i)}d(P_iP_i^*)R^{(i)}Q^{(i)}\cdots R^{(n)}Q^{(n)},   
\end{multline*}
and 
\begin{multline}
   d\Sigma^{(2)}=    \sum_{i=1}^n R^{(1)}Q^{(1)}\cdots Q^{(i-1)}R^{(i)}\left(d\langle P_iP_i^*, P_iP_i^*\rangle \sharp R^{(i)}\right)  R^{(i)}Q^{(i)}\cdots R^{(n)}Q^{(n)} \\
+\sum_{i<j}N_t^{(1)}\cdots N^{(i-1)}\left(d<N^{(i)}, N^{(j)}> \sharp [N^{(i+1)}\cdots N^{(j-1)}]\right)N^{(j+1)}\cdots N^{(n)}.
\label{Sigma2} 
\end{multline}
Here, for $i=1,\ldots,n$, we have denoted  by $N^{(2i)}=Q^{(i)}$ and $N^{(2i-1)}=R^{(i)}$. Denote by $d\Sigma^{(1)}_{\cK,\hcK}$  and $d\Sigma^{(2)}_{\cK,\hcK}$ the terms we obtain when deriving with respect to $\mathcal{K}$ and $\hat{\mathcal{K}}$,   by $d\Sigma^{(1)}_{\cG,\hcG}$  and $d\Sigma^{(2)}_{\cG,\hcG}$ the terms we obtain when deriving with respect to
 $(I_k\otimes \cG)$ and $(I_k\otimes \hcG)$. With this notation, we write 
\begin{multline} \frac{d}{du} \mathbb{E}(\Pi_{t,u})\\= \frac{d}{dv}_{|_{v=u}}\mathbb{E}\int_0^v \left[d\Sigma^{(1)}_{\cK,\hcK}((I_k\otimes \cG_{t-u})\cK_s,\hcK_s (I_k\otimes \hcG_{t-u}),\mathcal{D}) +d\Sigma^{(2)}_{\cK,\hcK} ((I_k\otimes \cG_{t-u})\cK_s,\hcK_s (I_k\otimes \hcG_{t-u}),\mathcal{D})\right]\\- \frac{d}{dv}_{|_{v=t-u}}\mathbb{E}\int_0^v \left[d\Sigma^{(1)}_{\cG,\hcG} ((I_k\otimes \cG_s)\cK_u,\hcK_u (I_k\otimes \hcG_s),\mathcal{D})+d\Sigma^{(2)}_{\cG,\hcG}((I_k\otimes \cG_s)\cK_u,\hcK_u (I_k\otimes \hcG_s),\mathcal{D})\right],\end{multline}
where $s$ stands for the 
variable integration time. We now note that
\begin{multline*}   
d\Sigma^{(1)}_{\cK,\hcK}~\text{or} ~d\Sigma^{(1)}_{\cG,\hcG} = \text{a martingale part} \\+ \left\{ cR^{(1)}Q^{(1)}\cdots R^{(n)}Q^{(n)}+ 
 \sum_{i=1}^n c_i R^{(1)}Q^{(1)}\cdots Q^{(i-1)}R^{(i)}P_iP_i^*R^{(i)}Q^{(i)}\cdots R^{(n)}Q^{(n)}
\right\}dv \end{multline*}
for some constant complex numbers $c, c_i$'s that are identical in the terms $d\Sigma^{(1)}_{\cK,\hcK}$ and $d\Sigma^{(1)}_{\cG,\hcG}$. Hence, 
the contributions of $d\Sigma^{(1)}_{\cK,\hcK}$ and $d\Sigma^{(1)}_{\cG,\hcG}$ cancel each other in the computation of  $\frac{d}{du} \mathbb{E}(\Pi_{t,u})$. 
To compute the term $\Sigma^{(2)}$, we first recall that for any $\epsilon=(\epsilon_1,\epsilon_2)\in \{\cdot,\ast,-1\}^2$ and matrix $A$,  $A^{\epsilon}:=(A^{\epsilon_1})^{\epsilon_2},$
with $A^{\cdot}=A$. Now, according to the computations in Lemma \ref{Lemma:Calcul QuadraticCov},   $\forall \epsilon=(\epsilon_1,\epsilon_2), \hat \epsilon=(\hat \epsilon_1,\hat \epsilon_2)\in \{\cdot, -1,\ast\}^2$,
there exist noncommutative monomials $T_1^{ \epsilon,\hat \epsilon}$ and $T_2^{ \epsilon,\hat \epsilon}$  in two variables such that for any $N\times N$ adapted  matrix-valued process $V$,
\begin{equation}\label{bracketgeneral}d\langle G^\epsilon, G^{\hat \epsilon}\rangle_v \sharp V_v=T^{ \epsilon,\hat \epsilon}_1(G_v^{\epsilon},G_v^{\hat \epsilon} ) \tr_N(T^{ \epsilon,\hat \epsilon}_2(G_v^{\epsilon},G_v^{\hat \epsilon} ) V_v)dv.
\end{equation}
Then, as remarked after Lemma \ref{Lemma:Calcul QuadraticCov}, \eqref{tensor} readily yields that for any $Nk\times Nk$ matrix-valued process $V$,
\begin{equation}\label{bracketgeneraltensor}d\langle I_k\otimes G^\epsilon, I_k\otimes G^{\hat \epsilon}\rangle_v \sharp V_v=T^{ \epsilon,\hat \epsilon}_1(I_k\otimes G_v^{\epsilon},I_k\otimes G_v^{\hat \epsilon} ) (id_k \otimes \tr_N)(T^{ \epsilon,\hat \epsilon}_2(I_k\otimes G_v^{\epsilon},I_k\otimes G_v^{\hat \epsilon} ) V_v)dv,
\end{equation}
for the same noncommutative monomials $T_1^{ \epsilon,\hat \epsilon}$ and $T_2^{ \epsilon,\hat \epsilon}$. Then, considering the formulation in \eqref{compute brackets}, one can check that
\begin{multline*}d\Big\langle \int_0^{\cdot}((I_k\otimes G_{t-u})dK_s)^\epsilon, \int_0^{\cdot}((I_k\otimes G_{t-u})dK_s)^{\hat \epsilon}\Big\rangle_v \sharp V_v\\=P_1^{ \epsilon,\hat \epsilon}(I_k\otimes \cG_{t-u},\cK_v) \tr_{Nk}\left[P_2^{ \epsilon,\hat \epsilon}(I_k\otimes \cG_{t-u},\cK_v ) V_vP_3^{ \epsilon,\hat \epsilon}(I_k\otimes \cG_{t-u},\cK_v )\right] P_4^{ \epsilon,\hat \epsilon}(I_k\otimes \cG_{t-u},\cK_v )dv,
\end{multline*}
and 
\begin{multline*}d\Big\langle \int_0^{\cdot}((I_k\otimes dG_s)K_u)^\epsilon, \int_0^{\cdot}((I_k\otimes dG_s)K_u)^{\hat \epsilon}\Big\rangle_v \sharp V_v\\=P_1^{ \epsilon,\hat \epsilon}(I_k\otimes \cG_v,\cK_u) \left((id_k\otimes \tr_N)\left[P_2^{ \epsilon,\hat \epsilon}(I_k\otimes \cG_v,\cK_u ) V_vP_3^{ \epsilon,\hat \epsilon}(I_k\otimes \cG_v,\cK_u )\right]\otimes I_N\right) P_4^{ \epsilon,\hat \epsilon}(I_k\otimes \cG_v,\cK_u )dv
\end{multline*}
with the same monomials $P_i^{ \epsilon,\hat \epsilon}$'s.
Similarly 
\begin{multline*} d\Big\langle \int_0^{\cdot} (d\hat K_s (I_k\otimes \hat G_{t-u}))^\epsilon, \int_0^{\cdot}(d\hat K_s(I_k\otimes \hat G_{t-u}))^{\hat \epsilon}\Big\rangle_v \sharp V_v\\=\hat P_1^{ \epsilon,\hat \epsilon}(I_k\otimes \hcG_{t-u},\hcK_v) \tr_{Nk}\left[\hat P_2^{ \epsilon,\hat \epsilon}(I_k\otimes \hcG_{t-u},\hcK_v ) V_v\hat P_3^{ \epsilon,\hat \epsilon}(I_k\otimes \hcG_{t-u},\hcK_v )\right] \hat P_4^{ \epsilon,\hat \epsilon}(I_k\otimes \hcG_{t-u},\hcK_v )dv,
\end{multline*}
and
\begin{multline*} d\Big\langle \int_0^{\cdot}(\hat K_u(I_k\otimes d\hat G_s))^\epsilon, \int_0^{\cdot}(\hat K_u(I_k\otimes dG_s))^{\hat \epsilon}\Big\rangle_v \sharp V_v\\=\hat P_1^{ \epsilon,\hat \epsilon}(I_k\otimes \hcG_v,\hcK_u) \left((id_k\otimes \tr_N)\left[\hat P_2^{ \epsilon,\hat \epsilon}(I_k\otimes \hcG_v,\hcK_u ) V_v\hat P_3^{ \epsilon,\hat \epsilon}(I_k\otimes \hcG_v,\hcK_u )\right] \otimes I_N\right)\hat P_4^{ \epsilon,\hat \epsilon}(I_k\otimes \hcG_v,\hcK_u )dv, 
\end{multline*}
with the same monomials $\hat P_i^{ \epsilon,\hat \epsilon}$'s. {Comparing the above terms, we observe that they are always identical with exception of having  $\tr_{NK}$ and $(id_k \otimes \tr_{N})$ when we consider terms coming from \eqref{1st derivative term Fund theo} and \eqref{2nd derivative term Fund theo}, respectively.} Therefore, we can deduce using  \eqref{Itoproduit} and \eqref{compute brackets} that for any monomials  $Q_1$ and $Q_2$ in  noncommutative indeterminates, for any finite products $P_1$, $P_2$ and $P_3$ of monomials or resolvents of nonnegative polynomials, and deterministic matrices $D_1, D_2, D_3, D_4$ (that are only introduced here for Step 3),
\begin{align*}  
&\frac{d}{dv}_{|_{v=u}}\Bigg[\int_0^{v}P_1(I_k\otimes \cG_{t-u}\cK_{s}, \hcK_{s}I_k\otimes \hcG_{t-u},\mathcal{D} )D_1 \\&\hspace{3cm} \Big\{  d \big\langle  Q_1 (I_k\otimes \cG_{t-u}\cK_{\cdot},
 \hcK_{\cdot}I_k\otimes \hcG_{t-u} , \mathcal{D}), Q_2 (I_k\otimes \cG_{t-u}\cK_{\cdot},
 \hcK_{\cdot}I_k\otimes \hcG_{t-u} , \mathcal{D})\big \rangle_s
 \\&\hspace{4cm} 
\sharp \big[D_2P_3(I_k\otimes \cG_{t-u}\cK_{s}, \hcK_{s}I_k\otimes \hcG_{t-u} ,\mathcal{D})D_3\big]\Big\}    D_4P_2(I_k\otimes \cG_{t-u}\cK_{s}, \hcK_{s}I_k\otimes \hcG_{t-u} ,\mathcal{D})\Bigg]
\\&-\frac{d}{dv}_{|_{v=t-u}}\Bigg[\int_0^{v} D_1P_1(I_k\otimes \cG_s\cK_{u}, \hcK_{u} I_k\otimes\hcG_s,\mathcal{D} )\\&\Big\{
d \big\langle  Q_1(I_k\otimes \cG_{\cdot}\cK_{u}, \hcK_{u} I_k\otimes\hcG_{\cdot} ), Q_2(I_k\otimes\cG_{\cdot}\cK_{u}, \hcK_{u} I_k\otimes\hcG_{\cdot},\mathcal{D} )  \big\rangle_s 
\sharp \big[D_2P_3(I_k\otimes \cG_s\cK_{u}, \hcK_{u} I_k\otimes\hcG_s,\mathcal{D} ) D_3\big] \Big\} \\&
\hspace*{3cm} D_4 P_2(I_k\otimes \cG_s\cK_{u}, \hcK_{u} I_k\otimes\hcG_s,\mathcal{D})\Bigg]
\end{align*}
is a finite sum of terms of the form 
\begin{equation}\label{preuvecovariance}
   M_1\tr_{Nk}(M_2 )M_3- M_1 \left[(id_k\otimes tr_{N}) (M_2)\otimes I_N\right]M_3, \end{equation}
where the $M_i$'s are finite products of the $D_i$'s and of monomials or resolvents of nonnegative polynomials in the components of $\mathcal{D}$, $I_k\otimes  \cG_{t-u}, I_k\otimes\hat{\mathcal{G}}_{t-u}, \mathcal{K}_u, \hat{\mathcal {K}}_u$.
Proposition \ref{step1} readily follows from \eqref{Sigma2} using  \eqref{Itoresolvante} and \eqref{compute brackets}.
\end{proof}

\noindent 
\paragraph*{Step 2.} Having proven Proposition \ref{step1}, we now aim to express the terms of the form \eqref{Mu main prop} as covariances of normalized traces in the relevant $M$ and $\tilde{M}$ terms therein. We use an idea presented in Lemma \ref{trick} below, which is adapted from \cite{CGParraud} (see for instance Step 2 in the proof of Lemma 3.2 in \cite{ParraudGUE}). This trick enables us to rewrite the terms of \eqref{Mu main prop} obtained in Proposition \ref{step1} in terms of covariances.  Define \begin{equation}\label{def p ll'}
    p_{ll'}:=e_{ll'} \otimes I_N,
\end{equation} where $(e_{ll'})_{ll'}$ denotes the canonical basis of $M_k(\mathbb{C})$.  For any matrix-valued processes $A$ and $B$ in $M_{Nk}(\mathbb{C})$, we write
\begin{equation}\label{trick step 2}
\tr_{Nk}\big( A [(id_k\otimes \tr_{N}) (B)\otimes I_N] \big) = k \sum_{l, l'=1}^k \mathbb{E}\left[ \tr_{Nk}(A p_{ll'}) \tr_{Nk}( Bp_{l'l}) \right].    
\end{equation}
For a proof, see Step 2 in the proof of Lemma 3.2 in \cite{ParraudGUE} or Lemma 4.3  in \cite{CGParraud}. Relying on this, we obtain the following lemma, for which we provide a proof for the reader's convenience.
\begin{lemma}\label{trick}
Let $M$ and $\tilde M$ be finite products of monomials or resolvents of nonnegative polynomials in the components of $I_k\otimes  \cG_{t-u}, I_k\otimes\hat{\mathcal{G}}_{t-u}, \mathcal{K}_u, \hat{\mathcal {K}}_u$ and $I_k\otimes A^N$ defined above. Then
  \begin{eqnarray*}\lefteqn{\mathbb{E}_{\mathcal{K},\hat{\mathcal{K}}}\left[ \tr_{Nk}(M) \tr_{Nk}(\tilde M )\right]-\mathbb{E}_{\mathcal{K},\hat{\mathcal{K}}}\left[ \tr_{Nk}\left(\tilde M \{[(id_k\otimes \tr_{N}) M]\otimes I_N\} \right)\right]}\\
&=& \Cov_{\mathcal{K},\hat{\mathcal{K}}}(\tr_{Nk}(\tilde M), \tr_{Nk}( M))-k \sum_{l, l'=1}^k \Cov_{\mathcal{K},\hat{\mathcal{K}}}(\tr_{Nk}(\tilde Mp_{ll'}), \tr_{Nk}( Mp_{l'l})).
\end{eqnarray*}
\end{lemma}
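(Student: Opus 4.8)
The statement of Lemma~\ref{trick} is an identity relating the expression appearing in \eqref{Mu main prop} to covariances. The plan is to split the left-hand side into two pieces and treat each one separately using the definition of covariance and the trick \eqref{trick step 2}.

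\textbf{Approach.} Write $\mathbb{E}$ for $\mathbb{E}_{\mathcal{K},\hat{\mathcal{K}}}$ and $\Cov$ for $\Cov_{\mathcal{K},\hat{\mathcal{K}}}$ throughout. First, for the term $\mathbb{E}[\tr_{Nk}(M)\tr_{Nk}(\tilde M)]$, I simply recall the definition of covariance,
\[
\mathbb{E}\big[\tr_{Nk}(M)\tr_{Nk}(\tilde M)\big] = \Cov\big(\tr_{Nk}(\tilde M),\tr_{Nk}(M)\big) + \mathbb{E}\big[\tr_{Nk}(M)\big]\,\mathbb{E}\big[\tr_{Nk}(\tilde M)\big].
\]
Second, for the term $\mathbb{E}[\tr_{Nk}(\tilde M\{[(id_k\otimes\tr_N)M]\otimes I_N\})]$, I apply the trick \eqref{trick step 2} with $A=\tilde M$ and $B=M$; note that \eqref{trick step 2} as stated already has an expectation on its right-hand side, but here $A,B$ are random, so what is really being used is the deterministic matrix identity $\tr_{Nk}(A[(id_k\otimes\tr_N)(B)\otimes I_N]) = k\sum_{l,l'=1}^k \tr_{Nk}(Ap_{ll'})\tr_{Nk}(Bp_{l'l})$ (valid pointwise for any fixed matrices $A,B$, which is how \eqref{trick step 2} is proved in \cite{ParraudGUE,CGParraud}), and then I take the expectation of both sides. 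This gives
\[
\mathbb{E}\big[\tr_{Nk}(\tilde M\{[(id_k\otimes\tr_N)M]\otimes I_N\})\big] = k\sum_{l,l'=1}^k \mathbb{E}\big[\tr_{Nk}(\tilde Mp_{ll'})\tr_{Nk}(Mp_{l'l})\big],
\]
and again expanding each summand via the definition of covariance,
\[
\mathbb{E}\big[\tr_{Nk}(\tilde Mp_{ll'})\tr_{Nk}(Mp_{l'l})\big] = \Cov\big(\tr_{Nk}(\tilde Mp_{ll'}),\tr_{Nk}(Mp_{l'l})\big) + \mathbb{E}\big[\tr_{Nk}(\tilde Mp_{ll'})\big]\,\mathbb{E}\big[\tr_{Nk}(Mp_{l'l})\big].
\]

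\textbf{Combining.} Subtracting the two expressions, the covariance terms immediately produce the right-hand side of the lemma. It remains only to check that the product-of-expectations terms cancel: I need
\[
\mathbb{E}\big[\tr_{Nk}(M)\big]\,\mathbb{E}\big[\tr_{Nk}(\tilde M)\big] = k\sum_{l,l'=1}^k \mathbb{E}\big[\tr_{Nk}(\tilde Mp_{ll'})\big]\,\mathbb{E}\big[\tr_{Nk}(Mp_{l'l})\big].
\]
This follows by applying the same deterministic identity \eqref{trick step 2} (in its pointwise form) to the \emph{deterministic} matrices $A=\mathbb{E}[\tilde M]$ and $B=\mathbb{E}[M]$, giving $k\sum_{l,l'}\tr_{Nk}(\mathbb{E}[\tilde M]p_{ll'})\tr_{Nk}(\mathbb{E}[M]p_{l'l}) = \tr_{Nk}(\mathbb{E}[\tilde M][(id_k\otimes\tr_N)(\mathbb{E}[M])\otimes I_N])$, and then invoking the observation — which is the actual content hiding behind \eqref{trick step 2} — that $(id_k\otimes\tr_N)(B)\otimes I_N$ reduces to $\tr_{Nk}(B)\,I_{Nk}$ whenever $B$ has the block structure making $id_k\otimes\tr_N$ collapse, i.e. more simply that $\tr_{Nk}(A[(id_k\otimes\tr_N)(B)\otimes I_N]) $ and $\tr_{Nk}(A)\tr_{Nk}(B)$ agree after the $k\sum_{l,l'}$ rewriting; pulling expectations inside and using linearity closes the argument.

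\textbf{Main obstacle.} The calculation itself is routine bookkeeping; the only subtlety is being careful about \emph{which} quantities the trick \eqref{trick step 2} is applied to (random matrices pointwise, then take expectation, versus applying it to the already-averaged matrices), and making sure the two uses are compatible so that the product-of-means terms genuinely cancel rather than leaving a residue. I would write out the pointwise identity explicitly once and then apply it in both places, which makes the cancellation transparent.
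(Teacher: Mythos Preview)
Your overall skeleton is the same as the paper's: rewrite both terms via \eqref{trick step 2}, expand each product of traces as covariance plus product of means, and check that the product-of-means contributions cancel. The covariance bookkeeping is fine. The gap is in the cancellation step.

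You claim that
\[
\mathbb{E}\big[\tr_{Nk}(M)\big]\,\mathbb{E}\big[\tr_{Nk}(\tilde M)\big]
\;=\;
k\sum_{l,l'=1}^k \mathbb{E}\big[\tr_{Nk}(\tilde M p_{ll'})\big]\,\mathbb{E}\big[\tr_{Nk}(M p_{l'l})\big]
\]
follows by applying the pointwise identity \eqref{trick step 2} to $A=\mathbb{E}[\tilde M]$, $B=\mathbb{E}[M]$ and then asserting that $(id_k\otimes\tr_N)(B)\otimes I_N$ ``reduces to $\tr_{Nk}(B)I_{Nk}$ whenever $B$ has the block structure making $id_k\otimes\tr_N$ collapse''. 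But you never say what that block structure is or why $\mathbb{E}[M]$ has it, and without it the identity is simply false: for generic $A,B\in M_{Nk}(\mathbb{C})$ one has $\tr_{Nk}(A)\tr_{Nk}(B)\neq k\sum_{l,l'}\tr_{Nk}(Ap_{ll'})\tr_{Nk}(Bp_{l'l})$ (take $k=2$, $N=1$, $A=e_{12}$, $B=e_{21}$).

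What is actually needed---and what the paper uses explicitly---is the invariance of the law of $(\mathcal{K},\hat{\mathcal{K}})$ under conjugation by $U\otimes I_N$ for $U\in U(k)$. Since all the other ingredients of $M$ are of the form $I_k\otimes(\cdot)$ and hence fixed by such conjugations, this forces $\mathbb{E}_{\mathcal{K},\hat{\mathcal{K}}}[\tr_{Nk}(Mp_{ll'})]=0$ for $l\neq l'$ and $\mathbb{E}_{\mathcal{K},\hat{\mathcal{K}}}[\tr_{Nk}(Mp_{ll})]$ to be independent of $l$ (equivalently, $\mathbb{E}_{\mathcal{K},\hat{\mathcal{K}}}[M]\in I_k\otimes M_N(\mathbb{C})$). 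With this in hand both product-of-means sums collapse to $k^2\,\mathbb{E}[\tr_{Nk}(\tilde M p_{11})]\,\mathbb{E}[\tr_{Nk}(M p_{11})]$ and cancel. You should state and use this unitary-invariance fact explicitly; it is the only nontrivial input in the lemma.
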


\begin{proof}
Using \eqref{trick step 2}, we start by writing 
\begin{multline*} \E_{\mathcal{K},\hat{\mathcal{K}}}\big[ \tr_{Nk}(M) \tr_{Nk}(\tilde M )\big]  -\mathbb{E}_{\mathcal{K},\hat{\mathcal{K}}}\big[ \tr_{Nk}\big(\tilde M [(id_k\otimes \tr_{N}) (M)\otimes I_N] \big)\big]\\
= \sum_{l, l'=1}^k \mathbb{E}_{\mathcal{K},\hat{\mathcal{K}}}\left[ \tr_{Nk}(\tilde Mp_{ll}) \tr_{Nk}( Mp_{l'l'}) \right]
-k \sum_{l, l'=1}^k \mathbb{E}_{\mathcal{K},\hat{\mathcal{K}}}\left[ \tr_{Nk}(\tilde Mp_{ll'}) \tr_{Nk}( Mp_{l'l}) \right].
\end{multline*}
 Using the invariance of the distribution of $K,\hat K$ by unitary conjugation, we have that 
\begin{align*}
&\mathbb{E}_{\mathcal{K},\hat{\mathcal{K}}}\left[ \tr_{Nk}(  Mp_{ll}) \right]=\mathbb{E}_{\mathcal{K},\hat{\mathcal{K}}}\left[ \tr_{NK}( Mp_{11}) \right],   \qquad 
 \mathbb{E}_{\mathcal{K},\hat{\mathcal{K}}}\left[ \tr_{Nk}( Mp_{ll'}) \right]=0 \quad \text{if  } l\neq l' . \\
& \mathbb{E}_{\mathcal{K},\hat{\mathcal{K}}}\left[ \tr_{Nk}(\tilde Mp_{ll}) \right]=\mathbb{E}_{\mathcal{K},\hat{\mathcal{K}}}\left[ \tr_{NK}(\tilde Mp_{11}) \right],   \quad \,
 \mathbb{E}_{\mathcal{K},\hat{\mathcal{K}}}\left[ \tr_{Nk}(\tilde Mp_{ll'}) \right]=0 \quad \text{if  } l\neq l' . 
\end{align*} 
Therefore, we obtain that 
\begin{align*}&{\mathbb{E}_{\mathcal{K},\hat{\mathcal{K}}}\left[ \tr_{Nk}M \tr_{Nk}\tilde M \right]-\mathbb{E}_{\mathcal{K},\hat{\mathcal{K}}}\left[ \tr_{Nk}\left(\tilde M (id_k\otimes \tr_{N}) M \right)\right]}\\
&= \sum_{l, l'=1}^k \left\{\mathbb{E}_{\mathcal{K},\hat{\mathcal{K}}}\big[ \tr_{Nk}(\tilde Mp_{ll})\big]\mathbb{E}_{\mathcal{K},\hat{\mathcal{K}}}\left[ \tr_{Nk}( Mp_{l'l'}) \right]+\Cov_{\mathcal{K},\hat{\mathcal{K}}}(\tr_{Nk}(\tilde Mp_{ll}), \tr_{Nk}( Mp_{l'l'}))\right\}
\\&-k \sum_{l, l'=1}^k \left\{\mathbb{E}_{\mathcal{K},\hat{\mathcal{K}}}\big[ \tr_{Nk}(\tilde Mp_{ll'})\big]\mathbb{E}_{\mathcal{K},\hat{\mathcal{K}}}\left[ \tr_{Nk}( Mp_{l'l}) \right]+\Cov_{\mathcal{K},\hat{\mathcal{K}}}(\tr_{Nk}(\tilde Mp_{ll'}), \tr_{Nk}( Mp_{l'l}))\right\}\\
&=  k^2\mathbb{E}_{\mathcal{K},\hat{\mathcal{K}}}\big[ \tr_{Nk}(\tilde Mp_{11})\big]\mathbb{E}_{\mathcal{K},\hat{\mathcal{K}}}\left[ \tr_{Nk}( Mp_{11})\right]+\sum_{l, l'=1}^k \Cov_{\mathcal{K},\hat{\mathcal{K}}}(\tr_{Nk}(\tilde Mp_{ll}), \tr_{Nk}( Mp_{l'l'}))\\&-k^2 \mathbb{E}_{\mathcal{K},\hat{\mathcal{K}}}\big[ \tr_{Nk}(\tilde Mp_{11})\big]\mathbb{E}_{\mathcal{K},\hat{\mathcal{K}}}\left[ \tr_{Nk}(Mp_{11})\right]-k \sum_{l, l'=1}^k \Cov_{\mathcal{K},\hat{\mathcal{K}}}(\tr_{Nk}(\tilde Mp_{ll'}), \tr_{Nk}( Mp_{l'l}))\\
&=  \sum_{l, l'=1}^k \Cov_{\mathcal{K},\hat{\mathcal{K}}}(\tr_{Nk}(\tilde Mp_{ll}), \tr_{Nk}( Mp_{l'l'}))-k \sum_{l, l'=1}^k \Cov_{\mathcal{K},\hat{\mathcal{K}}}(\tr_{Nk}(\tilde Mp_{ll'}), \tr_{Nk}( Mp_{l'l}))\\
&=  \Cov_{\mathcal{K},\hat{\mathcal{K}}}(\tr_{Nk}(\tilde M), \tr_{Nk}( M))-k \sum_{l, l'=1}^k \Cov_{\mathcal{K},\hat{\mathcal{K}}}(\tr_{Nk}(\tilde Mp_{ll'}), \tr_{Nk}( Mp_{l'l})).
\end{align*}\end{proof}

\paragraph*{Step 3.} By Proposition \ref{step1}, Theorem \ref{theo:fundamental} follows once we prove that the covariances obtained in  Lemma \ref{trick} are of the order $(Nk)^{-2}.$  This will be the aim of this part. 

Recall the definitions of the 
$\mathcal{K}_u, \hat{\mathcal {K}}_u$'s defined in 
\eqref{K Stocg diff eq} and \eqref{K hat Stocg diff eq}.  Then we consider new independent multiplicative $(\lambda,\tau)$-Brownian motions $\chi_t$ and $\tilde{\chi}_t$ on $M_{Nk}(\mathbb{C})$ defined as a solution of the stochastic equations \eqref{K hat Stocg diff eq}, that have an opposite multiplication sense to $\mathcal{K}_t$. Similarly, we consider $\underline{\chi}_t$ and $\underline{\tilde{\chi}}_t$ that satisfy the equations \eqref{K Stocg diff eq}, which have an opposite multiplication sense to $\hat{\mathcal{K}}_t$.
\begin{proposition}\label{prop:est covariance}
Let $D_1$ and $D_2$ be $Nk \times Nk$ deterministic matrices and  $\Pi$ and $\tilde \Pi$ be finite products of monomials or resolvents of nonnegative polynomials in $
\mathcal{K}_u, \hat{\mathcal {K}}_u$ (defined above) and a set of deterministic $Nk \times Nk$ matrices $\mathcal {D}$. Then 
\[
\Cov\Big(\tr_{Nk}\big(\Pi(\mathcal{K}_u, \hat{\mathcal {K}}_u,\mathcal {D})D_1\big) , \tr_{Nk}\big(\tilde{\Pi}(\mathcal{K}_u, \hat{\mathcal {K}}_u,\mathcal {D})D_2\big) \Big)
\]
is a finite sum of terms of the form 
\begin{equation*}
\frac{1}{(Nk)^2}\int_0^u \E\big[ \tr_{Nk}\big(D_1 \Theta_1 D_2 \Theta_2 \big)\big] \ dv
\end{equation*}
where $\Theta_1$ and $\Theta_2$ are finite products of monomials or resolvents of nonnegative polynomials in the components of $
\mathcal{K}_v, \hat{\mathcal {K}}_v$, $\chi_{u-v}$, $\tilde{\chi}_{u-v}$,  $\underline{\chi}_{u-v}$, $\underline{\tilde{\chi}}_{u-v}$ and $\mathcal{D}$. In particular, for $D_1=D_2=I_{Nk}$,
and $Q, \tilde Q$ finite products of monomials or resolvents of nonnegative polynomials, we have that  
\begin{multline*}
\E\Cov_{|\cG,\hcG }\Big(\tr_{Nk}\big(Q(\mathcal{K}_u, \hat{\mathcal {K}}_u, I_k\otimes \mathcal{G}_{t-u}, I_k\otimes \hat{\mathcal{G}}_{t-u},I_k\otimes A^N)\big) , \tr_{Nk}\big(\tilde{Q}(\mathcal{K}_u, \hat{\mathcal {K}}_u, I_k\otimes \mathcal{G}_{t-u}, I_k\otimes \hat{\mathcal{G}}_{t-u},I_k\otimes A^N)\big) \Big)\\ = O\Big(\frac{1}{(Nk)^2}\Big).
\end{multline*}

\end{proposition}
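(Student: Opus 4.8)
\emph{Plan of proof.} The argument is the covariance analogue of the interpolation already used for Lemma~\ref{cov} and Proposition~\ref{step1}, now carried out in $M_{Nk}(\mathbb{C})$ with the extra feature that a left--invariant family ($\cK$) and a right--invariant family ($\hcK$) are both present. Throughout we regard $\mathcal{D}$ as deterministic. For $v\in[0,u]$ set
\[
h(v):=\E\big[\,\tr_{Nk}\big(\Pi(\chi_{u-v}\cK_v,\hcK_v\underline{\chi}_{u-v},\mathcal{D})D_1\big)\,\tr_{Nk}\big(\tilde\Pi(\tilde\chi_{u-v}\cK_v,\hcK_v\underline{\tilde\chi}_{u-v},\mathcal{D})D_2\big)\,\big],
\]
so that $\cK_v,\hcK_v$ (at the ``early'' time $v$) are shared by the two traces, while the ``late'' increments $\chi_{u-v},\underline{\chi}_{u-v}$ used in $\Pi$ are independent of $\tilde\chi_{u-v},\underline{\tilde\chi}_{u-v}$ used in $\tilde\Pi$. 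At $v=u$ all late increments equal $I_{Nk}$, so $h(u)$ is the product of the two traces whose covariance we want; at $v=0$ one has $\cK_0=\hcK_0=I_{Nk}$, so the two traces become functions of independent families and $h(0)$ factorizes into the product of the two marginal expectations, because the $*$-distribution of $\cK_u$ (resp.\ $\hcK_u$) coincides with that of $\chi_u$ (resp.\ $\underline{\chi}_u$); this coincidence is an instance of the distributional identities already invoked (Proposition~\ref{samelaw}) and rests on the transposition invariance of the elliptic Brownian motion. Hence $\Cov(\cdots)=h(u)-h(0)=\int_0^u h'(v)\,dv$.

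Next, differentiate: $h'(v)$ is the sum of the derivative in the shared $\cK_v,\hcK_v$ and, with a minus sign, the derivative in the late increments. Expanding both by It\^o's formula via \eqref{Itoproduit} and \eqref{Itoresolvante} and taking the expectation kills the martingale parts. The key cancellation is that the finite--variation ``self'' terms coming from differentiating $\chi_{u-v}$ (or $\underline{\chi}_{u-v}$, etc.) coincide with those coming from differentiating $\cK_v$ (resp.\ $\hcK_v$): since $\chi,\tilde\chi$ solve \eqref{K hat Stocg diff eq} and $\underline{\chi},\underline{\tilde\chi}$ solve \eqref{K Stocg diff eq}, i.e.\ have the multiplication sense \emph{opposite} to that of $\cK$ (resp.\ $\hcK$), the It\^o differential $i\sigma_\ell\,dY^{(\ell)}$ (resp.\ $i\sigma_\ell\,d\hat Y^{(\ell)}$) is inserted at the same position of the products $\chi_{u-v}\cK_v$ and $\hcK_v\underline{\chi}_{u-v}$ regardless of which of the two time parameters is being varied; this is exactly the identity \eqref{equalitybrackets} in the present context, so those self terms cancel. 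What survives is the cross quadratic covariation pairing the martingale part of the first trace with that of the second, and because $\chi\perp\tilde\chi$ and $\underline{\chi}\perp\underline{\tilde\chi}$ this pairing runs only through the shared generators $\cK_v,\hcK_v$. Using the bracket table of Lemma~\ref{Lemma:Calcul QuadraticCov} (each elliptic bracket of the shared noises produces a factor of the form $\sigma_\ell^2\,Q_1\,\tr_{Nk}(Q_2\,\cdot\,)$ for monomials $Q_1,Q_2$, cf.\ \eqref{crochetsgeneraux}), writing $\tr_{Nk}=\frac{1}{Nk}\Tr$ and $\Tr=Nk\,\tr_{Nk}$, and recombining the two scalar traces through the bracket exactly as in the derivation of \eqref{expressioncondense}, we obtain that $h'(v)$ is a finite sum of terms $\frac{1}{(Nk)^2}\,\E\,\tr_{Nk}(D_1\Theta_1 D_2\Theta_2)$, where the cyclicity of $\tr_{Nk}$ is used to place $D_1,D_2$ as displayed and $\Theta_1,\Theta_2$ are finite products of monomials or resolvents of nonnegative polynomials in the entries of $\cK_v,\hcK_v,\chi_{u-v},\tilde\chi_{u-v},\underline{\chi}_{u-v},\underline{\tilde\chi}_{u-v},\mathcal{D}$. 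Integrating in $v$ gives the first assertion.

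For the ``in particular'' statement, apply the first part with $D_1=D_2=I_{Nk}$ and $\mathcal{D}=(I_k\otimes\cG_{t-u},I_k\otimes\hcG_{t-u},I_k\otimes A^N)$, which is independent of $\cK,\hcK$ and of all auxiliary increments, hence legitimate as conditionally deterministic data given $\cG,\hcG$. Taking the outer expectation and using the tower property, $\E\,\Cov_{|\cG,\hcG}(\cdots)$ is a finite sum of terms $\frac{1}{(Nk)^2}\int_0^u\E\,\tr_{Nk}(\Theta_1\Theta_2)\,dv$. It remains to bound $|\E\,\tr_{Nk}(\Theta_1\Theta_2)|$ uniformly in $N,k$ and $v\in[0,t]$. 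The resolvents inside $\Theta_1,\Theta_2$ are resolvents of nonnegative operators evaluated at some fixed $z_j\in\mathbb{C}\setminus\mathbb{R}$, hence of norm at most $|\Im z_j|^{-1}$; the Brownian motions $\cK_v,\hcK_v,\chi_{u-v},\tilde\chi_{u-v},\underline{\chi}_{u-v},\underline{\tilde\chi}_{u-v}$ and the matrices $I_k\otimes G^{(\ell)}_{t-u},I_k\otimes\hat G^{(\ell)}_{t-u}$ have $L^p(\tr_{Nk})$--norms bounded uniformly in $N,k$ for $v\in[0,t]$ by Corollary~\ref{NormeLp} (for the tensored matrices because $I_k\otimes X$ has the same $\tr_{Nk}$--moments as $X$ has $\tr_N$--moments), and $\|I_k\otimes A^N\|=\|A^N\|$ is uniformly bounded; H\"older's inequality for $\tr_{Nk}$ then gives the desired uniform bound. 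Since the integrand is uniformly bounded on $v\in[0,u]\subseteq[0,t]$, this yields $O(1/(Nk)^2)$.

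\emph{Main obstacle.} The delicate point is the second paragraph: one must verify that the specific left/right invariance chosen for the four auxiliary families $\chi,\tilde\chi,\underline{\chi},\underline{\tilde\chi}$ makes \emph{all} the finite--variation self terms cancel simultaneously on the $\cK$--side and on the $\hcK$--side, and that after recombining the two scalar traces the surviving cross bracket genuinely has the product form $D_1\Theta_1 D_2\Theta_2$ with $\Theta_1,\Theta_2$ of the stated type. This amounts to rerunning the bracket computations of Lemma~\ref{cov} and Proposition~\ref{step1} carefully in the present setting, which now mixes left-- and right--invariant families and carries general matrices $D_1,D_2$ rather than the rank--one $E_{ij}$.
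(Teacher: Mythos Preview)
Your proposal is correct and follows essentially the same route as the paper: the same interpolation $h(v)$, the same use of the opposite left/right multiplication senses for $\chi,\tilde\chi,\underline{\chi},\underline{\tilde\chi}$ to kill the self terms, the same observation that independence of the late increments leaves only the cross bracket through the shared $\cK_v,\hcK_v$, and the same recombination of the two scalar traces via unit matrices to reach the $D_1\Theta_1 D_2\Theta_2$ form, followed by H\"older and Corollary~\ref{NormeLp} for the uniform bound. The paper phrases the cancellation of the intra-trace quadratic variation terms slightly differently (it invokes that the analogue of \eqref{preuvecovariance} collapses because here both sides produce the full $\tr_{Nk}$, i.e.\ the ``$k=1$'' case of that identity), but this is exactly your ``self terms coincide'' statement in different words.
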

\begin{proof}
To control the covariance, we will, as before, use interpolation. For this, we define for any $v \in [0,u]$, 
\[
h(v)= \E\Big[\tr_{Nk}\big(\Pi(\chi_{u-v}\mathcal{K}_{v}, \hat{\mathcal {K}}_v \underline{{\chi}}_{u-v},\mathcal {D})D_1\big)  \tr_{Nk}\big(\tilde{\Pi}(\tilde{\chi}_{u-v}\mathcal{K}_v, \hat{\mathcal {K}}_v\underline{\tilde{\chi}}_{u-v},\mathcal {D} )D_2\big)\Big],
\]
so that we write 
\[
\Cov\Big(\tr_{Nk}\big(\Pi(\mathcal{K}_u, \hat{\mathcal {K}}_u,\mathcal {D})D_1\big) , \tr_{Nk}\big(\tilde{\Pi}(\mathcal{K}_u, \hat{\mathcal {K}}_u,\mathcal {D})D_2\big) \Big)
=h(u)-h(0)= \int_0^u h'(v) \ dv.
\]
Computing the derivative, we have for any $s_1$ and 
\begin{align*}
    h'(v)=  &\frac{d}{ds}_{|_{s=v}} \mathbb{E}\left[\tr_{Nk}\big(\Pi(\chi_{u-v}\mathcal{K}_{s}, \hat{\mathcal {K}}_s \underline{{\chi}}_{u-v},\mathcal {D})D_1\big)  \tr_{Nk}\big(\tilde{\Pi}(\tilde{\chi}_{u-v}\mathcal{K}_s, \hat{\mathcal {K}}_s\underline{\tilde{\chi}}_{u-v},\mathcal {D} )D_2\big)\right]
    \\ &-
    \frac{d}{ds}_{|_{s=u-v}} \mathbb{E}\left[\tr_{Nk}\big(\Pi(\chi_{s}\mathcal{K}_{v}, \hat{\mathcal {K}}_v \underline{{\chi}}_{s},\mathcal {D})D_1\big)  \tr_{Nk}\big(\tilde{\Pi}(\tilde{\chi}_{s}\mathcal{K}_v, \hat{\mathcal {K}}_v\underline{\tilde{\chi}}_{s},\mathcal {D} )D_2\big)
    \right].
\end{align*}
We have that for any $s_1$ and $s_2$, 
\begin{multline*}\tr_{Nk}\big(\Pi(\chi_{s_1}\mathcal{K}_{s_2}, \hat{\mathcal {K}}_{s_2} \underline{{\chi}}_{s_1},\mathcal {D}) D_1\big) \tr_{Nk}\big(\tilde \Pi(\tilde{\chi}_{s_1}\mathcal{K}_{s_2}, \hat{\mathcal {K}}_{s_2}\underline{\tilde{\chi}}_{s_1},\mathcal {D} ) D_2\big)
\\=\frac{1}{Nk} \sum_{i,j=1}^{Nk}\tr_{Nk} \big(\mathcal{E}_{ji}\Pi(\chi_{s_1}\mathcal{K}_{s_2}, \hat{\mathcal {K}}_{s_2} \underline{{\chi}}_{s_1},\mathcal {D}) D_1 \mathcal{E}_{ij} D_2 \tilde \Pi (\tilde{\chi}_{s_1}\mathcal{K}_{s_2}, \hat{\mathcal {K}}_{s_2}\underline{\tilde{\chi}}_{s_1},\mathcal {D} )\big), \end{multline*}
where the $\mathcal E_{ij}$'s denote the unit matrices of $M_{Nk}(\mathbb{C})$.
$\Pi$ and $\tilde \Pi$ can be written, for some $p$ and $n$, as 
$$\Pi= Q^{(0)} R^{(1)}Q^{(1)}\cdots R^{(p-1)}Q^{(p-1)}, $$
$$\tilde \Pi= Q^{(p)} R^{(p+1)}Q^{(p+1)}\cdots R^{(n)}Q^{(n)}, $$
where the $Q^{(i)}$'s are monomials and the $R^{(i)}$'s are  resolvents of nonnegative polynomials $P_iP_i^*$'s.
Thus, as in the proof of Proposition \ref{step1}, deriving with respect to 
$\mathcal{K}$ and $\hat{\mathcal {K}}$ as well as deriving with respect to 
${\chi}, \tilde{\chi}, \underline{{\chi}}, \underline{\tilde{\chi}}$,
we obtain that 
$$d \Pi D_1 \mathcal{E}_{ij} D_2 \tilde \Pi = d\Sigma_1+d\Sigma_2,$$
where $\Sigma_1$ does not contribute in the computation of  $h'(v)$ and 
\begin{eqnarray*}
 d\Sigma^{(2)}&=&    \sum_{i=1, i\neq p}^n Q^{(0)}R^{(1)}Q^{(1)}\cdots Q^{(i-1)}R^{(i)}\left(d\langle P_iP_i^*, P_iP_i^*\rangle \sharp R^{(i)}\right)  R^{(i)}Q^{(i)}\cdots R^{(n)}Q^{(n)}\nonumber \\
&&+\sum_{i<j\leq 2p-2}N_t^{(1)}\cdots N^{(i-1)}\left(d<N^{(i)}, N^{(j)}> \sharp [N^{(i+1)}\cdots N^{(j-1)}]\right)N^{(j+1)}\cdots N^{(n)}\\
&&+\sum_{2p+2\leq i<j}N^{(1)}\cdots N^{(i-1)}\left(d<N^{(i)}, N^{(j)}> \sharp [N^{(i+1)}\cdots N^{(j-1)}]\right)N^{(j+1)}\cdots N^{(n)}\\
&&+\sum_{\substack{i\leq 2p-2\\j \geq 2p+2}}N^{(1)}\cdots N^{(i-1)}\left(d<N^{(i)}, N^{(j)}> \sharp [N^{(i+1)}\cdots N^{(j-1)}]\right)N^{(j+1)}\cdots N^{(n)}
\end{eqnarray*}
where, $N^{(2p)}= D_1 \mathcal{E}_{ij} D_2$, $N^{(2p-1)}=N^{(2p+1)}=1$
and 
for $i=0,\ldots,n$, $i\neq 2p-1, 2p, 2p +1,$ $N^{(2i)}=Q^{(i)}$ and $N^{(2i-1)}=R^{(i)}.$ Note that the deterministic matrix $ D_1 \mathcal{E}_{ij} D_2$ does not affect the following arguments.
Note that, the term  \eqref{preuvecovariance} vanishes for $k=1$, and therefore, the first three lines of $d\Sigma^{(2)}$ do not contribute to the computation of $h'(v)$. 
Moreover, the terms obtained by deriving  with respect to 
${\chi}, \tilde{\chi}, \underline{{\chi}}, \underline{\tilde{\chi}}$ in 
the last line vanish since $ {{\chi}},  \underline{{\chi}}$ and ${\tilde{\chi}},\underline{\tilde{\chi}}$ involve independent elliptic Brownian motions. Therefore, the only contributing terms are those obtained when deriving in the last line with respect to $\cK$ and $\hat{\mathcal {K}}$.

Hence, denoting by  $T_1 $ and $T_2$ either a monomial or a resolvent of some nonnegative polynomial and by the $\Xi_i$'s  finite products of monomials or resolvents of nonnegative polynomials in the components of $
\mathcal{K}_s, \hat{\mathcal {K}}_s$, $\chi_{u-v}$, $\tilde{\chi}_{u-v}$,  $\underline{\chi}_{u-v}$, $\underline{\tilde{\chi}}_{u-v}$ and $\mathcal {D}$, we can see that $h'(v)$ is a finite sum of terms of the form
\begin{multline*}
 \frac{1}{Nk} \sum_{i,j=1}^{Nk} \E \tr_{Nk} \frac{d}{ds}_{|_{s=v}}\Big[ \int_0^s\mathcal{E}_{ji} \Xi_1 \ d\big\langle T_1 (\chi_{u-v}\mathcal{K}, \hat{\mathcal {K}} \underline{{\chi}}_{u-v},\mathcal {D}) , T_2 (\tilde{\chi}_{u-v}\mathcal{K}, \hat{\mathcal {K}}\underline{\tilde{\chi}}_{u-v},\mathcal {D} )D_2\big)  \big\rangle_s \\ \sharp  \big( \Xi_2 D_1 \mathcal{E}_{ij} D_2 \Xi_3\big) \Xi_4\Big],
\end{multline*}
which is in turn, by using \eqref{compute brackets}, \eqref{Itoproduit}, \eqref{Itoresolvante}  and  \eqref{crochetsgeneraux}, a finite sum of terms of the form
\[
\frac{1}{Nk} \sum_{i,j=1}^{Nk} \E \Big[  \tr_{Nk} \big( \mathcal{E}_{ji}\Theta_1 \big) \tr_{Nk} \big( D_1 \mathcal{E}_{ij} D_2\Theta_2 \big)\Big] = \frac{1}{(Nk)^2}\E \tr_{Nk} \big(D_2 \Theta_2 D_1 \Theta_1 \big).
\]  
where the $\Theta_i$'s are finite products of monomials or resolvents of nonnegative polynomials in the components of $\mathcal{K}_v, \hat{\mathcal {K}}_v$, $\chi_{u-v}$, $\tilde{\chi}_{u-v}$,  $\underline{\chi}_{u-v}$, $\underline{\tilde{\chi}}_{u-v}$ and $\mathcal {D}$.  
This ends the proof of the first statement.  In order to prove the second claim, we need to prove that for any finite products of monomials or resolvents of nonnegative polynomials in the components of $\mathcal{K}_v, \hat{\mathcal {K}}_v$, $\chi_{u-v}$, $\tilde{\chi}_{u-v}$,  $\underline{\chi}_{u-v}$, $\underline{\tilde{\chi}}_{u-v}$, $I_k \otimes \cG_{t-u} $,  $I_k \otimes \hcG_{t-u} $ and $I_k\otimes A^N$, there exists a constant $C_t$ such that 
\[
\mathbb{E} [{\tr_{Nk}(\Pi)}]\leq C_t.
\]
Note that $\Pi$ is an expression of the form 
\[\Pi= R_0^{\ell_0} \prod_{i=1}^m P_{i,1}\dots P_{i,m_i} R_i^{\ell_i} \]
where $m\geq 1$, $\ell_0,\dots ,\ell_m , k_1, \dots, k_m \in \mathbb{N}$, the $R_i$'s are resolvents at points $z_i$'s of nonnegative polynomials in  variables listed above while the $P_{i,j}$'s are  polynomials in only a single one of those variables. Then by H\"older's inequality we obtain
\begin{align}\label{bounddness}
\mathbb{E} [{\tr_{Nk}(\Pi)}]\leq \frac{1}{\left|\Im (z_0)\right|^{\ell_0}} \cdots \frac{1}{\left|\Im (z_m)\right|^{\ell_m}} \prod_{i=1}^m \prod_{j=1}^{k_i}\| P_{i,j}\|_{L_p},    
\end{align}
where $p$ is the sum of the degrees of the polynomials $P_{i,j}$ for $i=1,\dots,m$ and $j=1,\dots ,m_i$. We recall that the $A^N_i$'s are uniformly bounded over $N$ in operator norm and end the proof by noting that Corollary \ref{NormeLp} guarantees the existence of a constant $C_{i,j}(t)$ such that $\| P_{i,j}\|_{L_p} \leq C_{i,j}(t)$. 
\end{proof}

\begin{cor}
 Consider the matrices $p_{ll'}$  defined in \eqref{def p ll'} and let $M$ and $\tilde M$ be finite products of monomials or resolvents of nonnegative polynomials in the components of $I_k\otimes  \cG_{t-u}, I_k\otimes\hat{\mathcal{G}}_{t-u}, \mathcal{K}_u, \hat{\mathcal {K}}_u$ and $I_k\otimes A^N$, defined above. Denoting by $\mathbb{E}_{|{\cG, \hcG}}$, the conditional expectation with respect to $\{\cG, \hcG\}$, we obtain 
\begin{multline*}
 k \sum_{l, l'=1}^k \mathbb{E} \Cov_{|{\cG, \hcG}}(\tr_{Nk}(\tilde Mp_{ll'}), \tr_{Nk}( Mp_{l'l}))  
 \\
  =\E\int_0^u dv\frac{1}{N^2} \tr_N\left[\mathbb{E}_{|{\cG, \hcG}}\left[
(tr_k\otimes id_N) \left( \Pi_1\right) \right]\mathbb{E}_{|{\cG, \hcG}} \left[(\tr_k\otimes id_N) \left(\Pi_2\right)\right]\right]+O\Big(\frac{1}{Nk^2}\Big) ,
\end{multline*}
where the ${\Pi_i}$'s  are finite sums of finite  products of monomials or resolvents of nonnegative polynomials in the components of $I_k\otimes  A^N$, $ I_k\otimes  \cG_{t-u},$ $ I_k\otimes\hat{\mathcal{G}}_{t-u}, 
\mathcal{K}_v$, $\hat{\mathcal {K}}_v$, $\chi_{u-v}$, $\tilde{\chi}_{u-v}$,  $\underline{\chi}_{u-v}$ and $\underline{\tilde{\chi}}_{u-v}$.    
\end{cor}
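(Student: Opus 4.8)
The plan is to combine the two facts already at our disposal: the identity of Lemma~\ref{trick}, which rewrites the "problematic" difference appearing in $\mathbf{M}_u$ of Proposition~\ref{step1} as a main covariance term minus the weighted sum $k\sum_{l,l'}\Cov(\ldots,p_{ll'},\ldots,p_{l'l})$; and Proposition~\ref{prop:est covariance}, which expresses each such conditional covariance (conditionally on $\cG,\hcG$, so that the only randomness left is that of the $\mathcal{K},\hat{\mathcal{K}}$ and the auxiliary interpolating processes $\chi,\tilde\chi,\underline\chi,\underline{\tilde\chi}$) as $\tfrac{1}{(Nk)^2}\int_0^u\E[\tr_{Nk}(D_1\Theta_1 D_2\Theta_2)]\,dv$. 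First I would apply Proposition~\ref{prop:est covariance} with $D_1=p_{l'l}$, $D_2=p_{ll'}$, $\Pi=\tilde M$, $\tilde\Pi=M$, conditionally on $\{\cG,\hcG\}$ (so the $I_k\otimes\cG_{t-u}$, $I_k\otimes\hcG_{t-u}$ and $I_k\otimes A^N$ play the role of the deterministic tuple $\mathcal{D}$). This yields
\[
k\sum_{l,l'=1}^k\Cov_{|\cG,\hcG}\big(\tr_{Nk}(\tilde Mp_{ll'}),\tr_{Nk}(Mp_{l'l})\big)
= k\sum_{l,l'=1}^k\frac{1}{(Nk)^2}\int_0^u\E_{|\cG,\hcG}\big[\tr_{Nk}(p_{l'l}\,\Theta_1\,p_{ll'}\,\Theta_2)\big]\,dv,
\]
where the $\Theta_i$ are finite products of monomials or resolvents of nonnegative polynomials in the components of $I_k\otimes A^N$, $I_k\otimes\cG_{t-u}$, $I_k\otimes\hcG_{t-u}$, $\mathcal{K}_v$, $\hat{\mathcal{K}}_v$, $\chi_{u-v},\tilde\chi_{u-v},\underline\chi_{u-v},\underline{\tilde\chi}_{u-v}$.

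The next key step is to carry out the sum over $l,l'$ using the algebraic identity $\sum_{l,l'=1}^k p_{l'l}\,A\,p_{ll'} = (\tr_k\otimes\,\mathrm{id}_N)(A)\otimes I_k$ — equivalently, for $A=\sum_{a,b}e_{ab}\otimes A^{(a,b)}$ one has $p_{l'l}Ap_{ll'}=e_{l'l'}\otimes A^{(l,l)}$, so that summing gives $I_k\otimes\big(\sum_l A^{(l,l)}\big)=I_k\otimes (\tr_k\otimes\mathrm{id}_N)(A)\cdot k$; keeping track of the normalizations, $\tfrac{k}{(Nk)^2}\sum_{l,l'}\tr_{Nk}(p_{l'l}\Theta_1 p_{ll'}\Theta_2)$ collapses to $\tfrac{1}{N^2}\tr_N\big[(\tr_k\otimes\mathrm{id}_N)(\Theta_1)\,(\tr_k\otimes\mathrm{id}_N)(\Theta_2)\big]$, up to the standard care with which of the two $\Theta$'s the deterministic insertion sits between. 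Because each $\Theta_i$ is a polynomial/resolvent expression in the four families $\mathcal{K}_v,\hat{\mathcal{K}}_v$ and the four $\chi$-processes together with the (here conditionally deterministic) $I_k\otimes$-objects, applying $(\tr_k\otimes\mathrm{id}_N)$ and then $\E_{|\cG,\hcG}$ produces exactly the $\Pi_1,\Pi_2$ of the statement — finite sums of finite products of monomials or resolvents of nonnegative polynomials in $I_k\otimes A^N$, $I_k\otimes\cG_{t-u}$, $I_k\otimes\hcG_{t-u}$, $\mathcal{K}_v$, $\hat{\mathcal{K}}_v$, $\chi_{u-v},\tilde\chi_{u-v},\underline\chi_{u-v},\underline{\tilde\chi}_{u-v}$. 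At this point, to match the displayed formula one still owes the outer $\tr_N$ and the outer $\E$: take the (unconditional) expectation, use Fubini to pull $\E$ through $\int_0^u dv$, and note that $\E[\cdots] = \E\,\tr_N\big[\E_{|\cG,\hcG}[(\tr_k\otimes\mathrm{id}_N)(\Pi_1)]\,\E_{|\cG,\hcG}[(\tr_k\otimes\mathrm{id}_N)(\Pi_2)]\big]$ precisely because, given $\cG,\hcG$, the expression factors as a product of two quantities each measurable with respect to — and after conditioning, deterministic functions of — the conditioned processes; one has to be slightly careful that after conditioning the two $\tr_{Nk}$-factors are genuinely independent, which is why the proof of Proposition~\ref{prop:est covariance} split the single trace into $\tr_{Nk}(\mathcal{E}_{ji}\Theta_1)\tr_{Nk}(D_1\mathcal{E}_{ij}D_2\Theta_2)$, so after the $p_{ll'}$-sum we legitimately land on a product of two conditional expectations.

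The error term $O\!\big(\tfrac{1}{Nk^2}\big)$ is accounted for as follows. Proposition~\ref{prop:est covariance} also tells us (its "in particular" clause, with $D_1=D_2=I_{Nk}$) that $\E\,\Cov_{|\cG,\hcG}\big(\tr_{Nk}(\tilde M),\tr_{Nk}(M)\big)=O\big((Nk)^{-2}\big)$; but what we actually need is a bound on the diagonal piece that was dropped when passing from $\sum_{l,l'}\tr_{Nk}(p_{l'l}\Theta_1 p_{ll'}\Theta_2)$ to the clean $(\tr_k\otimes\mathrm{id}_N)(\Theta_1)(\tr_k\otimes\mathrm{id}_N)(\Theta_2)$ form: tracking normalizations, the honest identity is $k\sum_{l,l'}\tfrac{1}{(Nk)^2}\E_{|\cG,\hcG}\tr_{Nk}(p_{l'l}\Theta_1 p_{ll'}\Theta_2)=\tfrac{1}{N^2}\tr_N[\E_{|\cG,\hcG}(\tr_k\otimes\mathrm{id}_N)(\Theta_1)\,\E_{|\cG,\hcG}(\tr_k\otimes\mathrm{id}_N)(\Theta_2)]$ \emph{plus} a remainder that is itself a conditional covariance of the same shape but now carrying an extra $1/k$ from the normalization mismatch; applying the second statement of Proposition~\ref{prop:est covariance} to that remainder (whose factors are again uniformly $L^p$-bounded by Corollary~\ref{NormeLp}) gives $O\big(\tfrac{1}{N^2}\cdot\tfrac{1}{k^2}\big)\cdot k=O\big(\tfrac{1}{Nk^2}\big)$ after the $du$-integration over the bounded interval $[0,u]\subset[0,t]$. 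The main obstacle, and the only place genuine care is required, is exactly this bookkeeping of $N$ versus $k$ powers through the $p_{ll'}$-summation together with the verification that, conditionally on $\{\cG,\hcG\}$, the two trace-factors really do factor so that Proposition~\ref{prop:est covariance} applies verbatim; everything else (Fubini, H\"older, the $L^p$-bounds from Corollary~\ref{NormeLp}, and the algebra of the $p_{ll'}$) is routine.
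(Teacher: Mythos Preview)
Your proposal correctly identifies the first step: applying Proposition~\ref{prop:est covariance} conditionally on $\{\cG,\hcG\}$ with $D_1=p_{ll'}$, $D_2=p_{l'l}$, and then performing the exact algebraic collapse
\[
\frac{k}{(Nk)^2}\sum_{l,l'=1}^k\tr_{Nk}(p_{ll'}\Theta_1 p_{l'l}\Theta_2)=\frac{1}{N^2}\,\tr_N\big[(\tr_k\otimes\mathrm{id}_N)(\Theta_1)\,(\tr_k\otimes\mathrm{id}_N)(\Theta_2)\big].
\]
This matches the paper exactly. The gap is in the next step, where you pass from $\E_{|\cG,\hcG}\tr_N\big[(\tr_k\otimes\mathrm{id}_N)(\Pi_1)\,(\tr_k\otimes\mathrm{id}_N)(\Pi_2)\big]$ to $\tr_N\big[\E_{|\cG,\hcG}(\tr_k\otimes\mathrm{id}_N)(\Pi_1)\cdot\E_{|\cG,\hcG}(\tr_k\otimes\mathrm{id}_N)(\Pi_2)\big]$. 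Your justification---that ``given $\cG,\hcG$, the expression factors as a product of two quantities each \ldots\ deterministic functions of the conditioned processes''---is false: both $\Pi_1$ and $\Pi_2$ depend on the \emph{same} random processes $\mathcal{K}_v,\hat{\mathcal{K}}_v,\chi_{u-v},\tilde\chi_{u-v},\underline\chi_{u-v},\underline{\tilde\chi}_{u-v}$, which remain random after conditioning on $\{\cG,\hcG\}$. There is no independence and no algebraic ``diagonal piece dropped''; the $p_{ll'}$-sum is exact.

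What the paper actually does at this point is a \emph{second} application of Proposition~\ref{prop:est covariance}. One rewrites
\[
\frac{1}{N^2}\,\tr_N\big[(\tr_k\otimes\mathrm{id}_N)(\Pi_1)\,(\tr_k\otimes\mathrm{id}_N)(\Pi_2)\big]
=\frac{1}{N}\sum_{i,j=1}^N \tr_{Nk}\big(\Pi_1(I_k\otimes E_{ji})\big)\,\tr_{Nk}\big(\Pi_2(I_k\otimes E_{ij})\big),
\]
splits each summand as product of conditional expectations plus conditional covariance, and applies Proposition~\ref{prop:est covariance} with $D_1=I_k\otimes E_{ji}$, $D_2=I_k\otimes E_{ij}$. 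Each covariance is $O\big((Nk)^{-2}\big)$ (the insertions have operator norm one, so the trace in the integrand is bounded via H\"older and Corollary~\ref{NormeLp}); summing the $N^2$ terms against the prefactor $1/N$ yields $\frac{1}{N}\cdot N^2\cdot\frac{1}{(Nk)^2}=O\big(\frac{1}{Nk^2}\big)$. Your proposed bookkeeping $O\big(\tfrac{1}{N^2}\cdot\tfrac{1}{k^2}\big)\cdot k=O\big(\tfrac{1}{Nk^2}\big)$ is arithmetically incorrect (it gives $\tfrac{1}{N^2k}$) and comes from the wrong mechanism. The main term---the product of conditional expectations---is then reassembled exactly into the displayed $\tfrac{1}{N^2}\tr_N[\E_{|\cG,\hcG}(\tr_k\otimes\mathrm{id}_N)(\Pi_1)\cdot\E_{|\cG,\hcG}(\tr_k\otimes\mathrm{id}_N)(\Pi_2)]$ via the same $E_{ij}$ identity run backwards.
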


\begin{proof}
Using Proposition \ref{prop:est covariance} with $D_1=p_{ll'}$ and $D_2=p_{l'l}$, we have that
\[ k \sum_{l, l'=1}^k \Cov_{|{\cG, \hcG}}(\tr_{Nk}(\tilde Mp_{ll'}), \tr_{Nk}( Mp_{l'l}))  \]
is a finite sum of terms of the form 
\begin{multline*}
   \frac{k}{(Nk)^2} \sum_{l, l'=1}^k\int_0^u \E_{|{\cG, \hcG}} \big[ \tr_{Nk}\big(p_{ll'} \Pi_1 p_{l'l} \Pi_2 \big)\big] \ dv \\= \frac{1}{N^2}\int_{0}^u \mathbb{E}_{|{\cG, \hcG}} \, tr_{N}
\big[(\tr_k\otimes id_N )\left( \Pi_1\right)(\tr_k\otimes id_N )\left( \Pi_2\right)\big]dv,
\end{multline*}
where $\Pi_1$ and $\Pi_2$ are finite products of monomials or resolvents of nonnegative polynomials in the components of  $I_k\otimes A^N$, $I_k\otimes  \cG_{t-u}, I_k\otimes\hat{\mathcal{G}}_{t-u}, 
\mathcal{K}_v, \hat{\mathcal {K}}_v$, $\chi_{u-v}$, $\tilde{\chi}_{u-v}$,  $\underline{\chi}_{u-v}$ and $\underline{\tilde{\chi}}_{u-v}$.  
 Now, denoting by the $E_{i,j}$'s the unit $N\times N$ matrices, we write 
\begin{multline*}
   \E_{|{\cG, \hcG}} 
\big[(\tr_k\otimes id_N )\left( \Pi_1\right)(\tr_k\otimes id_N )\left( \Pi_2\right)\big] 
\\= N^2 \sum_{i,j,i',j'=1}^N  \E_{|{\cG, \hcG}} \Big[ \tr_{Nk}\big(\Pi_1(I_k \otimes E_{j,i})\big) \tr_{Nk}\big(\Pi_2(I_k \otimes E_{j',i'})\big)\Big]E_{i,j}E_{i',j'},
\end{multline*}
so that
 \begin{align*}
    &\frac{1}{N^2} \E\big[ tr_{N}
\big[(\tr_k\otimes id_N )\left( \Pi_1\right)(\tr_k\otimes id_N )\left( \Pi_2\right)\big]\big]  \\&= \frac{1}{N}
\sum_{i,j=1}^N
\mathbb{E}\left[tr_{kN} \left( \Pi_1 (I_k\otimes E_{j,i})\right)tr_{kN} \left(  \Pi_2(I_k\otimes E_{i,j})\right)\right],
\\&=
\frac{1}{N}
\sum_{i,j=1}^N \E\Big[ \E_{|\cG,\hcG}\big[\tr_{Nk}\big(\Pi_1 (I_k\otimes E_{j,i}) \big)\big]\E_{|\cG,\hcG}\big[\tr_{Nk}\big(\Pi_2 (I_k\otimes E_{i,j}) \big)\big]\Big]
\\ &\qquad+\sum_{q\in \text{~finite set~} I} \frac{1}{N^3k^2} \sum_{i,j=1}^N \E \int_0^u \E_{|\cG,\hcG }\big[ \tr_{Nk}\big((I_k\otimes E_{j,i})\Theta_1^{(q)}(I_k\otimes E_{i,j}) \Theta_2^{(q)} \big)\big] \ dv ,
\end{align*}
where the last equality follows by Proposition \ref{prop:est covariance} with  $\Theta_1^{(q)}$'s and $\Theta_2^{(q)}$'s being finite products of monomials or resolvents of nonnegative polynomials in the components of $I_k\otimes A^N$, $I_k\otimes  \cG_{t-u}, I_k\otimes\hat{\mathcal{G}}_{t-u}, 
\mathcal{K}_v, \hat{\mathcal {K}}_v$, $\chi_{u-v}$, $\tilde{\chi}_{u-v}$,  $\underline{\chi}_{u-v}$ and $\underline{\tilde{\chi}}_{u-v}$. Then, by the fact that the $A^N_i$'s are uniformly bounded over $N$ in operator norm, Corollary \ref{NormeLp} and the same arguments used to establish the bound in \eqref{bounddness}, we have that
 \begin{align*}
    &\frac{1}{N^2} \E\big[ tr_{N}
\big[(\tr_k\otimes id_N )\left( \Pi_1\right)(\tr_k\otimes id_N )\left( \Pi_2\right)\big]\big]  \\& =
\frac{1}{N}
\sum_{i,j=1}^N \E\Big[ \E_{|\cG,\hcG}\big[\tr_{Nk}\big(\Pi_1 (I_k\otimes E_{j,i}) \big)\big]\E_{|\cG,\hcG}\big[\tr_{Nk}\big(\Pi_2 (I_k\otimes E_{i,j}) \big)\big]\Big]
+ O\Big(\frac{1}{Nk^2} \Big) 
\\& = \sum_{i,i',j,j'=1}^N \E\Big[ \E_{|\cG,\hcG}\big[\tr_{Nk}\big(\Pi_1 (I_k\otimes E_{j,i}) \big)\big]\E_{|\cG,\hcG}\big[\tr_{Nk}\big(\Pi_2 (I_k\otimes E_{j',i'}) \big)\big]\Big]\tr_{N}[E_{i,j}E_{i',j'}]
+ O\Big(\frac{1}{Nk^2} \Big)
\\&= \frac{1}{N^2}\E \tr_{N}\Big[\E_{|\cG,\hcG}\big[(\tr_k\otimes id_N)(\Pi_1)\big] \E_{|\cG,\hcG}\big[(\tr_k\otimes id_N)(\Pi_2)\big]\Big] + O\Big(\frac{1}{Nk^2} \Big).
\end{align*}

\paragraph*{Step 4.}
The last step of the proof consists of proving that the term 
\[
\E \tr_{N}\Big[\E_{|\cG,\hcG}\big[(\tr_k\otimes id_N)(\Pi_1)\big] \E_{|\cG,\hcG}\big[(\tr_k\otimes id_N)(\Pi_2)\big]\Big]
\]
is bounded uniformly in $k$. To do this, we proceed by applying  Cauchy-Schwarz inequality and use the fact that the conditional expectation $\E_{|\cG,\hcG}(\tr_k\otimes id_N)$ is a contraction to obtain 
\[
\Big| \E \tr_{N}\Big[\E_{|\cG,\hcG}\big[(\tr_k\otimes id_N)(\Pi_1)\big] \E_{|\cG,\hcG}\big[(\tr_k\otimes id_N)(\Pi_2)\big]\Big] \Big| \leq \big\| \Pi_1\big\|_{L_2}\big\| \Pi_2\big\|_{L_2} \leq C_t, 
\] 
for some constant $C_t>0$. The final inequality follows from Corollary \ref{NormeLp}, the fact the $A^N_i$'s are uniformly bounded in norm over $N$ and the same arguments used to establish the bound in \eqref{bounddness}. This concludes the proof of Theorem~\ref{theo:fundamental}, obtained by assembling the arguments presented in the preceding steps. 
\end{proof}

\section{Appendix}
\subsection{Proof of Theorem~\ref{th:weakconv}}\label{proofofweakconv}

\paragraph*{Pure trace polynomials.}
The proof follows the approach of~\cite{cebron2013free}, and uses as the main tool the algebra of \emph{pure trace polynomials}, which is the vector space of formal linear combinations of the form $\tr(P_1)\cdots \tr(P_n)$, where $P_1,\ldots,P_n$ are noncommutative polynomials in indeterminate variables and $\tr$ is an indeterminate linear functional. This space has been introduced by Procesi~\cite{procesi1976invariant} in 1976 for the study of unitarily invariant polynomials over matrices (see also~\cite{leron1976trace,razmyslov1974trace,razmyslov1987trace}), and it has been reintroduced regularly since then for the study of large random matrices and free probability~\cite{cebron2013free,cebron2014fluctuations,driver2013large,jekel2022tracial,kemp2016large,kemp2017heat,rains1997combinatorial,sengupta2008traces}.

Let $p\in \mathbb{N}$. Formally, the algebra $\tr\ \mathbb{C}\{X_{\ell},X_{\ell}^*,X_{\ell}^{-1},X_{\ell}^{-1,*}:1\leq \ell \leq p\}$ of \emph{pure trace polynomials} is defined as the symmetric tensor algebra over the vector space $\mathbb{C}\langle X_{\ell},X_{\ell}^*,X_{\ell}^{-1},X_{\ell}^{-1,*}:1\leq \ell \leq p\rangle$ of polynomials of $4p$~noncommutative indeterminates $X_{\ell},X_{\ell}^*,X_{\ell}^{-1},X_{\ell}^{-1,*}$ (with $1\leq \ell \leq p$). A pure tensor which is the tensor product of $n$~polynomials $P_1,\ldots,P_n \in \mathbb{C}\langle X_{\ell},X_{\ell}^*,X_{\ell}^{-1},X_{\ell}^{-1,*}:1\leq \ell \leq p\rangle $ will be written as $\tr(P_1)\cdots \tr(P_n)$. Thus, the standard basis of $\tr\ \mathbb{C}\{X_p,X_p^*,X_p^{-1},X_p^{-1,*}:1\leq \ell \leq p\}$ is the set
$$\{\tr(M_1)\cdots \tr(M_n): n \in \mathbb{N}, M_1, \cdots,M_n\text{ are monomials of }\mathbb{C}\langle X_{\ell},X_{\ell}^*,X_{\ell}^{-1},X_{\ell}^{-1,*}:1\leq \ell \leq p\rangle\}.$$
As $\tr\ \mathbb{C}\{X_{\ell},X_{\ell}^*,X_{\ell}^{-1},X_{\ell}^{-1,*}:1\leq \ell \leq p\}$ is the tensor algebra of the graded vector space of polynomials (graded by degree), it is itself a graded vector space. It allows to define canonically the \textit{degree} of any pure trace polynomial as follows: for all monomials $M_1, \cdots,M_n$, the degree of $\tr(M_1)\cdots \tr(M_n)$ is the product of the degrees of the monomials, and  the degree of any pure trace polynomial is the highest of the degrees of each non-zero component in the standard basis.
For any $d\in \mathbb{N}$, we will need to consider the vector space of pure trace polynomials with degrees less or equal to $d$. It is a finite-dimensional vector space, and we denote it by $E_d$.

Let us define the $\tr\ \mathbb{C}\{X_{\ell},X_{\ell}^*,X_{\ell}^{-1},X_{\ell}^{-1,*}:1\leq \ell \leq p\}$-calculus on any {*-probability} space~$(\mathcal{A},\varphi)$. For any $P=\tr(M_1)\cdots \tr(M_n)\in \tr\ \mathbb{C}\{X_{\ell},X_{\ell}^*,X_{\ell}^{-1},X_{\ell}^{-1,*}:1\leq \ell \leq p\}$ and any $p$-tuple $a=(a_1,\ldots,a_p)$ of invertible elements in $\mathcal{A}$, we denote by $P (a)$ the quantity
$$P (a) = \varphi\big(M_1(a)\big) \cdots \varphi\big(M_n (a)\big)\in \mathbb{C},$$
where $M_1(a), \ldots, M_p(a)$ are given by the polynomial calculus, and we extend this notation to all pure trace polynomials by linearity. In particular, given a multiplicative $(\lambda,\tau)$-Brownian motion $\G_t:= \big(G_t^{(1)} , \dots, G_t^{(p)} \big)$, a time $t\geq 0$, and a pure trace polynomial $P=\tr(P_1)\cdots \tr(P_n)$ we get
$$P (\G_t) = \tr_N\big(P_1(\G_t)\big) \cdots \tr_N\big(P_n (\G_t)\big).$$

\paragraph*{Computation of the generator of the Brownian motion.}
Using the stochastic calculus on $GL(N,\mathbb{C})$, and in particular Lemma~\ref{Lemma:Calcul QuadraticCov}, we can compute the evolution of the expectation of $\mathbb{E}[P (G_t)]$. 

First, let us define two operators $\Delta$ and $\tilde{\Delta}$ which act on $\tr\ \mathbb{C}\{X_{\ell},X_{\ell}^*,X_{\ell}^{-1},X_{\ell}^{-1,*}:1\leq \ell \leq p\}$ and which will appear in the computation of the generator of $G_t$.

Let $P= \tr(M_1)\cdots \tr(M_n)$ be a pure trace polynomial, where for each $i=1,\ldots,n$, the monomial $M_i$ is given by
$$M_i=X_{\ell_{1,i}}^{\epsilon_{1,i}}\cdots X_{\ell_{m_i,i}}^{\epsilon_{m_i,i}},$$
with $\ell_{u,v}\in\{1,\ldots,p\}$ and $\epsilon_{u,v}\in\{\cdot, *,-1, (-1,*)\}.$ Define the operator
\begin{align}\Delta(P)=& \sum_{\ell=1}^p \frac{\sigma_l^2}{2} \{d_\ell (\tau-\lambda)+d_\ell^* (\bar \tau -\lambda) +d_{\ell}^{(-1)} (\tau-\lambda) + d_{\ell}^{(-1,*)}(\bar\tau-\lambda)\}P\label{premiertermeDelta}\\&+\sum_{i=1}^n \prod_{i' \neq i} \tr(M_{i'})\sum_{k<k'} \sigma^2_{l_k,i} \delta_{\ell_{k,i},\ell_{k',i}}\nonumber\\ &\quad\tr\left(
X_{\ell_{1,i}}^{\epsilon_{1,i}}\cdots X_{\ell_{k-1,i}}^{\epsilon_{k-1,i}}
Q_1^{(\epsilon_{k,i}, \epsilon_{k',i})}(X_{\ell_{k,i}})
X_{\ell_{k'+1,i}}^{\epsilon_{k'+1,i}}\cdots X_{\ell_{m_i,i}}^{\epsilon_{m_i,i}}\right) \tr\left(
Q_2^{(\epsilon_{k,i}, \epsilon_{k',i})}(X_{\ell_{k,i}})
X_{\ell_{k+1,i}}^{\epsilon_{k+1,i}}\cdots X_{\ell_{k'-1,i}}^{\epsilon_{k'-1,i}}\right),\label{derniertermeDelta}
\end{align}
where, for $\ell\in\{1,\ldots,p\}$ and $\epsilon\in\{\cdot, *,-1, (-1,*)\},$
$d_\ell^{\epsilon}$ is the degree of $X_l^\epsilon$ in $ P$ and $Q_1^{(\epsilon_1,\epsilon_2)}$ and $Q_2^{(\epsilon_1,\epsilon_2)}$ are given by
{\small \begin{center}
\begin{tabular}{l|l|l}
$Q_1^{(\cdot,\cdot)}(X) = (\tau-\lambda) X$ & $Q_1^{(\cdot,-1)}(X)  = - (\tau-\lambda)  I $ &$Q_1^{(\cdot,\ast)}(X) = \lambda XX^\ast$
\\ 
  $Q_1^{(\cdot,({-1,\ast}))}(X)  = - \lambda X$ &
$Q_1^{(-1,\cdot)}(X)  = - (\tau-\lambda)  I$ & $Q_1^{(-1,-1)}(X) =  (\tau-\lambda) X^{-1} $
\\ 
$Q_1^{(-1,\ast)}(X)  = - \lambda  X^\ast$ & $Q_1^{(-1,({-1,\ast}))}=  \lambda  I$&
$Q_1^{(\ast,\cdot)}(X)  = \sigma^2 \lambda  I$\\  $Q_1^{(\ast,-1)}(X)  = - \lambda X^{-1} $
&
$Q_1^{(\ast,\ast)}(X) =  (\bar{\tau}-\lambda)  X^\ast$ & $Q_1^{(\ast,({-1,\ast}))}(X) = - (\bar{\tau}-\lambda) I$\\ 
{\small $Q_1^{(({-1,\ast}),\cdot)}(X)  =  - \lambda X^{-1,\ast} $} & {\small
$Q_1^{(({-1,\ast}), -1)}(X)  =  \lambda X^{-1,\ast} X^{-1} $}
& 
{\small $Q_1^{(({-1,\ast}),\ast)}(X)  = - (\bar{\tau}-\lambda)  I$ } \\ & {\small $Q_1^{(({-1,\ast}),({-1,\ast}))}(X)  =  (\bar{\tau}-\lambda)  X^{-1,\ast}$}
\end{tabular}

\end{center} }
and 
{\small \begin{center}
\begin{tabular}{l|l|l|l}
$Q_2^{(\cdot,\cdot)}(X) =  X$ & $Q_2^{(\cdot,-1)}(X)  =  I $
&
$Q_2^{(\cdot,\ast)}(X) = I$ & $Q_2^{(\cdot,({-1,\ast}))}(X)  =  X^{-1,\ast}$\\
$Q_2^{(-1,\cdot)}(X)  =  I$ & $Q_2^{(-1,-1)}(X) =  X^{-1} $
& 
$Q_2^{(-1,\ast)}(X)  =  X^{-1}$ & $Q_2^{(-1,({-1,\ast}))}= X^{-1,\ast} X^{-1}$\\
$Q_2^{(\ast,\cdot)}(X)  = XX^\ast$ & $Q_2^{(\ast,-1)}(X)  =  X^{\ast} $
&
$Q_2^{(\ast,\ast)}(X) =   X^\ast$ & $Q_2^{(\ast,({-1,\ast}))}(X) =X^{-1,\ast}X^\ast$\\
{\small $Q_2^{(({-1,\ast}),\cdot)}(X)  =  X $} & {\small
$Q_2^{(({-1,\ast}), -1)}(X)  = I $}
& 
{\small $Q_2^{(({-1,\ast}),\ast)}(X)  =   I$ } & {\small $Q_2^{(({-1,\ast}),({-1,\ast}))}(X)  =  X^{-1,\ast}.$}
\end{tabular}

\end{center} }
We also define the operator $\tilde \Delta$ given by
\begin{align*}
    \tilde \Delta P&= 
\sum_{\ell=1}^p\sigma_{\ell}^2 \sum_{1\leq i<i'\leq n} \prod_{i^{\prime\prime}\neq i,i'}\tr(M_{i^{\prime\prime}})\\&\Bigg\{
(\tau-\lambda)\sum_{\substack{ M_i =AX_\ell B\\M_{i'} =CX_{\ell} D}}
\tr(AX_\ell DCX_\ell B)
+\lambda\sum_{\substack{M_i =AX_\ell B\\M_{i'} =CX_{\ell}^\ast D}}
\tr(AX_\ell X_\ell^\ast DCB )\\
&+(\lambda-\tau)\sum_{\substack{ M_i =AX_\ell B\\M_{i'} =CX_{\ell}^\ast D}}
\tr(A DC B)-\lambda
\sum_{\substack{M_i =AX_\ell B\\M_{i'} =CX_{\ell}^{-1,\ast} D}}
\tr(AX_\ell DCX_\ell^{-1,\ast} B)\\&
+\lambda\sum_{\substack{ M_i =AX_\ell^\ast B\\M_{i'} =CX_{\ell} D}}
\tr(ADCX_\ell X_\ell^\ast B)+(\tau-\lambda)\sum_{\substack{ M_i =AX_\ell^\ast B\\M_{i'} =CX_{\ell}^\ast D}}
\tr(AX_\ell^\ast DCX_\ell^\ast B)\\&
-\lambda\sum_{\substack{ M_i =AX_\ell^\ast B\\M_{i'} =CX_{\ell}^{-1} D}}
\tr(AX_\ell^{-1} DCX_\ell^\ast B)+(\lambda-\bar \tau)\sum_{\substack{ M_i =AX_\ell^\ast B\\M_{i'} =CX_{\ell}^{-1,\ast} D}}
\tr(ADCX_\ell^{-1,\ast} X_\ell^\ast B)\\
&+ (\lambda-\tau)\sum_{\substack{M_i =AX_\ell^{-1} B\\M_{i'} =CX_{\ell} D}}
\tr(A DC B)-\lambda \sum_{\substack{M_i =AX_\ell^{-1} B\\M_{i'} =CX_{\ell}^\ast D}}
\tr(AX_\ell^\ast DCX_\ell^{-1} B)\\
&+(\tau-\lambda)\sum_{\substack{M_i =AX_\ell^{-1} B\\M_{i'} =CX_{\ell}^{-1} D}}
\tr(AX_\ell^{-1} DCX_\ell^{-1} B)+\lambda \sum_{\substack{M_i =AX_\ell^{-1} B\\M_{i'} =CX_{\ell}^{-1,\ast} D}}
\tr(ADCX_\ell^{-1,\ast} X_\ell^{-1} B)\\
& -\lambda \sum_{\substack{M_i =AX_\ell^{-1,\ast} B\\M_{i'} =CX_{\ell} D}}
\tr(AX_\ell^{-1,\ast} DCX_\ell B)+(\lambda-\bar \tau)\sum_{\substack{M_i =AX_\ell^{-1,\ast} B\\M_{i'} =CX_{\ell}^\ast D}}
\tr(A DC B)\\
&+\lambda \sum_{\substack{M_i =AX_\ell^{-1,\ast} B\\M_{i'} =CX_{\ell}^{-1}D}}
\tr(AX_\ell^{-1,\ast} X_\ell^{-1}DC B)+(\bar \tau-\lambda)\sum_{\substack{M_i =AX_\ell^{-1,\ast} B\\M_{i'} =CX_{\ell}^{-1,\ast} D}}
\tr(AX_\ell^{-1,\ast} DCX_\ell^{-1,\ast} B)\Bigg\}.
\end{align*}
With the above definitions of the operators $\Delta$ and $\tilde\Delta$, we prove the following result.
\begin{lemma}\label{Generator}
We have
$$\frac{d}{ds}\mathbb{E}\left[P(\G_s)\right]=\mathbb{E}\left[\left((\Delta+\frac{1}{N^2}\tilde{\Delta})P\right)(\G_s)\right].$$
\end{lemma}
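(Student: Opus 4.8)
\textbf{Proof proposal for Lemma \ref{Generator}.}

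The plan is to apply It\^o's formula to the process $s\mapsto P(\G_s)$ and take expectations, using the stochastic calculus set up in the paragraph ``Stochastic calculus on $GL_N(\mathbb{C})$'' and, in particular, the quadratic covariation table of Lemma \ref{Lemma:Calcul QuadraticCov}. Since $P=\tr(M_1)\cdots\tr(M_n)$ is a product of normalized traces of monomials $M_i$ in the entries of the $G^{(\ell)}_s$, $G^{(\ell)*}_s$, $G^{(\ell),-1}_s$, $G^{(\ell),-1,*}_s$, I would first expand $d[\tr_N(M_i(\G_s))]$ via the It\^o product rule \eqref{Itoproduit}: this produces (a) a sum of ``first-order'' terms, one for each letter $X^{\epsilon_{k,i}}_{\ell_{k,i}}$ of $M_i$, in which the drift part of the corresponding SDE from the paragraph on stochastic calculus is inserted in place of that letter, and (b) a sum of ``second-order'' terms, one for each unordered pair $k<k'$ of letters of $M_i$, in which a quadratic covariation bracket $d\langle (G^{(\ell)})^{\epsilon_k},(G^{(\ell')})^{\epsilon_{k'}}\rangle_s$ is inserted between positions $k$ and $k'$ (the bracket vanishes unless $\ell_{k,i}=\ell_{k',i}$ by independence of the different Brownian motions). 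Then, applying the product It\^o rule once more across the $n$ factors $\tr_N(M_i)$, I get a third family of terms where brackets link a letter in $M_i$ to a letter in a different $M_{i'}$, $i\neq i'$.

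The next step is to match these three families against the right-hand side. The first-order drift terms: each letter $X^\epsilon_\ell$ contributes a drift $-\tfrac{\sigma_\ell^2}{2}(\lambda-\tau)$ or its conjugate (depending on $\epsilon$), leaving the monomial otherwise unchanged; summing over all occurrences of all letters gives exactly the first line \eqref{premiertermeDelta} of $\Delta$, with $d_\ell^\epsilon$ counting the occurrences. The intra-monomial second-order brackets: here I invoke Lemma \ref{Lemma:Calcul QuadraticCov} to read off, for each pair $(\epsilon_k,\epsilon_{k'})$, which monomials $Q_1^{(\epsilon_k,\epsilon_{k'})}$ and $Q_2^{(\epsilon_k,\epsilon_{k'})}$ appear and how the trace splits; the key structural feature visible in every line of Lemma \ref{Lemma:Calcul QuadraticCov} is that $d\langle G^{\epsilon},G^{\hat\epsilon}\rangle_s\sharp V = Q_1(G_s^\epsilon,G_s^{\hat\epsilon})\,\tr_N(Q_2(G_s^\epsilon,G_s^{\hat\epsilon})V)\,ds$, i.e.\ a ``$\tr_N$'' is produced, which is precisely why a single trace $\tr_N(M_i)$ splits into a product of two traces --- this reproduces \eqref{derniertermeDelta}, and the prefactor becomes $\tfrac{1}{N^2}\cdot N = \tfrac1N$... wait, I should be careful: $\tr_N(E_{ji}\cdots)$ bookkeeping gives an overall $\tfrac1N$ relative to the single-trace normalization, but since $\Delta$ acts on pure trace polynomials (where every monomial already carries a normalized $\tr$), the bracket contribution is $O(1)$ in $N$, hence it lands in $\Delta$, not in $\tfrac{1}{N^2}\tilde\Delta$. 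Finally, the inter-monomial brackets linking $M_i$ to $M_{i'}$: each such bracket fuses the two traces $\tr_N(M_i)\tr_N(M_{i'})$ into a single trace (this is the reverse phenomenon), and the combinatorial bookkeeping of the $E_{ij}$ insertion produces an extra factor $\tfrac{1}{N^2}$; reading off the sixteen $(\epsilon,\hat\epsilon)$ cases from Lemma \ref{Lemma:Calcul QuadraticCov} gives precisely the sixteen sums inside $\tilde\Delta$, each with its stated coefficient ($\tau-\lambda$, $\lambda$, $\lambda-\bar\tau$, etc., matching the entries of the table).

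The main obstacle is purely bookkeeping rather than conceptual: one must verify that the specific polynomials $Q_1^{(\epsilon_1,\epsilon_2)}$, $Q_2^{(\epsilon_1,\epsilon_2)}$ tabulated before Lemma \ref{Generator} (and the sixteen monomials appearing inside $\tilde\Delta$) are exactly what Lemma \ref{Lemma:Calcul QuadraticCov} produces for each of the sixteen sign-pairs $(\epsilon_1,\epsilon_2)\in\{\cdot,*,-1,(-1,*)\}^2$, including the placement of the inserted factors relative to the cut points $k,k'$ and the correct treatment of $G^{-1,*}$ versus $(G^*)^{-1}$. A secondary technical point is justifying that the martingale (stochastic-integral) parts have zero expectation, which is standard given the $L^p$ bounds on the processes (Corollary \ref{NormeLp}) ensuring the local martingales are true martingales; and one should note that the factor-of-$N$ accounting ($\tfrac1N$ from the single trace bracket staying in $\Delta$, versus $\tfrac{1}{N^2}$ from the double-trace fusion going into $\tilde\Delta$) is what pins down the two operators correctly. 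Once the table-matching is carried out case by case, collecting terms and taking expectations yields the claimed identity.
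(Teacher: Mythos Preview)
Your proposal is correct and follows essentially the same approach as the paper: apply the It\^o product rule \eqref{Itoproduit} together with the quadratic covariation table of Lemma \ref{Lemma:Calcul QuadraticCov}, identify the three families of terms (single-letter drifts, intra-monomial brackets, inter-monomial brackets), and match them respectively to \eqref{premiertermeDelta}, \eqref{derniertermeDelta}, and $\tfrac{1}{N^2}\tilde\Delta$. The paper organizes the computation slightly differently, first rewriting the product $\tr_N(M_1)\cdots\tr_N(M_n)$ as a single trace $\tfrac{1}{N^n}\sum_{j_1,\ldots,j_n}\Tr(E_{j_nj_1}M_1\cdots E_{j_{n-1}j_n}M_n)$ before applying It\^o, which makes the $N$-power bookkeeping (why intra-monomial brackets stay in $\Delta$ while inter-monomial brackets pick up $1/N^2$) more transparent than your ``wait, I should be careful'' paragraph---but the underlying argument is the same.
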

\begin{proof}
We have 
\begin{align*}
d\mathbb{E}\left[P(\G_s)\right]&= \frac{1}{N^n} \sum_{j_1,\ldots j_n=1}^N d
\mathbb{E}\left[\Tr \left( E_{j_n j_1} M_1(\G_s)E_{j_1 j_2} M_2(\G_s)\cdots E_{j_{n-1} j_n} M_n(\G_s)\right)\right]\\
&= \underbrace{\frac{1}{N^n} \sum_{j_1,\ldots j_n=1}^N
\sum_{i=1}^n\mathbb{E}\left[\Tr \left( E_{j_nj_1} M_1(\G_s)\cdots E_{j_{i-1} j_i} dM_i(\G_s) E_{j_{i} j_{i+1}} M_{i+1}(\G_s)\cdots \right)\right]}_{(I)}\\
 & \hspace{0.5cm} +
 \frac{1}{N^n}
\sum_{1\leq i<i'\leq n}
\mathbb{E}\left[\Tr \left( E_{j_n j_1} \cdots E_{j_{i-1} j_i} \right.\right.\\&
 \hspace{1cm}\cdot\left(d\langle M_i(\G)\otimes M_{i'}(G)\rangle_s \sharp \left[ E_{j_{i} j_{i+1}} M_{i+1}(\G_s)\cdots
E_{j_{i'-1} j_{i'}}\right]\right)  \cdot \left.\left.E_{j_{i'} j_{i'+1}} M_{i'+1}(\G_s)\cdots\right)\right]\\&\hspace{0.5cm}\underbrace{\hspace{14cm}}_{(II)}
\end{align*}
  where we recall that the $E_{ij}$'s are the unit $N\times N $ matrices and we used \eqref{Itoproduit} in the last equality. Now, for each $i\in\{1,\ldots, n\}$, we have by using \eqref{Itoproduit} \begin{eqnarray} 
 dM_i(\G_s)&=&\sum_{k=1}^{m_i} G_{\ell_{1,i}}^{\epsilon_{1,i}}(s)\cdots 
  G_{\ell_{k-1,i}}^{\epsilon_{k-1,i}}(s)
  (dG_{\ell_{k,i}}^{\epsilon_{k,i}}(s))
  G_{\ell_{k+1,i}}^{\epsilon_{k+1,i}}(s)\cdots G_{\ell_{m_i,i}}^{\epsilon_{m_i,i}}(s)\label{memenombredetraces}\\&&
  +  \sum_{\substack{1\leq k<k'\leq m_i\\\ell_k(i) =\ell_{k'}(i)}}
  G_{\ell_{1,i}}^{\epsilon_{1,i}}(s)\cdots 
  G_{\ell_{k-1,i}}^{\epsilon_{k-1,i}}(s)
  \nonumber\\&&\left\{d\langle G_{\ell_{k,i}}^{\epsilon_{k,i}}\otimes G_{\ell_{k',i}}^{\epsilon_{k',i}}\rangle_s \sharp 
  \left[ G_{\ell_{k+1,i}}^{\epsilon_{k+1,i}}(s)\cdots 
  G_{\ell_{k'-1,i}}^{\epsilon_{k'-1,i}}(s)\right]\right\} G_{\ell_{k'+1,i}}^{\epsilon_{k'+1,i}}(s)\cdots 
  G_{\ell_{m_i,i}}^{\epsilon_{m_i,i}}(s).
  \label{creationdunetrace}
  \end{eqnarray}
  The drift term of \eqref{memenombredetraces}, introduced in (I), provides obviously 
  the first term \eqref{premiertermeDelta} in the formula for $\Delta$. 
  It is easy to deduce from Lemma \ref{Lemma:Calcul QuadraticCov} that 
  \eqref{creationdunetrace}, introduced in (I), increases the number of traces by 1 and provides   the second  term \eqref{derniertermeDelta} in the formula for $\Delta$.  Using \eqref{compute brackets}  and Lemma \ref{Lemma:Calcul QuadraticCov}, one can easily check that (II) gives rise to $\frac{1}{N^2} \tilde \Delta$. The factor $\frac{1}{N^2}$ in front of $\tilde \Delta$ comes from the fact that the initial  number of traces in $P$ is reduced by 1  whereas the brackets of the $G_{\ell}^\epsilon$'s provide  an additional $1/N$ .
\end{proof}

\paragraph*{Large-$N$ limit.}
Let $d\in \mathbb{N}$ be a fixed degree.
Note that $\Delta$ and $\tilde{\Delta}$ are graded linear maps that leave invariant the finite-dimensional space $E_d$. As a consequence, for $s\geq 0$ and $N\in \mathbb{N}$, the exponentiations $e^{s\Delta}$ of $s\Delta$ and $e^{s(\Delta+\frac{1}{N^2}\tilde{\Delta})}$ of $s(\Delta+\frac{1}{N^2}\tilde{\Delta})$ are well-defined as operators acting on  $E_d$.

Using \cite[Lemma 2.5]{cebron2013free}, we get the following result as a direct corollary of Lemma~\ref{Generator}.

\begin{lemma}
Let $P\in E_d$, $s\geq 0$ and $N\in \mathbb{N}$. We have
$$\mathbb{E}\left[P(\G_s)\right]=\left(e^{s(\Delta+\frac{1}{N^2}\tilde{\Delta})}P\right)(I_N).$$
\end{lemma}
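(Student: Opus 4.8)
The plan is to recognize this as an immediate consequence of Lemma~\ref{Generator} once we pass to a fixed finite-dimensional subspace of pure trace polynomials. Fix $d\in\mathbb{N}$ with $P\in E_d$, and write $L_N:=\Delta+\frac{1}{N^2}\tilde{\Delta}$. Since $\Delta$ and $\tilde{\Delta}$ are graded linear maps leaving $E_d$ invariant, $L_N$ restricts to an endomorphism of the finite-dimensional space $E_d$, so $e^{sL_N}$ is a well-defined operator on $E_d$ depending smoothly on $s$, and the right-hand side $\big(e^{s(\Delta+\frac{1}{N^2}\tilde{\Delta})}P\big)(I_N)$ of the asserted identity makes sense.

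The idea is to view the family of evaluations as a curve in the dual space $E_d^\ast$. For $s\geq 0$, define the linear functional $\psi_s\in E_d^\ast$ by $\psi_s(Q):=\mathbb{E}[Q(\G_s)]$ for $Q\in E_d$. First I would check that $s\mapsto\psi_s$ is differentiable and that one may differentiate under the expectation; this is justified by the uniform $L^p$-bounds on the entries of $\G_s$ and $\G_s^{-1}$ provided by Corollary~\ref{NormeLp}. Applying Lemma~\ref{Generator} not only to $P$ but to every $Q\in E_d$ — which is legitimate because $L_N Q\in E_d$ as well — yields the linear differential equation $\frac{d}{ds}\psi_s(Q)=\psi_s(L_N Q)$, that is $\psi_s'=\psi_s\circ L_N$ in $E_d^\ast$, with initial condition $\psi_0(Q)=Q(I_N)$ coming from $\G_0=I_N$.

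Solving this linear ordinary differential equation on the finite-dimensional space $E_d^\ast$ — precisely the content of \cite[Lemma 2.5]{cebron2013free} — gives $\psi_s=\psi_0\circ e^{sL_N}$. Evaluating at $Q=P$ then gives $\mathbb{E}[P(\G_s)]=\psi_s(P)=\psi_0\big(e^{sL_N}P\big)=\big(e^{s(\Delta+\frac{1}{N^2}\tilde{\Delta})}P\big)(I_N)$, which is the claim. The only nonroutine point is the differentiation under the expectation and the verification that the derivative formula of Lemma~\ref{Generator} extends uniformly over all of $E_d$; both are handled by the moment bounds recalled above, so there is no genuine obstacle here beyond bookkeeping — the substance of the argument lies entirely in Lemma~\ref{Generator} and in the finite-dimensionality of $E_d$.
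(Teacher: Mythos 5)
Your argument is correct and follows exactly the route the paper takes: the paper proves this lemma in one line, by citing \cite[Lemma 2.5]{cebron2013free} as a direct corollary of Lemma~\ref{Generator}, and your proposal simply unpacks what that citation does — viewing $\psi_s(Q):=\mathbb{E}[Q(\G_s)]$ as a curve in $E_d^\ast$, reading Lemma~\ref{Generator} as the linear ODE $\psi_s'=\psi_s\circ L_N$ with $\psi_0(Q)=Q(I_N)$, and solving it on the finite-dimensional space by exponentiation. The identification of the solution $\psi_s=\psi_0\circ e^{sL_N}$ is right, and the reduction to Lemma~\ref{Generator} plus finite dimensionality is precisely the intended content.

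One caveat: your appeal to Corollary~\ref{NormeLp} to justify differentiation under the expectation is both unnecessary and, in the paper's logical order, circular. Corollary~\ref{NormeLp} is proved later, via Proposition~\ref{prop:convergence}, which itself rests on the very lemma you are proving here. But you do not need it — the differentiability of $s\mapsto\mathbb{E}[Q(\G_s)]$ together with the formula for the derivative is exactly the assertion of Lemma~\ref{Generator}, which you may take as given for every $Q\in E_d$ (it is stated for an arbitrary pure trace polynomial). Once that point is cited rather than re-derived, no further moment estimate is required and the argument closes cleanly.
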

It allows one to get the large-$N$ limit with a control in $1/N^2$ as wanted.

\begin{proposition}\label{prop:convergence}
    Let $P\in E_d$ be pure trace polynomial, and let $t\geq 0$. There exists a constant $C>0$ such that, for all $0\leq s \leq t$ and all $N\in \mathbb{N}$, we have
$$\left|\mathbb{E}\left[P(\G_s)\right]-\left(e^{s\Delta}P\right)(1)\right|\leq \frac{C}{N^2}.$$
Moreover, there exists a 
constant $C'>0$ such that, for all $0\leq s \leq t$, we have
$$\left|\mathbb{E}\left[P(\G_s)\right]\right|\leq C'.$$
\end{proposition}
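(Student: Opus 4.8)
The plan is to derive both estimates from the explicit representation $\mathbb{E}[P(\G_s)] = (e^{s(\Delta+\frac{1}{N^2}\tilde{\Delta})}P)(1)$ provided by the preceding lemma, together with the fact that $\Delta$ and $\tilde{\Delta}$ are linear endomorphisms of the \emph{finite-dimensional} space $E_d$. Fix the degree $d$ so that $P\in E_d$, equip $E_d$ with any norm $\|\cdot\|$, and let $\|\Delta\|$, $\|\tilde{\Delta}\|$ denote the corresponding operator norms on $E_d$. First I would observe that, since $\|\frac{1}{N^2}\tilde{\Delta}\|\le \|\tilde{\Delta}\|$ for all $N\ge 1$, the operators $\Delta+\frac{1}{N^2}\tilde{\Delta}$ are uniformly bounded on $E_d$, hence by the standard bound $\|e^{sA}\|\le e^{s\|A\|}$ we get
\begin{equation*}
\bigl\|e^{s(\Delta+\frac{1}{N^2}\tilde{\Delta})}\bigr\| \le e^{s(\|\Delta\|+\|\tilde{\Delta}\|)} \le e^{t(\|\Delta\|+\|\tilde{\Delta}\|)}
\end{equation*}
for all $0\le s\le t$ and all $N$. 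Applying this operator to $P$ and evaluating the pure-trace polynomial at the identity (evaluation at $1$ is a fixed linear functional on $E_d$, hence bounded), one obtains $|\mathbb{E}[P(\G_s)]|\le C'$ with $C' = \|P\|\,e^{t(\|\Delta\|+\|\tilde{\Delta}\|)}\cdot\|\mathrm{ev}_1\|$, which proves the second assertion.

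For the first assertion, the idea is to compare the two semigroups $e^{s(\Delta+\frac{1}{N^2}\tilde{\Delta})}$ and $e^{s\Delta}$ on $E_d$ using the Duhamel (variation-of-constants) formula:
\begin{equation*}
e^{s(\Delta+\frac{1}{N^2}\tilde{\Delta})} - e^{s\Delta} = \frac{1}{N^2}\int_0^s e^{(s-u)\Delta}\,\tilde{\Delta}\,e^{u(\Delta+\frac{1}{N^2}\tilde{\Delta})}\,du .
\end{equation*}
Taking operator norms on $E_d$ and using the uniform bounds $\|e^{(s-u)\Delta}\|\le e^{t\|\Delta\|}$ and $\|e^{u(\Delta+\frac{1}{N^2}\tilde{\Delta})}\|\le e^{t(\|\Delta\|+\|\tilde{\Delta}\|)}$ from the previous step, we get
\begin{equation*}
\bigl\|e^{s(\Delta+\frac{1}{N^2}\tilde{\Delta})} - e^{s\Delta}\bigr\| \le \frac{t}{N^2}\,\|\tilde{\Delta}\|\,e^{2t\|\Delta\|+t\|\tilde{\Delta}\|} .
\end{equation*}
Applying this to $P$, evaluating at $1$, and absorbing all $s$-independent, $N$-independent constants into a single $C>0$ yields $|\mathbb{E}[P(\G_s)] - (e^{s\Delta}P)(1)|\le C/N^2$, as required. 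Here one should note that $(e^{s\Delta}P)(1)$ is exactly the limiting quantity $\varphi(P(\g_s))$ for the free multiplicative $(\lambda,\tau)$-Brownian motion, since $\Delta$ is the generator obtained by dropping the $1/N^2$ correction — this identification is what makes the statement the quantitative weak-convergence estimate feeding into Theorem~\ref{th:weakconv}.

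I do not expect any serious obstacle here: the entire argument is a soft functional-analytic manipulation once Lemma~\ref{Generator} and its corollary (the semigroup representation) are in hand, and the finite-dimensionality of $E_d$ is what makes all the operator norms finite and the exponentials well-defined. The only point requiring a little care is that the constants $C, C'$ depend on $d$ (hence on $P$ through its degree) and on $t$, but not on $s\in[0,t]$ nor on $N$ — which is precisely what the statement claims. One should also make sure the chosen norm on $E_d$ is fixed once and for all before estimating, so that $\|\Delta\|$ and $\|\tilde{\Delta}\|$ are genuine constants; any two norms on the finite-dimensional space $E_d$ being equivalent, this choice is immaterial for the conclusion.
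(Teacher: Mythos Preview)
Your proposal is correct and follows essentially the same approach as the paper: both proofs exploit the finite-dimensionality of $E_d$ to get operator-norm bounds on $\Delta$ and $\tilde\Delta$, apply Duhamel's formula to estimate $e^{s(\Delta+\frac{1}{N^2}\tilde\Delta)}-e^{s\Delta}$ in operator norm, and then use that evaluation at the identity is a bounded linear functional on $E_d$. The only cosmetic differences are that you prove the uniform bound first (directly from $\|e^{s(\Delta+\frac{1}{N^2}\tilde\Delta)}\|$) whereas the paper derives it afterward via the triangle inequality, and your Duhamel integrand has the factors in the opposite order, leading to a harmless discrepancy in the explicit constant.
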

\begin{proof}We endow the finite-dimensional vector space $E_d$ with any Euclidean norm $\|\cdot\|_{E_d}$.
The Duhamel's formula (see for example \cite[Lemma 3.8.]{cebron2014fluctuations}) gives us that
$$e^{s(\Delta+\frac{1}{N^2}\tilde{\Delta})}-e^{s\Delta}= \frac{1}{N^2}\int_0^se^{u(\Delta+\frac{1}{N^2}\tilde{\Delta})}\tilde{\Delta}e^{(s-u)\Delta}du,$$
from which we can estimate the operator norm
$$\left\|e^{s(\Delta+\frac{1}{N^2}\tilde{\Delta})}-e^{s\Delta} \right\|\leq \frac{1}{N^2}e^{s\|\Delta\|+\frac{s}{N^2}\|\tilde{\Delta}\|}s\|\tilde{\Delta}\|\leq \frac{1}{N^2}e^{t\|\Delta\|+t\|\tilde{\Delta}\|}t\|\tilde{\Delta}\|.$$
As a consequence, setting $C'':=e^{t\|\Delta\|+t\|\tilde{\Delta}\|}t\|\tilde{\Delta}\|$, we have
$$\left\|e^{s(\Delta+\frac{1}{N^2}\tilde{\Delta})}P-e^{s\Delta}P \right\|_{E_d}\leq\frac{1}{N^2}C''\|P\|_{E_d}.$$
Because the traces are normalized, the maps $P\mapsto P(I_N)$ and $P\mapsto P(1)$ are the same linear form. We denote it by $\phi:E_d\to \mathbb{C}$. As it is a linear form on a finite-dimensional space, it is bounded, and can compute
\begin{align*}
    \left|\mathbb{E}\left[P(\G_s)\right]-\left(e^{s\Delta}P\right)(1)\right|&=\left|e^{s(\Delta+\frac{1}{N^2}\tilde{\Delta})}P(I_N)-e^{s\Delta}P(1)\right|\\
    &\leq \|\phi\|\cdot \left\|e^{s(\Delta+\frac{1}{N^2}\tilde{\Delta})}P-e^{s\Delta}P \right\|_{E_d}\\
    &\leq \frac{1}{N^2}\|\phi\|\cdot C''\cdot \|P\|_{E_d}
\end{align*}
which allows to conclude by setting $C:=\|\phi\|\cdot C''\cdot \|P\|_{E_d}$.    Similarly, we have
    $$|e^{s\Delta}P(1)|\leq \|\phi\|\cdot e^{s\|\Delta\|}\cdot \|P\|_{E_d}\leq \|\phi\|\cdot e^{t\|\Delta\|}\cdot \|P\|_{E_d},$$
    which allows to bound
    $$ \left|\mathbb{E}\left[P(\G_s)\right]\right|\leq  \left|\mathbb{E}\left[P(\G_s)\right]-\left(e^{s\Delta}P\right)(1)\right|+|e^{s\Delta}P(1)|\leq \frac{C}{N^2}+\|\phi\|\cdot e^{t\|\Delta\|}\cdot \|P\|_{E_d}:=C'$$
    uniformly for $s\leq t$ and $N\in \mathbb{N}$.
\end{proof}
\begin{cor}
    \label{NormeLp}
    Let $t\geq 0$, $q\in \mathbb{N}$ and let $P\in \mathbb{C}\langle X_{\ell},X_{\ell}^*,X_{\ell}^{-1},X_{\ell}^{-1,*}:1\leq \ell \leq p\rangle$ be any polynomial in $4p$ variables. There exists a constant $C>0$ such that, for all $0\leq s \leq t$ and all $N\in \mathbb{N}$, we have
    $$\left\|P(\G_s)\right\|_{L_{2q}}:=\left| \mathbb{E}\left[\tr_N\left(\Big(P(\G_s)P(\G_s)^*\Big)^q\right)\right]\right|^{1/2q} \leq C.$$
\end{cor}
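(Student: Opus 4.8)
The plan is to reduce the $L_{2q}$ bound to a statement about the expectation of a single pure trace polynomial and then invoke Proposition \ref{prop:convergence}. First I would observe that, since $\tr_N$ is a trace,
$$\left\|P(\G_s)\right\|_{L_{2q}}^{2q}=\mathbb{E}\left[\tr_N\left(\big(P(\G_s)P(\G_s)^*\big)^q\right)\right]=\mathbb{E}\left[\tr_N\left(M(\G_s,\G_s^*,\G_s^{-1},\G_s^{-1,*})\right)\right],$$
where $M$ is the noncommutative monomial-sum obtained by expanding $(PP^*)^q$; crucially it is a \emph{word} (more precisely a finite sum of words) in the $4p$ invertible variables, so $\tr_N(M(\G_s))$ is the evaluation at $\G_s$ of a pure trace polynomial $\tilde P:=\tr(M)\in E_d$ with $d=d(P,q)$ depending only on the degree of $P$ and on $q$. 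This is exactly the setting of Proposition \ref{prop:convergence}, which asserts the existence of a constant $C'>0$ — depending on $t$ and on $\tilde P$, hence only on $t$, $\deg P$ and $q$ — such that $\left|\mathbb{E}[\tilde P(\G_s)]\right|\le C'$ uniformly for $0\le s\le t$ and all $N$.

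The key steps in order are therefore: (i) write $(PP^*)^q$ as a finite $\mathbb{C}$-linear combination of monomials in $X_\ell,X_\ell^*,X_\ell^{-1},X_\ell^{-1,*}$, using that each of $\G_s,\G_s^*,\G_s^{-1},\G_s^{-1,*}$ is one of the admissible indeterminates (here one uses Proposition \ref{SDEforinverse} to make sense of the inverses); (ii) apply $\tr_N$ and recognize the result as $\tilde P(\G_s)$ for a pure trace polynomial $\tilde P$ of controlled degree, exploiting that $\tr\,\mathbb{C}\{\cdots\}$-calculus is precisely designed so that $\tr(M)(\G_s)=\tr_N(M(\G_s))$; (iii) apply the second, uniform-boundedness conclusion of Proposition \ref{prop:convergence} to $\tilde P\in E_d$ to get $\left|\mathbb{E}[\tilde P(\G_s)]\right|\le C'$; (iv) take the $(2q)$-th root, setting $C:=(C')^{1/2q}$ (and noting $C'\ge 0$ may be replaced by $|C'|$ to be safe, or one simply observes the quantity inside is a nonnegative real since $(PP^*)^q$ is a positive operator, so $\mathbb{E}[\tr_N(\cdots)]\ge 0$ and the root is legitimate).

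There is essentially no obstacle here: the corollary is a direct packaging of Proposition \ref{prop:convergence} together with the observation that $(PP^*)^q$ — and hence its trace — lies in the algebra of pure trace polynomials of bounded degree. The only point requiring a word of care is the bookkeeping that the degree $d$ of $\tilde P$ is indeed finite and depends only on $(\deg P, q)$ and not on $N$ or $s$, so that the constant $C'$ furnished by Proposition \ref{prop:convergence} is uniform in $N$ and in $s\in[0,t]$ as claimed; this is immediate from the definition of the grading on $\tr\,\mathbb{C}\{X_\ell,X_\ell^*,X_\ell^{-1},X_\ell^{-1,*}\}$, under which the degree of $\tr(M)$ is the degree of the monomial $M$, which is at most $2q\deg P$ (counting each occurrence of any variable, inverse included). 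Writing out these routine degree estimates and the expansion of $(PP^*)^q$ completes the proof.
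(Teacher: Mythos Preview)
Your proposal is correct and follows essentially the same route as the paper: recognize $\tr_N\big((P(\G_s)P(\G_s)^*)^q\big)$ as the evaluation at $\G_s$ of a pure trace polynomial of bounded degree, then invoke the uniform bound in Proposition~\ref{prop:convergence} and take the $(2q)$-th root. Your degree bookkeeping (degree at most $2q\deg P$) is in fact more careful than the paper's, which writes $E_{2d}$ where $d=\deg P$, apparently omitting the factor $q$.
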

\begin{proof}
    Note that if $d$ is the degree of $P$, there exists a pure trace polynomial $Q\in E_{2d}$ such that
$$\tr_N\left(\Big(P(\G_s)P(\G_s)^*\Big)^q\right)=Q(\G_s).$$
    Proposition~\ref{prop:convergence} then gives us a uniform bound for
    $$ \left|\mathbb{E}\left[\tr_N\left(\Big(P(\G_s)P(\G_s)^*\Big)^q\right)\right]\right|^{1/2q}=\left|\mathbb{E}\left[Q(\G_s)\right]\right|^{1/2q}.$$
\end{proof}
\paragraph*{Free It\^{o} calculus}
The foundation of stochastic integration with respect to  a free semicircular Brownian motion was laid by Biane and Speicher~\cite{BianeSpeicher}, whose work represents a milestone in the development of free stochastic calculus. In particular, they introduced the first version of a free It\^{o} product rule \cite[Proposition 4.3.4]{BianeSpeicher}: for any free semicircular Brownian motion $(s_t)_{t\geq 0} $ and any continuous adapted process $(V_t)_{t\geq 0}$ in $(\mathcal{A},\varphi)$, the quadratic variation process is computed following the rule
$$d\langle s,s\rangle_t\sharp V_t=\varphi(V_t)1_{\mathcal{A}}\ dt.$$
In other words, for any continuous adapted processes $(A_t)_{t\geq 0}$, $(B_t)_{t\geq 0}$, $(C_t)_{t\geq 0}$, $(D_t)_{t\geq 0}$, if the processes $(m_t)_{t\geq 0}$ and $(n_t)_{t\geq 0}$ satisfy the free SDEs
\begin{align*}
    dm_t&=A_t\cdot  ds_t\cdot  B_t\\
    dn_t&=C_t\cdot ds_t\cdot D_t,
\end{align*}
then the process $(m_tn_t)_{t\geq 0}$ satisfies the free SDE
\begin{equation}
    d(m_tn_t)=dm_t\cdot n_t+m_t\cdot dn_t+d\langle m,n\rangle_t\label{eq:quadcov} 
\end{equation}
where the quadratic variation process is
$$
    d\langle m,n\rangle_t =A_t\cdot d\langle s,s\rangle_t\sharp (B_tC_t)\cdot D_t\\
    =A_t\varphi(B_tC_t)D_t\ dt.$$
Note that if $(m_t)_{t\geq 0}$ or $(n_t)_{t\geq 0}$ is an ordinary integral, we have the classical product rule $d(m_tn_t)=dm_t\cdot n_t+m_t\cdot dn_t$, or equivalently $d\langle m,n\rangle_t=0$. As in \cite{Hall-Ho-23}, we require a generalization of this result to the setting of several freely independent semicircular Brownian motions: whenever $(m_t)_{t\geq 0}$ and  $(n_t)_{t\geq 0}$ satisfy free SDEs 
\begin{align*}
    dm_t&=A_t\cdot  dx_t\cdot  B_t\\
    dn_t&=C_t\cdot dy_t\cdot D_t,
\end{align*}
driven by freely independent semicircular Brownian motions $(x_t)_{t\geq 0}$ and $(y_t)_{t\geq 0}$, we have again $d\langle m,n\rangle_t=0$, i.e.$$d(m_tn_t)=dm_t\cdot n_t+m_t\cdot dn_t.$$

For $\ell=1,\ldots,p$,  we recall that $(z_t^{(\ell)})_{t\geq0}=(z_{\lambda, \tau}^{(\ell)}(t))_{t\geq0}$ is a family of freely independent free rotated elliptic Brownian motions with parameters $\lambda, \tau$. As in \cite[Section 4.3]{Hall-Ho-23}, the following It\^{o} rules for the processes $(z_t^{(\ell)})_{t\geq0}$ are obtained from~\eqref{eq:quadcov}:
\begin{align}
    d\langle {z^{(\ell)}},z^{(\ell')}\rangle_t\sharp V_t &=\delta_{\ell,\ell'}(\lambda-\tau)\varphi (V_t)1_\mathcal{A}\ dt, \label{crochetz}\\
    d\langle  {z^{(\ell)}}^*, {z^{(\ell')}}^*\rangle_t\sharp V_t &=\delta_{\ell,\ell'}(\lambda-\bar{\tau}) \varphi (V_t)1_\mathcal{A}\ dt, \label{crochetzstar}\\
    d\langle  {z^{(\ell)}}, {z^{(\ell')}}^*\rangle_t\sharp V_t &= d\langle  {z^{(\ell)}}^*, {z^{(\ell')}}\rangle_t\sharp V_t=
   \delta_{\ell,\ell'} \lambda \varphi (V_t)1_\mathcal{A}\ dt\label{crochetzzstar}
\end{align}
(observe the similarity with \eqref{crochetZ},  \eqref{crochetZstar} and  \eqref{crochetZZstar}). The free multiplicative $(\lambda,\tau)$-Brownian motions $(g_t^{(\ell)})_{t\geq0}=(g_{\lambda, \tau}^{(\ell)}(t))_{t\geq0}$ are defined for any $t\geq 0$ as the solutions of the free stochastic differential equation 
\begin{equation}\label{free Stoch Diff Eq with sigma}
    dg_t^{(\ell)} =\sigma_\ell \, g_t^{(\ell)} \big( i \, dz_t^{(\ell)}  -\frac{\sigma_\ell}{2}(\lambda-\tau)\, dt \big) \quad \text{with}\quad g_t^{(\ell)} =1_\A.
\end{equation}
Note that $({g_t^{(\ell)}}^{\ast})_{t\geq 0}$, $({g_t^{(\ell)}}^{-1})_{t\geq 0}$ and $(({g_t^{(\ell)}}^{-1})^\ast)_{t\geq 0}$ satisfy respectively the  multiplicative free SDEs:
\begin{align*}
  d{g_t^{(\ell)}}^*&=-i {\sigma}    d{z_t^{(\ell)}}^* {g_t^{(\ell)}}^*  -\frac{\sigma_{\ell}^2}{2}(\lambda-\bar{\tau}){g_t^{(\ell)}}^* \, dt,
\\ 
  d{g_t^{(\ell)}}^{-1} &=-i \sigma  \, d{z_t^{(\ell)}} {g_t^{(\ell)}}^{-1} -\frac{\sigma_{\ell}^2}{2}(\lambda-\tau){g_t^{(\ell)}}^{-1}dt ,
  \\
  d({g_t^{(\ell)}}^{-1})^\ast &=i \sigma \,({g_t^{(\ell)}}^{-1})^\ast d{z_t^{(\ell)}}^*  -\frac{\sigma_{\ell}^2}{2}(\lambda-\bar{\tau})({g_t^{(\ell)}}^{-1})^\ast dt ,
\end{align*}
all initiated at the identity $1_{\mathcal{A}}$  (the proof of~\cite[Proposition~4.17]{kemp2016large} for the SDE of $({g_t^{(\ell)}}^{-1})_{t\geq 0}$ works \emph{mutatis mutandis} for the $(\lambda,\tau)$-Brownian motion). Using~\eqref{crochetz}, \eqref{crochetzstar} and \eqref{crochetzzstar}, it yields the following quadratic covariation processes (note again the similarities with Lemma~\ref{Lemma:Calcul QuadraticCov}).

\begin{lemma}\label{freequadcov}Let $(g_t)_{t\geq 0}$ denote one of the free $(\lambda,\tau)$-Brownian motion $({g_t^{(\ell)}}^{-1})_{t\geq 0}$, and for convenience, we denote by $(g_t^{-1,\ast})_{t\geq 0}$ the process $(({g_t^{(\ell)}}^{-1})^\ast)_{t\geq 0}$.
For any adapted process $V_t$, we have
{\small \begin{center}
\begin{tabular}{l|l}
$d\langle g ,g \rangle_t \sharp V_t = \sigma_\ell^2 (\tau-\lambda) g_t \varphi (g_t V_t)dt$ & $d\langle g , g^{-1} \rangle_t \sharp V_t = -\sigma_\ell^2 (\tau-\lambda)  \varphi ( V_t)dt$
\\ 
$d\langle g , g^\ast \rangle_t \sharp V_t = \sigma_\ell^2 \lambda g_t g_t^\ast\varphi ( V_t)dt$ & $d\langle g , g^{-1,\ast} \rangle_t \sharp V_t = -\sigma_\ell^2 \lambda g_t \varphi (g_t^{-1,\ast} V_t)dt$\\
$d\langle g^{-1} , g \rangle_t \sharp V_t = -\sigma_\ell^2 (\tau-\lambda)  \varphi ( V_t)dt$ & $d\langle g^{-1} , g^{-1} \rangle_t \sharp V_t = \sigma_\ell^2 (\tau-\lambda) g_t^{-1} \varphi (g_t^{-1} V_t)dt$
\\ 
$d\langle g^{-1} , g^\ast \rangle_t \sharp V_t = -\sigma_\ell^2 \lambda  g_t^\ast\varphi (g_t^{-1} V_t)dt$ & $d\langle g^{-1} , g^{-1,\ast} \rangle_t \sharp V_t = \sigma_\ell^2 \lambda  \varphi (g_t^{-1} V_tg_t^{-1,\ast})dt$\\
$d\langle g^\ast , g \rangle_t \sharp V_t = \sigma_\ell^2 \lambda  \varphi (g_t^\ast V_tg_t)I_Ndt$ & $d\langle g^\ast , g^{-1} \rangle_t \sharp V_t = -\sigma_\ell^2 \lambda g_t^{-1} \varphi ( g_t^\ast V_t)dt$
\\ 
$d\langle g^\ast , g^\ast \rangle_t \sharp V_t = {\sigma_\ell}^2 (\bar{\tau}-\lambda)  g_t^\ast\varphi (g_t^\ast V_t)dt$ & $d\langle g^\ast , g^{-1,\ast} \rangle_t \sharp V_t = -{\sigma_\ell}^2 (\bar{\tau}-\lambda)  \varphi ( g_t^\ast V_t g_t^{-1,\ast})dt$\\
{\small $d\langle g^{-1,\ast} , g \rangle_t \sharp V_t =  -\sigma_\ell^2 \lambda g_t^{-1,\ast} \varphi (g_t V_t)dt$} & {\small
$d\langle g^{-1,\ast} , g^{-1} \rangle_t \sharp V_t = \sigma_\ell^2 \lambda g_t^{-1,\ast} g_t^{-1} \varphi ( V_t)dt$}
\\ 
{\small $d\langle g^{-1,\ast}  ,  g^\ast \rangle_t \sharp V_t = -{\sigma_\ell}^2 (\bar{\tau}-\lambda)  \varphi ( V_t)dt$ } & {\small $d\langle g^{-1,\ast} ,   g^{-1,\ast} \rangle_t  \sharp V_t = {\sigma_\ell}^2 (\bar{\tau}-\lambda)  g_t^{-1,\ast} \varphi  ( g_t^{-1,\ast}V_t) dt$}
\end{tabular}

\end{center} }
\end{lemma}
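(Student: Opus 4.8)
The plan is to prove Lemma~\ref{freequadcov} by transporting the matrix computation of Lemma~\ref{Lemma:Calcul QuadraticCov} to the free setting through the obvious dictionary: matrices become elements of $\mathcal{A}$, the normalized trace $\tr_N$ becomes the tracial state $\varphi$, and the rotated elliptic matrix Brownian motion $Z_{\lambda,\tau}^{(\ell)}$ becomes the free rotated elliptic Brownian motion $z_{\lambda,\tau}^{(\ell)}$, whose free It\^o rules \eqref{crochetz}, \eqref{crochetzstar} and \eqref{crochetzzstar} are the precise analogues of \eqref{crochetZ}, \eqref{crochetZstar} and \eqref{crochetZZstar}. First I would record that each of the four processes $g_t$, $g_t^\ast$, $g_t^{-1}$, $g_t^{-1,\ast}$ (for the fixed index $\ell$) solves a free SDE of the form $dm_t = A_t\, d\nu_t\, B_t + (\text{finite variation})$, with $\nu\in\{z^{(\ell)},{z^{(\ell)}}^\ast\}$ and with $A_t,B_t$ read off from \eqref{free Stoch Diff Eq with sigma} and the three SDEs displayed just above the lemma; in particular the driving noise sits on the right for $g$ and $g^{-1,\ast}$ and on the left for $g^\ast$ and $g^{-1}$, which is exactly the asymmetry that produces the different left/right placements of the factors in the table.

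Next, for each ordered pair among these four processes I would use bilinearity of the quadratic covariation together with the free It\^o product rule \eqref{eq:quadcov}, which in the ``$\int A\,d\nu\,B$'' formalism reads $d\langle \int_0^\cdot A\,d\nu^{(1)}\,B,\ \int_0^\cdot C\,d\nu^{(2)}\,D\rangle_t\sharp V = A\bigl[d\langle \nu^{(1)},\nu^{(2)}\rangle_t\sharp(BVC)\bigr]D$, the free counterpart of \eqref{compute brackets}. Since the finite-variation (drift) parts contribute nothing to any bracket, each table entry collapses to a single term; substituting the appropriate rule among \eqref{crochetz}--\eqref{crochetzzstar} and simplifying with traciality $\varphi(XY)=\varphi(YX)$ yields the stated expression. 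For instance, from $dg_t = i\sigma_\ell\,g_t\,dz_t^{(\ell)} + (\text{f.v.})$ one gets $d\langle g,g\rangle_t\sharp V_t = (i\sigma_\ell)^2\,g_t\bigl[d\langle z^{(\ell)},z^{(\ell)}\rangle_t\sharp(V_tg_t)\bigr] = -\sigma_\ell^2(\lambda-\tau)\,g_t\,\varphi(V_tg_t)\,dt = \sigma_\ell^2(\tau-\lambda)\,g_t\,\varphi(g_tV_t)\,dt$, while a mixed bracket such as $d\langle g,g^{-1,\ast}\rangle_t$ instead invokes the cross-rule \eqref{crochetzzstar} between $z^{(\ell)}$ and ${z^{(\ell)}}^\ast$ together with $dg_t^{-1,\ast} = i\sigma_\ell\,g_t^{-1,\ast}\,d{z_t^{(\ell)}}^\ast + (\text{f.v.})$.

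Carrying this out for all sixteen ordered pairs is entirely mechanical, and because the manipulation is term-by-term identical to the one behind Lemma~\ref{Lemma:Calcul QuadraticCov} under the dictionary above, in the write-up I would verify one or two representative entries in full and declare the remaining ones to follow \emph{mutatis mutandis}. I do not expect a genuine obstacle here: existence and uniqueness of the solutions are already granted, and free independence of the $z^{(\ell)}$'s makes all brackets across distinct indices vanish. The only point requiring care is bookkeeping --- tracking signs, distinguishing $\tau$ from $\bar\tau$ according to whether $z^{(\ell)}$ or ${z^{(\ell)}}^\ast$ enters the bracket, and keeping the left/right order of factors straight under the $\sharp$-operation, which differs between the pair $\{g,g^{-1,\ast}\}$ and the pair $\{g^\ast,g^{-1}\}$.
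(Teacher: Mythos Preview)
Your proposal is correct and takes essentially the same approach as the paper: the paper does not give a separate proof but simply states that the table follows from the free It\^o rules \eqref{crochetz}--\eqref{crochetzzstar} applied to the SDEs for $g_t$, $g_t^\ast$, $g_t^{-1}$, $g_t^{-1,\ast}$, pointing out the parallel with Lemma~\ref{Lemma:Calcul QuadraticCov}. Your write-up is a faithful (and more explicit) unpacking of exactly that.
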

We are now ready to compute the generator of the process $\g_s$.
\begin{lemma}
    \label{freeGenerator}
For any pure trace polynomial $P$, we have
$$\frac{d}{ds}P(\g_s)=\Delta P(\g_s).$$
\end{lemma}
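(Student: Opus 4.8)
The strategy is the free analogue of the proof of Lemma~\ref{Generator}: we differentiate $P(\g_s)$ using the free It\^o product rule and show that the resulting drift is exactly $\Delta P(\g_s)$, with no $\frac{1}{N^2}\tilde\Delta$ term surviving. First I would write $P=\tr(M_1)\cdots\tr(M_n)$ with each $M_i=X_{\ell_{1,i}}^{\epsilon_{1,i}}\cdots X_{\ell_{m_i,i}}^{\epsilon_{m_i,i}}$, so that
$$P(\g_s)=\varphi\big(M_1(\g_s)\big)\cdots\varphi\big(M_n(\g_s)\big).$$
Since $\varphi$ is a trace and the processes involved are adapted semimartingales in $(\A,\varphi)$, I would apply $\varphi$ to the free It\^o product rule \eqref{eq:quadcov}, term by term in each factor, exactly as \eqref{memenombredetraces}--\eqref{creationdunetrace} in the matrix case: for each $i$,
$$d\,\varphi\big(M_i(\g_s)\big)=\varphi\big(dM_i(\g_s)\big)$$
and $dM_i(\g_s)$ splits into a ``first-order'' part (one factor $dg_{\ell}^{\epsilon}$ inserted) plus a ``quadratic'' part (one quadratic covariation bracket $d\langle g_{\ell_k}^{\epsilon_k},g_{\ell_{k'}}^{\epsilon_{k'}}\rangle_s$ inserted between positions $k$ and $k'$ inside $M_i$, nonzero only when $\ell_{k,i}=\ell_{k',i}$ because of the freeness/independence recorded in \eqref{crochetz}--\eqref{crochetzzstar}).

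The key computational point is that the drift contributions are of two kinds. The drift of the first-order part comes from the $-\frac{\sigma_\ell}{2}(\lambda-\tau)\,dt$ (and conjugate) terms in the SDEs for $g_t^{(\ell)}, (g_t^{(\ell)})^*, (g_t^{(\ell)})^{-1}, ((g_t^{(\ell)})^{-1})^*$; summing over the factors in $M_i$ produces precisely the coefficient $\frac{\sigma_\ell^2}{2}\{d_\ell(\tau-\lambda)+d_\ell^*(\bar\tau-\lambda)+d_\ell^{(-1)}(\tau-\lambda)+d_\ell^{(-1,*)}(\bar\tau-\lambda)\}$ acting on $P$, matching \eqref{premiertermeDelta}. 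The quadratic part, by Lemma~\ref{freequadcov}, always has the form $d\langle g_{\ell}^\epsilon,g_{\ell}^{\hat\epsilon}\rangle_s\sharp V_s=\sigma_\ell^2\,Q_1^{(\epsilon,\hat\epsilon)}(g_\ell)\,\varphi\big(Q_2^{(\epsilon,\hat\epsilon)}(g_\ell)V_s\big)\,ds$ for the very polynomials $Q_1^{(\epsilon_1,\epsilon_2)},Q_2^{(\epsilon_1,\epsilon_2)}$ tabulated in the definition of $\Delta$ — this is the free mirror of Lemma~\ref{Lemma:Calcul QuadraticCov}. Applying $\varphi$ to the insertion, the trace $\varphi$ factorizes the bracket into a product of two traces, i.e. $\varphi(\cdots Q_1^{(\epsilon,\hat\epsilon)}\cdots)\varphi(Q_2^{(\epsilon,\hat\epsilon)}\cdots)$, splitting $\tr(M_i)$ into a product of two new trace factors and thus raising the number of traces by one, exactly as in \eqref{derniertermeDelta}.

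Crucially, the cross terms (the $(II)$-type contribution in Lemma~\ref{Generator}, between two distinct factors $M_i$ and $M_{i'}$) are \emph{absent} here: $\varphi\big(M_1(\g_s)\big)\cdots\varphi\big(M_n(\g_s)\big)$ is literally a product of scalars, so its It\^o differential has no term coupling different factors — there is no analogue of the $\frac{1}{N^2}$ bracket between $M_i$ and $M_{i'}$ that generated $\tilde\Delta$ in the matrix setting. This is precisely why the free generator is $\Delta$ alone. Collecting the first-order and quadratic drift contributions and comparing with \eqref{premiertermeDelta}--\eqref{derniertermeDelta} gives $\frac{d}{ds}P(\g_s)=\Delta P(\g_s)$ by linearity, after extending from the basis elements $\tr(M_1)\cdots\tr(M_n)$ to all pure trace polynomials.

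\textbf{Main obstacle.} The only delicate bookkeeping is verifying that the quadratic covariation table in Lemma~\ref{freequadcov} reproduces, factor-by-factor and sign-by-sign, the $Q_1^{(\epsilon_1,\epsilon_2)},Q_2^{(\epsilon_1,\epsilon_2)}$ tables used to define $\Delta$ — in particular that, when the inserted variable $X_{\ell_{k,i}}^{\epsilon_{k,i}}$ has an inverse or adjoint decoration, the bracket lands on the right side of the product (so that $Q_1$ sits where $X_{\ell_{k,i}}^{\epsilon_{k,i}}$ was and $Q_2$ opens the newly created trace). Since Lemma~\ref{freequadcov} is stated to mirror Lemma~\ref{Lemma:Calcul QuadraticCov} term for term, and the matrix computation in Lemma~\ref{Generator} has already been carried out against exactly these $Q_1,Q_2$ tables, this verification is routine: replace $\tr_N$ by $\varphi$, $I_N$ by $1_\A$, drop every $(II)$/$\tilde\Delta$ contribution, and the identity follows.
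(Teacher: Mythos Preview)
Your proposal is correct and follows essentially the same approach as the paper: reduce to a product of scalar factors $\varphi(M_i(\g_s))$ so that no cross-brackets between distinct $M_i,M_{i'}$ arise (this is exactly why $\tilde\Delta$ disappears), and then compute $d\varphi(M_i(\g_s))$ via the free It\^o product rule using Lemma~\ref{freequadcov}, matching the drift and quadratic-covariation contributions with \eqref{premiertermeDelta} and \eqref{derniertermeDelta}. The paper phrases the reduction slightly differently---observing that both $\frac{d}{ds}P(\g_s)$ and $\Delta P(\g_s)$ satisfy the same Leibniz-type decomposition over the factors $\tr(M_i)$, hence it suffices to treat $P=\tr(M)$---but the content is identical.
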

\begin{proof}
First-of-all, if $P=\tr(M_1)\cdots \tr(M_n)\in \tr\ \mathbb{C}\{X_{\ell},X_{\ell}^*,X_{\ell}^{-1},X_{\ell}^{-1,*}:1\leq \ell \leq p\}$ is an element of the standard basis, let us remark that
\begin{align*}
    \frac{d}{ds}P(\g_s)&=\frac{d}{ds}\left( \varphi\Big(M_1(\g_s)\Big) \cdots \varphi\Big(M_n (\g_s)\Big)\right)\\
    &=\sum_{i=1}^n\prod_{i' \neq i} \varphi(M_{i'}(\g_s))\cdot \frac{d}{ds}\varphi(M_{i}(\g_s)),
\end{align*}
and similarly, using the definition of $\Delta$,
\begin{align*}
   \Delta P(\g_s)&=\left(\sum_{i=1}^n\prod_{i' \neq i} \tr(M_{i'})\cdot \Delta(\tr(M_i))\right)(\g_s)\\
   &=\sum_{i=1}^n\prod_{i' \neq i} \varphi(M_{i'}(\g_s))\cdot \Big(\Delta(\tr(M_i))\Big)(\g_s),
\end{align*}
As a consequence, it suffices to prove the result in the case where $P=\tr(M)$ for a monomial~$M$. However, the computation of
    $\frac{d}{ds}P(\g_s)=\frac{d}{ds}\varphi[M(\g_s)]$ can be done explicitely. Indeed,
    $$d(P(\g_s))=d(\varphi[M(\g_s)])=\varphi[d(M(\g_s))]=\Delta P(\g_s) ds$$
    where the computation of $d(M(\g_s))$ is given by the free It\^{o} product rule, with the quadratic covariance processes given by Lemma~\ref{freequadcov} (we only need the quadratic covariance processes and the drifts because the martingale parts are vanishing under $\varphi$).
\end{proof}

Using \cite[Lemma 2.5]{cebron2013free}, we get the following result as a direct corollary of Lemma~\ref{freeGenerator}.

\begin{lemma}
Let $P\in E_d$ and $s\geq 0$. We have
$$P(\g_s)=\left(e^{s\Delta}P\right)(1_\mathcal{A}).$$
\end{lemma}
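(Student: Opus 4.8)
The plan is to recognise this identity as the solution of a finite‑dimensional linear ODE, exactly in the way the matrix counterpart (the lemma stated just before Proposition~\ref{prop:convergence}) is obtained. Recall that $E_d$ is finite dimensional and, as observed above, invariant under $\Delta$; hence $\Delta$ restricts to an endomorphism of $E_d$ and $e^{s\Delta}$ is a well‑defined operator on $E_d$ for every $s\geq 0$. Note also that $\g_s$ consists of invertible elements (Proposition~\ref{Prop:stationarity}), so the $\tr\ \mathbb{C}\{X_{\ell},X_{\ell}^*,X_{\ell}^{-1},X_{\ell}^{-1,*}:1\leq\ell\leq p\}$-calculus does apply at $\g_s$.

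First I would introduce, for $s\geq 0$, the linear functional $L_s\colon E_d\to\mathbb{C}$ given by $L_s(Q)=Q(\g_s)$; this is linear in $Q$ by the very definition of the pure trace calculus (extension by linearity from the standard basis). Lemma~\ref{freeGenerator} then reads $\frac{d}{ds}L_s(Q)=L_s(\Delta Q)$ for every $Q\in E_d$; equivalently, the curve $s\mapsto L_s$ in the finite‑dimensional space $E_d^{\,*}$ solves the linear differential equation $L_s'=\Delta^\top L_s$, where $\Delta^\top$ denotes the transpose of $\Delta|_{E_d}$. By uniqueness of solutions of linear ODEs in finite dimension — which is precisely \cite[Lemma 2.5]{cebron2013free}, the same tool invoked for the matrix model — one obtains $L_s=e^{s\Delta^\top}L_0=(e^{s\Delta})^\top L_0$, that is $L_s(Q)=L_0\big(e^{s\Delta}Q\big)$ for all $Q\in E_d$.

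It remains to identify $L_0$. Since $g^{(\ell)}_0=1_{\mathcal{A}}$ for every $\ell$, any monomial evaluated at $\g_0$ equals $1_{\mathcal{A}}$, so its image under $\varphi$ is $1$; extending multiplicatively across the factors of a pure trace monomial and then linearly over $E_d$, we get $L_0(Q)=Q(\g_0)=Q(1_{\mathcal{A}})$ for all $Q\in E_d$ — the same normalised evaluation form that appears in the matrix computation. Taking $Q=P$ then yields $P(\g_s)=L_s(P)=\big(e^{s\Delta}P\big)(1_{\mathcal{A}})$, which is the claim.

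I do not expect a genuine obstacle: the whole content is already contained in the stochastic computation of Lemma~\ref{freeGenerator}, and the passage to the exponential is routine linear algebra on $E_d$. The only point that needs a little care is the regularity required to invoke ODE uniqueness — one must know that $s\mapsto L_s$ is continuously differentiable, not merely that a derivative exists — but this is automatic, since by Lemma~\ref{freeGenerator} the derivative is the curve $s\mapsto L_s(\Delta\,\cdot\,)$, which is continuous in $s$, and iterating shows $s\mapsto L_s$ is in fact smooth.
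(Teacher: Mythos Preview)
Your proof is correct and follows essentially the same approach as the paper: the paper simply states that the lemma is a direct corollary of Lemma~\ref{freeGenerator} via \cite[Lemma 2.5]{cebron2013free}, which is exactly the ODE argument you spell out (with your additional care about regularity being a welcome but inessential elaboration).
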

Because $\varphi$ is normalized, the maps $P\mapsto P(1_{\mathcal{A}})$ and $P\mapsto P(1)$ are the same linear form. Together with Proposition~\ref{prop:convergence}, the equality
$P(\g_s)=\left(e^{s\Delta}P\right)(1)$
yields Theorem~\ref{th:weakconv}. We also get a uniform bound for the $L_{2q}$-norms of any polynomial in $\g_s$.

\begin{cor}
    \label{NormeLp-g}
    Let $t\geq 0$, $q\in \mathbb{N}$ and let $P\in \mathbb{C}\langle X_{\ell},X_{\ell}^*,X_{\ell}^{-1},X_{\ell}^{-1,*}:1\leq \ell \leq p\rangle$ be any polynomial in $4p$ variables. There exists a constant $C>0$ such that, for all $0\leq s \leq t$, we have
    $$\left\|P(\g_s)\right\|_{L_{2q}}:=\left| \varphi\left(\Big(P(\g_s)P(\g_s)^*\Big)^q\right)\right|^{1/2q} \leq C.$$
\end{cor}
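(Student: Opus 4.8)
The plan is to reproduce, almost verbatim, the proof of Corollary~\ref{NormeLp}, with the matrix model $\G_s$ and the normalized trace $\tr_N$ replaced by the free limit $\g_s$ and the tracial state $\varphi$; the key point is that $\varphi\big((P(\g_s)P(\g_s)^\ast)^q\big)$ is a pure trace polynomial evaluated at $\g_s$, hence is controlled by the heat-semigroup representation of the previous paragraphs.

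First I would fix $P$ and let $d$ be its degree. Since $g_s^{(\ell)}$ is invertible for every $\ell$ by Proposition~\ref{Prop:stationarity}, the arguments $\g_s^{-1},\g_s^{-1,\ast}$ are legitimate, and $\big(P(\g_s)P(\g_s)^\ast\big)^q$ is obtained by the polynomial calculus on the $4p$-tuple $(\g_s,\g_s^\ast,\g_s^{-1},\g_s^{-1,\ast})$ from the fixed noncommutative polynomial $(PP^\ast)^q$, which has degree $2qd$. Hence $Q:=\tr\big((PP^\ast)^q\big)$ is, by definition, an element of the pure trace polynomial algebra $\tr\ \mathbb{C}\{X_{\ell},X_{\ell}^\ast,X_{\ell}^{-1},X_{\ell}^{-1,\ast}:1\leq \ell \leq p\}$, belonging to the finite-dimensional space $E_{2qd}$, and by construction
$$\varphi\Big(\big(P(\g_s)P(\g_s)^\ast\big)^q\Big)=Q(\g_s).$$

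Next I would invoke the lemma proved just above, which gives $Q(\g_s)=\big(e^{s\Delta}Q\big)(1)$, together with the fact that $\Delta$ is a graded linear map leaving $E_{2qd}$ invariant, so $e^{s\Delta}$ is a bounded operator on this finite-dimensional space. Writing $\phi$ for the (bounded) evaluation-at-$1$ linear form on $E_{2qd}$ and $\|\Delta\|$ for the operator norm of $\Delta$ there, exactly as at the end of the proof of Proposition~\ref{prop:convergence} one gets
$$\Big|\varphi\Big(\big(P(\g_s)P(\g_s)^\ast\big)^q\Big)\Big|=\big|(e^{s\Delta}Q)(1)\big|\leq \|\phi\|\,e^{s\|\Delta\|}\,\|Q\|_{E_{2qd}}\leq \|\phi\|\,e^{t\|\Delta\|}\,\|Q\|_{E_{2qd}},$$
and taking $2q$-th roots with $C:=\big(\|\phi\|\,e^{t\|\Delta\|}\,\|Q\|_{E_{2qd}}\big)^{1/2q}$ gives the bound, uniformly in $s\in[0,t]$. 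There is essentially no obstacle here: the entire content is the reduction of the operator-valued quantity $\big(P(\g_s)P(\g_s)^\ast\big)^q$ to a single-trace pure trace polynomial lying in a \emph{fixed} finite-dimensional space on which $e^{s\Delta}$ acts boundedly, which is exactly what makes the constant $C$ independent of $s$ (and, unlike in Corollary~\ref{NormeLp}, there is no longer any dependence on $N$ to track).
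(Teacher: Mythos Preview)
Your proof is correct and follows exactly the same approach as the paper: reduce $\varphi\big((P(\g_s)P(\g_s)^\ast)^q\big)$ to a pure trace polynomial $Q$ evaluated at $\g_s$, use the representation $Q(\g_s)=(e^{s\Delta}Q)(1)$, and bound it via the operator norm of $e^{s\Delta}$ on the finite-dimensional space as in Proposition~\ref{prop:convergence}. Your degree bookkeeping ($Q\in E_{2qd}$) is in fact more accurate than the paper's stated $E_{2d}$, which appears to be a typo.
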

\begin{proof}
    Note that if $d$ is the degree of $P$, there exists a pure trace polynomial $Q\in E_{2d}$ such that
$$\varphi\left(\Big(P(\g_s)P(\g_s)^*\Big)^q\right)=Q(\g_s)=e^{s\Delta}Q(1)$$
which is bounded uniformly for $s\leq t$ as in the proof of Proposition~\ref{prop:convergence}.
\end{proof}
\subsection{The left-invariant and the right-invariant Brownian motions}
In this article, we are mainly interested with the \emph{left-invariant} multiplicative $(\lambda,\tau)$-Brownian motion $(G_{\lambda, \tau}(t))_{t\geq 0}$ satisfying the following SDE:
\begin{equation*}
    dG_{\lambda, \tau}(t)= \, G_{\lambda, \tau}(t)\big( i \, dZ_{\lambda, \tau}(t) -\frac{1}{2}(\lambda-\tau)\, dt \big) \quad \text{with}\quad G_{\lambda, \tau}(0)=I_N.
\end{equation*}
However, it corresponds to a diffusion on the general linear group (see \cite{mckean2024stochastic}), and the density of the distribution of $G_{\lambda, \tau}(t)$ (for any fixed $t\geq 0$) is the heat kernel, i.e. the solution of the heat equation relative to some \emph{left-invariant} Laplacian. 

In the proof of the main result, we also need to consider the \emph{right-invariant} multiplicative $(\lambda,\tau)$-Brownian motion $(\widetilde{G}_{\lambda, \tau}(t))_{t\geq 0}$ satisfying the following SDE:
\begin{equation*}
d\widetilde{G}_{\lambda, \tau}(t)= \, \big( i \, d\widetilde{Z}_{\lambda, \tau}(t) -\frac{1}{2}(\lambda-\tau)\, dt \big)\widetilde{G}_{\lambda, \tau}(t) \quad \text{with}\quad \widetilde{G}_{\lambda, \tau}(0)=I_N.
\end{equation*}
It corresponds also to a diffusion on the general linear group (see \cite{mckean2024stochastic}), with the density of the distribution of $\widetilde{G}_{\lambda, \tau}(t)$ (for any fixed $t\geq 0$) given by (\emph{a priori}) another heat kernel, i.e. the solution of the heat equation relative to some \emph{right-invariant} Laplacian. 

Now, because $(Z_{\lambda, \tau}(t))_{t\geq 0}$ and $(\widetilde{Z}_{\lambda, \tau}(t))_{t\geq 0}$ have the same law (a rotated elliptic Brownian motion), the left-invariant Laplacian and the right-invariant Laplacian are relative to one same and single metric. As a consequence, the two solutions to the two heat equations (one for each Laplacian) starting at $I_N$ are the same (see
\cite[Theorem 2.7]{driver1995kakutani}) and for any fixed $t\geq 0$, the random matrices $G_{\lambda, \tau}(t)$ and $\widetilde{G}_{\lambda, \tau}(t)$ have the same probability distribution. Of course, a simultaneous change of time do not change the equality in law, and taking $p$ independent copies neither. The following proposition summarizes the above discussion.

\begin{proposition}\label{samelaw}
    We let, for any $\ell=1,\dots, p$,  $(Z_t^{(\ell)})_{t\geq0}:=(Z_{\lambda, \tau}^{(\ell)}(t))_{t\geq0}$ be a family of independent rotated elliptic Brownian motions and consider the \emph{left-invariant} multiplicative $(\lambda,\tau)$-Brownian motions $(G_t^{(\ell)})_{t\geq0}:=(G_{\lambda, \tau}^{(\ell)}(t))_{t\geq0}$ defined for any $t\geq 0$ as the solution of the  stochastic differential equation 
\begin{equation*}
    dG_t^{(\ell)}=\sigma_\ell \, G_t^{(\ell)} \big( i \, dZ_t^{(\ell)}  -\frac{\sigma_\ell}{2}(\lambda-\tau)\, dt \big) \quad \text{with}\quad G_0^{(\ell)}=I_N,
\end{equation*}
for some positive real number $\sigma_\ell$. Similarly,  we let, for any $\ell=1,\dots, p$,  $(\widetilde{Z}_t^{(\ell)})_{t\geq0}:=(\widetilde{Z}_{\lambda, \tau}^{(\ell)}(t))_{t\geq0}$ be a family of independent rotated elliptic Brownian motions and consider the  \emph{right-invariant} multiplicative $(\lambda,\tau)$-Brownian motions $(\widetilde{G}_t^{(\ell)})_{t\geq0}:=(\widetilde{G}_{\lambda, \tau}^{(\ell)}(t))_{t\geq0}$ defined for any $t\geq 0$ as the solution of the  stochastic differential equation 
\begin{equation*}
    d\widetilde{G}_t^{(\ell)}=\sigma_\ell \, \big( i \, d\widetilde{Z}_t^{(\ell)}  -\frac{\sigma_\ell}{2}(\lambda-\tau)\, dt \big)\widetilde{G}_t^{(\ell)}  \quad \text{with}\quad \widetilde{G}_0^{(\ell)}=I_N.
\end{equation*}
Then, for any $t\geq 0$, the $p$-tuples 
\begin{equation*}
    \G_t:= \big(G_t^{(1)} , \dots, G_t^{(p)} \big)
    \quad \text{and} \quad
    \widetilde{\G}_t:= \big(\widetilde{G}_t^{(1)} , \dots, \widetilde{G}_t^{(p)} \big)
\end{equation*}
have the same probability distribution.
\end{proposition}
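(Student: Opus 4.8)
The plan is to reduce the statement, by the scaling property of the defining stochastic differential equations and the independence of the superscripted copies, to the single--time claim that $G_{\lambda,\tau}(t)\stackrel{d}{=}\widetilde{G}_{\lambda,\tau}(t)$ for each fixed $t\ge 0$. Indeed, after substituting $\sigma_\ell^2 t$ for $t$ in \eqref{Stratonovich SDE}, the process $(G_t^{(\ell)})_{t\ge 0}$ has the same law as $(G_{\lambda,\tau}(\sigma_\ell^2 t))_{t\ge 0}$, and likewise $(\widetilde{G}_t^{(\ell)})_{t\ge 0}$ has the same law as $(\widetilde{G}_{\lambda,\tau}(\sigma_\ell^2 t))_{t\ge 0}$; since on both sides the index $\ell$ labels mutually independent driving rotated elliptic Brownian motions, it suffices to treat $p=1$, $\sigma_1=1$.

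The heart of the argument is then to recognise both $G_{\lambda,\tau}(t)$ and $\widetilde{G}_{\lambda,\tau}(t)$ as time--$t$ marginals of heat flows attached to one and the same bilinear form on the Lie algebra. The left--invariant equation $dG_{\lambda,\tau}=G_{\lambda,\tau}\big(i\,dZ_{\lambda,\tau}-\tfrac12(\lambda-\tau)\,dt\big)$, which carries no drift in Stratonovich form, defines a diffusion on $GL_N(\mathbb C)$ whose generator is (up to the usual factor $\tfrac12$) the \emph{left}--invariant sub--Laplacian $\Delta_L$ built from the quadratic covariation of $i\,Z_{\lambda,\tau}$, so that the law of $G_{\lambda,\tau}(t)$ is the heat kernel measure at time $t$ based at $I_N$ for $\Delta_L$ (see \cite{mckean2024stochastic}). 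The same argument applied to $d\widetilde{G}_{\lambda,\tau}=\big(i\,d\widetilde{Z}_{\lambda,\tau}-\tfrac12(\lambda-\tau)\,dt\big)\widetilde{G}_{\lambda,\tau}$ shows that the law of $\widetilde{G}_{\lambda,\tau}(t)$ is the heat kernel measure at time $t$ based at $I_N$ for the \emph{right}--invariant sub--Laplacian $\Delta_R$ built from the quadratic covariation of $i\,\widetilde{Z}_{\lambda,\tau}$. Since $(Z_{\lambda,\tau}(t))_{t\ge 0}$ and $(\widetilde{Z}_{\lambda,\tau}(t))_{t\ge 0}$ are rotated elliptic Brownian motions with the same parameters $(\lambda,\tau)$ --- compare \eqref{crochetZ}--\eqref{crochetZZstar} --- their quadratic covariations define one and the same real bilinear form $\mathbf g$ on $\mathfrak{gl}_N(\mathbb C)$, so $\Delta_L$ and $\Delta_R$ are the left-- and right--invariant operators associated with a single $\mathbf g$.

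It then remains to invoke \cite[Theorem~2.7]{driver1995kakutani}, which identifies the heat kernel measure based at the identity for the left--invariant operator attached to $\mathbf g$ with the one for the corresponding right--invariant operator; this yields $G_{\lambda,\tau}(t)\stackrel{d}{=}\widetilde{G}_{\lambda,\tau}(t)$, and unwinding the scaling reduction above gives $\G_t\stackrel{d}{=}\widetilde{\G}_t$. The delicate step is the identification of the two generators with $\Delta_L$ and $\Delta_R$ together with the verification of the hypotheses of \cite[Theorem~2.7]{driver1995kakutani}: this has to be carried out also in the degenerate range $a=0$ or $b=0$, where $\mathbf g$ fails to be definite and the heat flow is only hypoelliptic --- and, for some values of $\theta$, is in fact supported on a proper closed subgroup of $GL_N(\mathbb C)$. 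The independence and time--change reductions, by contrast, are routine.
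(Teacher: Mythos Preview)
Your proof is correct and follows essentially the same route as the paper: reduce by independence and time change to a single copy with $\sigma=1$, identify the laws of $G_{\lambda,\tau}(t)$ and $\widetilde{G}_{\lambda,\tau}(t)$ as the heat kernel measures for the left-- and right--invariant Laplacians attached to the same bilinear form on $\mathfrak{gl}_N(\mathbb C)$, and then invoke \cite[Theorem~2.7]{driver1995kakutani}. The only difference is that you spell out the reduction steps and flag the degenerate case $a=0$ or $b=0$ as a point requiring care, whereas the paper passes over this without comment.
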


Now, we proved in section~\ref{proofofweakconv} that the noncommutative  distribution of the free multiplicative $(\lambda,\tau)$-Brownian motions $\g_t$  is the limit of the noncommutative distribution of $\G_t$. Similarly, if $\tilde{\g}_t:=(\tilde{g}_t^{(1)},\ldots,\tilde{g}_t^{(p)})$ is a $p$-tuple of  \emph{right-invariant} free multiplicative $(\lambda,\tau)$-Brownian motions, i.e.  the solutions of the free stochastic differential equations 
\begin{equation*}
    d\tilde{g}_t^{(\ell)} = \sigma_\ell \big( i \, d\tilde{z}_t^{(\ell)}  -\frac{\sigma_\ell}{2}(\lambda-\tau)\, dt \big) \tilde{g}_t^{(\ell)} \quad \text{with}\quad \tilde{g}_t^{(\ell)} =1_\A,
\end{equation*}
the noncommutative  distribution of $\tilde{\g}_t$  is the large-$N$ limit of the noncommutative distribution of $\widetilde{G}_t$ (the proof is a straightforward adaptation of section~\ref{proofofweakconv}). As a consequence, we get the following proposition.
\begin{proposition}\label{Prop:verylast}
    For any $t\geq 0$, the noncommutative distribution of $\g_t$ and $\tilde{\g}_t$ coincide.
\end{proposition}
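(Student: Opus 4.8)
The plan is to deduce Proposition~\ref{Prop:verylast} from the finite-dimensional matrix models by passing to the limit, exactly as the preceding discussion anticipates. Three facts enter. First, Theorem~\ref{th:weakconv}: the noncommutative distribution of $\g_t$ is the $N\to\infty$ limit of that of $\G_t$. Second, its right-invariant analogue stated just above: the noncommutative distribution of $\tilde{\g}_t$ is the $N\to\infty$ limit of that of $\widetilde{\G}_t$. Third, Proposition~\ref{samelaw}: for every $N$ and every $t\geq 0$ the random tuples $\G_t$ and $\widetilde{\G}_t$ are equal in probability distribution.

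Granting these, the proof is a one-line chain. Fix $t\geq 0$ and a noncommutative polynomial $P$ in $4p$ indeterminates. Since $\G_t$ and $\widetilde{\G}_t$ have the same law and inverses and adjoints are measurable functions of the matrix entries, we get
\[
\E\bigl[\tr_N\bigl(P(\G_t,\G_t^\ast,\G_t^{-1},\G_t^{-1,\ast})\bigr)\bigr]
=\E\bigl[\tr_N\bigl(P(\widetilde{\G}_t,\widetilde{\G}_t^\ast,\widetilde{\G}_t^{-1},\widetilde{\G}_t^{-1,\ast})\bigr)\bigr]
\qquad\text{for all }N.
\]
Letting $N\to\infty$ and applying the first fact to the left-hand side and the second to the right-hand side yields
\[
\varphi\!\left[P(\g_t,\g_t^\ast,\g_t^{-1},\g_t^{-1,\ast})\right]=\varphi\!\left[P(\tilde{\g}_t,\tilde{\g}_t^\ast,\tilde{\g}_t^{-1},\tilde{\g}_t^{-1,\ast})\right],
\]
which is exactly the asserted coincidence of noncommutative distributions.

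Hence the only thing that really needs to be written out is the second fact, which is not literally Theorem~\ref{th:weakconv} but follows by repeating Section~\ref{proofofweakconv} verbatim for the right-invariant process. Concretely: (a) redo the generator computation of Lemma~\ref{Generator} for $\widetilde{\G}_t$, the only change being that the quadratic covariations of $\widetilde{G}^{(\ell)}$ and of its inverse and adjoint replace those of Lemma~\ref{Lemma:Calcul QuadraticCov} (they are obtained from the latter by the substitution $G\mapsto\widetilde{G}^{-1}$, $G^{-1}\mapsto\widetilde{G}$, $G^\ast\mapsto\widetilde{G}^{-1,\ast}$, $G^{-1,\ast}\mapsto\widetilde{G}^\ast$ recorded after that lemma), producing a generator $\Delta^{\mathrm{r}}+\frac{1}{N^2}\widetilde{\Delta}^{\mathrm{r}}$ that acts on pure trace polynomials and preserves each finite-dimensional space $E_d$; (b) run the Duhamel and exponentiation argument of Proposition~\ref{prop:convergence} to obtain $\E[P(\widetilde{\G}_t)]=(e^{t\Delta^{\mathrm{r}}}P)(1)+O(1/N^2)$; (c) carry out the free It\^o computation of Lemma~\ref{freeGenerator} starting from the right-invariant free SDE, with the covariation rules of Lemma~\ref{freequadcov}, to see that $\tilde{\g}_t$ has generator $\Delta^{\mathrm{r}}$ and hence $P(\tilde{\g}_t)=(e^{t\Delta^{\mathrm{r}}}P)(1)$; comparing (b) and (c) gives the second fact. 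I expect this transcription to be the main, and essentially only, obstacle, and it is a matter of careful bookkeeping rather than of new ideas; Proposition~\ref{samelaw} even provides a built-in consistency check, since it forces the two limiting evaluations $(e^{t\Delta^{\mathrm{r}}}P)(1)$ and $(e^{t\Delta}P)(1)$ to agree.
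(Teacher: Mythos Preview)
Your proof is correct and follows exactly the paper's own argument: pass to the large-$N$ limit on both sides of the equality in law from Proposition~\ref{samelaw}, using Theorem~\ref{th:weakconv} on the left and its right-invariant analogue (obtained by a straightforward adaptation of Section~\ref{proofofweakconv}) on the right. Your sketch of how to transcribe the generator computation to the right-invariant setting is in fact more detailed than what the paper provides.
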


\section*{Acknowledgement}
This project started during the meeting "Non-commutative Function Theory and Free Probability" at the MFO Oberwolfach Research Institute for Mathematics. The authors would like to thank the organizers for the invitation and for providing an inspiring research environment. This work was supported by the NYU Abu Dhabi Grant "Free Probability for Neural Networks and Stochastic Differential Equations." 
\bibliographystyle{plain}
\bibliography{ref}
\end{document}